\theoremstyle{plain}
\newtheorem{theorem}{Theorem} [section]
\newtheorem{lemma}[theorem]{Lemma}
\newtheorem{proposition}[theorem]{Proposition}
\theoremstyle{remark}
\newtheorem{remark}[theorem]{Remark}
\theoremstyle{definition}
\numberwithin{equation}{section}
\newcommand{\p}{\partial}
\newcommand{\og}{\omega}
\newcommand{\Og}{\Omega}
\newcommand{\ep}{\varepsilon}
\newcommand{\fl}[2]{\frac{#1}{#2}}
\newcommand{\nn}{\nonumber}
\newcommand{\be}{\begin{equation}}
\newcommand{\ee}{\end{equation}}
\newcommand{\ba}{\begin{array}}
\newcommand{\ea}{\end{array}}
\newcommand{\bea}{\begin{eqnarray}}
\newcommand{\eea}{\end{eqnarray}}
\newcommand{\beas}{\begin{eqnarray*}}
\newcommand{\eeas}{\end{eqnarray*}}
\newcommand{\bx}{{\bf x} }
\def\R{{\mathbb R}}% real numbers
\def\({\left(}
\def\){\right)}
\def\<{\left\langle}
  \def\>{\right\rangle}
\begin{document}

\title[Energy regularization for logNLS]{Error estimates of local energy regularization for the logarithmic Schr\"odinger equation}
  \author[W. Bao]{Weizhu Bao}
 \address{Department of Mathematics,
National University of Singapore, Singapore 119076}
\email{matbaowz@nus.edu.sg}

\author[R. Carles]{R\'{e}mi Carles}
\address{Univ Rennes, CNRS, IRMAR - UMR 6625, F-35000 Rennes, France}
\email{ Remi.Carles@math.cnrs.fr}

\author[C. Su]{Chunmei Su}
\address{Yau Mathematical Sciences Center, Tsinghua University, Beijing 100084, China}
\email{sucm@tsinghua.edu.cn}

\author[Q. Tang]{Qinglin Tang}
\address{School of Mathematics, State Key Laboratory of Hydraulics and
  Mountain River Engineering, Sichuan University, Chengdu 610064,
  People's Republic of China}
\email{qinglin\_tang@scu.edu.cn}

\begin{abstract}
The logarithmic nonlinearity has been used in many partial differential equations (PDEs) for modeling problems in various applications.
Due to the singularity of the logarithmic function, it introduces
tremendous difficulties in establishing mathematical theories, as well as
in designing and analyzing numerical methods for PDEs with such nonlinearity. Here we take the logarithmic Schr\"odinger equation (LogSE)
as a prototype model. Instead of regularizing $f(\rho)=\ln \rho$ in the
LogSE directly and globally as being done in the literature, we propose a local energy regularization (LER) for the LogSE by
first regularizing $F(\rho)=\rho\ln \rho -\rho$ locally near $\rho=0^+$ with a polynomial approximation in the energy functional of the LogSE and then obtaining an energy regularized logarithmic Schr\"odinger equation (ERLogSE) via energy variation. Linear convergence is established between the solutions of ERLogSE and LogSE in terms of a small regularization parameter $0<\ep\ll1$. Moreover, the conserved energy of the ERLogSE converges to that of LogSE quadratically, which significantly improves
the linear convergence rate of the regularization method in the literature. Error estimates are also
presented for solving the ERLogSE by using Lie-Trotter splitting
integrators. Numerical results are reported to confirm our error
estimates of the LER and of the time-splitting integrators for the
ERLogSE. Finally our results suggest that the LER performs better than regularizing the logarithmic nonlinearity in the LogSE directly.
\end{abstract}

\keywords{Logarithmic Schr\"odinger equation; logarithmic nonlinearity; energy regularization; error estimates; convergence rate; Lie-Trotter splitting.}

\subjclass{22E46, 53C35, 57S235Q40, 35Q55, 65M15, 81Q05}

\thanks{This work was partially supported by the Ministry
of Education of Singapore grant R-146-000-296-112
(MOE2019-T2-1-063) (W. Bao), Rennes M\'etropole through its AIS
program (R. Carles), the Alexander von Humboldt 
Foundation (C. Su), the Institutional Research Fund from Sichuan
University (No. 2020SCUNL110) and the National Natural Science
Foundation of China (No. 11971335) (Q. Tang).} 

\maketitle

\section{Introduction}
The logarithmic nonlinearity appears in physical models from many
fields. For example, the logarithmic nonlinearity is introduced in quantum mechanics or quantum optics, where a
logarithmic Schr\"odinger equation (LogSE) is considered
(e.g. \cite{BiMy76, BiMy79, buljan, KEB00}),
\[
  i\p_t u=-\Delta u+\lambda\, u\ln |u|^2,\quad \lambda \in \R;
  \]
in oceanography and in fluid
dynamics, with a logarithmic Korteweg-de Vries (KdV) equation or a
logarithmic Kadomtsev-Petviashvili (KP) equation (e.g. \cite{wazwaz2014,
  wazwaz2016, james2014}); in quantum field theory and in inflation
cosmology, via a logarithmic Klein-Gordon equation (e.g. \cite{rosen1969,
  bartkowski2008, gorka2009}); or in material sciences, by the
introduction of a Cahn-Hilliard (CH) equation with
logarithmic potentials (e.g. \cite{cherfils2011, gilardi2009, elliott1996}). Recently, the heat equation with a logarithmic nonlinearity has been investigated mathematically \cite{chen2015, alfaro2017}.

In the context of quantum mechanics, the logarithmic nonlinearity was
selected by assuming the separability of noninteracting subsystems
property (cf. \cite{BiMy76}). This means that a solution of the nonlinear equation for the whole system can be constructed, as in the linear theory, by taking the product of two arbitrary solutions of the nonlinear equations for the subsystems. In other words, no correlations are introduced for noninteracting subsystems. As for the physical reality, robust physical grounds have been found for the application of equations with logarithmic nonlinearity. For instance, it was found in the stochastic formulation of quantum mechanics \cite{lemos1983, nassar1985} that the logarithmic nonlinear term originates naturally from an internal stochastic force due to quantum fluctuations. Such kind of nonlinearity also
appears naturally in inflation cosmology and in supersymmetric field theories \cite{barrow1995, enqvist1998q}.

Remarkably enough for a nonlinear PDE, many explicit solutions are
available for the logarithmic
mechanics (see e.g.  \cite{BiMy76,koutvitsky2006}). For example, the logarithmic
KdV equation, the logarithmic KP equation, the logarithmic
Klein-Gordon equation give Gaussons: solitary wave solutions with
Gaussian shapes \cite{wazwaz2014, wazwaz2016}.
In the case of LogSE (see \cite{CaGa18,ferriere-p1}), or the heat
equation \cite{alfaro2017}, every initial Gaussian function evolves as
a Gaussian: solving the corresponding nonlinear PDE is
equivalent to solving ordinary differential equations (involving the
purely time dependent parameters of the Gaussian).  However we
emphasize that this is not so in
the case of, e.g.,  the logarithmic
KdV equation, the logarithmic KP equation, or the logarithmic
Klein-Gordon equation. This can be directly seen by trying to plug time
dependent Gaussian functions into these equations. Note that this
distinction between various PDEs regarding the propagation of
Gaussian functions is the same as at  the linear level.

The well-posedness of the Cauchy problem for logarithmic equations is
not trivial since the logarithmic nonlinearity is not locally
Lipschitz continuous, due to the singularity of the logarithm at the
origin. Existence was proved by compactness argument  based on
regularization of the nonlinearity, for the CH equation with a logarithmic
potential \cite{elliott1991} and the LogSE
\cite{cazenave1983}. Uniqueness is also a challenging question,
settled in the case of LogSE thanks to a surprising inequality
discovered in \cite{CaHa80}, recalled in Lemma~\ref{pre} below.

The singularity of the logarithmic
nonlinearity also makes it  very challenging to design and analyze numerical schemes. There have been extensive numerical works for the CH
equation with a logarithmic Flory Huggins energy potential
\cite{copetti1992, gokieli2003, jeong2016, jeong2017, yang2019,
  chen2019}. Specifically, a regularized energy functional was adopted
for the CH equation with a logarithmic free energy \cite{copetti1992,
  yang2019}. A regularization of the logarithmic nonlinearity was introduced and
analyzed in \cite{bao2018, bao2019error} in the case LogSE,  see also
\cite{li2019}.

In this paper, we  introduce and analyze numerical methods for
logarithmic equations via a local energy regularization. We consider the
LogSE as an example; the regularization can be extended to other
logarithmic equations. The LogSE which arises in a model of nonlinear
wave mechanics reads (cf. \cite{BiMy76}),
\be\label{LSE}
\left\{
\begin{aligned}
&i\p_t u(\bx,t)=-\Delta u(\bx,t)+\lambda\, u(\bx,t)\,f(|u(\bx,t)|^2),\quad \bx\in \Omega, \quad t>0,\\
& u(\bx,0)=u_0(\bx),\quad \bx\in \overline{\Omega},
\end{aligned}
\right.
\ee
where $t$ and $\bx\in \mathbb{R}^d$ ($d=1,2,3$) represent the temporal and spatial coordinates, respectively, $\lambda\in \mathbb{R}\backslash\{0\}$ measures the force of the nonlinear interaction,
$u:=u(\bx,t)\in\mathbb{C}$ is the dimensionless wave function, and
\be\label{frhoSE}
f(\rho)=\ln \rho, \qquad \rho>0, \qquad \hbox{with}\quad \rho=|u|^2.
\ee
The spatial domain is either $\Omega=\mathbb{R}^d$, or $\Omega\subset\mathbb{R}^d$ bounded with  Lipschitz continuous boundary; in
the latter case the equation is subject to homogeneous Dirichlet or periodic boundary
conditions.
This model has been widely applied in quantum mechanics, nuclear physics,
geophysics, open quantum systems and Bose-Einstein condensation, see e.g.
\cite{Hef85, yasue, HeRe80, de2003, BEC}.  We choose to consider positive time
only merely to simplify the presentation, since \eqref{LSE} is time reversible. Formally, the flow of
\eqref{LSE} enjoys two important conservations. The {\sl mass}, defined as
\be\label{massSE}
N(t):=N(u(\cdot,t))=\|u\|^2=\int_\Omega |u(\bx,t)|^2d\bx
\equiv N(u_0), \qquad t\ge0,
\ee
and the {\sl energy}, defined as
\be\label{conserv}
\begin{split}
E(t):&=E(u(\cdot,t))=\int_\Omega\left[|\nabla u(\bx,t)|^2d\bx+\lambda F(|u(\bx,t)|^2)\right]d\bx\\
&\equiv\int_\Omega\left[|\nabla u_0(\bx)|^2+\lambda F(|u_0(\bx)|^2)\right]d\bx=E(u_0), \qquad t\ge0,
\end{split}
\ee
where
\be \label{Frho345}
F(\rho)=\int_0^\rho f(s)ds=\int_0^\rho \ln s\, ds=\rho\,\ln \rho-\rho, \qquad \rho\ge0.
\ee
The total angular momentum is also conserved, an identity that we do
not use in the present paper.
For the Cauchy problem \eqref{LSE} in a suitable
functional framework, we refer to \cite{CaHa80, CaGa18,GLN10}. For stability properties of standing waves for \eqref{LSE}, we refer to
\cite{cazenave1982, cazenave1983, Ar16}. For the analysis of breathers
and the existence of multisolitons, see
\cite{ferriere-p1,ferriere-p2}.

In order to avoid numerical blow-up of the logarithmic nonlinearity at
the origin, two models of regularized logarithmic Schr\"odinger equation (RLogSE)
were proposed in \cite{bao2019error}, involving a direct
regularization of $f$ in \eqref{frhoSE}, relying on a small regularized parameter $0<\ep\ll1$,
\be\label{RLSE0}
\left\{
\begin{aligned}
&i\p_t u^\ep(\bx,t)=-\Delta u^\ep(\bx,t)+\lambda \, u^\ep(\bx,t)\,\widetilde{f}^\ep(|u^\ep(\bx,t)|)^2),\quad \bx\in \Omega, \quad t>0,\\
&u^\ep(\bx,0)=u_0(\bx),\quad \bx\in \overline{\Omega},
\end{aligned}
\right.
\ee
and
\be\label{RLSE1}
\left\{
\begin{aligned}
&i\p_t u^\ep(\bx,t)=-\Delta u^\ep(\bx,t)+\lambda \, u^\ep(\bx,t)\,\widehat{f}^\ep(|u^\ep(\bx,t)|^2)),\quad \bx\in \Omega, \quad t>0,\\
&u^\ep(\bx,0)=u_0(\bx),\quad \bx\in \overline{\Omega}.
\end{aligned}
\right.
\ee
Here, $\widetilde{f}^\ep(\rho)$ and $\widehat{f}^\ep(\rho)$
are two types of regularization for $f(\rho)$,  given by
\be \label{1str2}
\widetilde{f}^\ep(\rho)=2\ln (\ep+\sqrt{\rho}),\quad
\widehat{f}^\ep(\rho)=\ln(\ep^2+\rho),\quad \rho\ge0, \qquad
\hbox{with}\quad \rho=|u^\ep|^2.
\ee
Again, the RLogSEs \eqref{RLSE0} and \eqref{RLSE1} conserve
the mass \eqref{massSE} with $u=u^\ep$, as well as the {\sl energies}
\be\label{conserv1}
\widetilde{E}^\ep(t):=\widetilde{E}^\ep
(u^\ep(\cdot,t))=\int_\Omega\left[|\nabla u^\ep(\bx,t)|^2d\bx+\lambda
\widetilde{F}^\ep(|u^\ep(\bx,t)|^2)\right]d\bx
\equiv \widetilde{E}^\ep(u_0),
\ee
and
\be\label{conserv2}
\widehat{E}^\ep(t):=\widehat{E}^\ep
(u^\ep(\cdot,t))=\int_\Omega\left[|\nabla u^\ep(\bx,t)|^2d\bx+\lambda
\widehat{F}^\ep(|u^\ep(\bx,t)|^2)\right]d\bx
\equiv \widehat{E}^\ep(u_0),
\ee
respectively, with, for $\rho\ge 0$,
\be\label{1str}
\begin{split}
\widetilde{F}^\ep(\rho)&=\int_0^\rho \widetilde{f}^\ep(s)ds
=2\rho\ln(\ep+\sqrt{\rho})+2\ep\sqrt{\rho}-\rho-2
\ep^2\ln(1+\sqrt{\rho}/\ep),\\
\widehat{F}^\ep(\rho)&=\int_0^\rho \widehat{f}^\ep(s)ds=(\ep^2+\rho)\ln(\ep^2+\rho)-\rho-2\ep^2\ln \ep.
\end{split}
\ee
The idea of this regularization is that the function $\rho\mapsto \ln \rho$ causes no (analytical or numerical) problem for large values of $\rho$,  but is singular at $\rho=0$.
A linear convergence was established between the solutions of the LogSE \eqref{LSE} and the regularized model \eqref{RLSE0} or \eqref{RLSE1}  for bounded $\Omega$ in terms of the small regularization parameter $0<\ep\ll1$, i.e.,
\[\sup_{t\in [0,T]}\|u^\ep(t) -u(t)\|_{L^2(\Omega)}=O(\ep),\quad \forall\
  T>0.\]
Applying this regularized model, a semi-implicit finite difference method (FDM) and a time-splitting method were proposed and analyzed
for the LogSE \eqref{RLSE0}  in \cite{bao2019error}  and \cite{bao2018}
 respectively. The above regularization saturates
the nonlinearity in the region $\{\rho<\ep^2\}$ (where
$\rho=|u^\ep|^2$), but of course has also some (smaller) effect in the
other region $\{\rho>\ep^2\}$, i.e., it regularizes $f(\rho)=\ln \rho$ globally.

Energy regularization is a method which has been adapted in different
fields
for dealing with singularity and/or roughness: in materials
science, for establishing the well-posedness of the Cauchy problem for the CH equation with a logarithmic potential \cite{elliott1991}, and
for treating strongly anisotropic surface energy \cite{Jiang,BaoJ}; in mathematical physics, for the well-posedness of
the LogSE \cite{cazenave1983}; in scientific computing, for designing
regularized numerical methods in the presence of singularities \cite{copetti1992, yang2019,BaoR}. The main goal of this paper is to present a local energy regularization ({\sl LER})
for the LogSE \eqref{LSE}. We
regularize the interaction energy density $F(\rho)$ only locally in the region
$\{\rho<\ep^2\}$  by a sequence of polynomials, and keep it unchanged in
$\{\rho>\ep^2\}$. The choice of the regularized interaction energy density $F_n^\ep$ is
prescribed by the regularity $n$ imposed at this step, involving the matching
conditions at $\{\rho=\ep^2\}$. We then
obtain a sequence of energy regularized logarithmic Schr\"odinger
equations (ERLogSEs), from the regularized energy functional density
$F_n^\ep$, via energy variation.
Unlike in \cite{copetti1992,yang2019}, where the interaction energy density $F(\rho)$ is approximated by a second order
polynomial near the origin, here we present a systematic way to regularize
the interaction energy density near the origin, i.e. locally,  by a sequence of polynomials such
that the order of regularity $n$  of the overall regularized interaction energy density is arbitrary.
We establish convergence rates between the solutions of ERLogSEs and LogSE in terms of the small regularized parameter $0<\ep\ll1$.
In addition, we also prove  error estimates of numerical approximations
of ERLogSEs by using time-splitting integrators.

The rest of this paper is organized as follows. In Section~\ref{sec:regul}, we
introduce a sequence of regularization $F_n^\ep$ for the logarithmic
potential. A regularized model is derived and analyzed in Section~\ref{sec:regulLSE}
via the LER of the LogSE. Some numerical methods are
proposed and analyzed in Section~\ref{sec:lie}. In
Section~\ref{sec:num}, we present numerical experiments. Throughout
the paper, we adopt the
standard $L^2$-based Sobolev spaces as well as the corresponding
norms, and denote by $C$ a generic positive constant independent of $\ep$, the
time step $\tau$ and the
function $u$, and by $C(c)$ a generic positive constant depending on $c$.

\section{Local regularization for $F(\rho)=\rho\,\ln \rho-\rho$}
\label{sec:regul}
We consider a local regularization starting from an approximation to the interaction energy density $F(\rho)$ in \eqref{Frho345} (and thus in \eqref{conserv}).
\subsection{A sequence of local regularization}
In order to make a comparison with the former global regularization
\eqref{RLSE0}, we again distinguish the regions
$\{\rho>\ep^2\}$ and $\{\rho<\ep^2\}$.  Instead of saturating the
nonlinearity in the second region, we regularize it locally as follows. For an
arbitrary integer $n\ge2$, we  approximate $F(\rho)$ by a piecewise
smooth function which is polynomial near the origin,
\be\label{Fn}
F^\ep_n(\rho)=F(\rho)\chi_{\{\rho\ge \ep^2\}}+P^\ep_{n+1}(\rho)\chi_{\{\rho<\ep^2\}}, \quad n\ge 2,
\ee
where $0<\ep\ll1$ is a small regularization parameter, $\chi_{_A}$ is the characteristic
function of the set $A$, and $P^\ep_{n+1}$ is a polynomial of degree
$n+1$. We demand $F^\ep_n \in C^n([0,+\infty))$ and
$F^\ep_n(0)=F(0)=0$ (this allows the regularized energy to be
well-defined on the whole space). The above conditions determine
$P_{n+1}^\ep$, as we now check. Since $P^\ep_{n+1}(0)=0$, write
\be\label{PQ}
P^\ep_{n+1}(\rho)=\rho\, Q_n^\ep(\rho),
\ee
with $Q^\ep_n$ a polynomial of degree $n$. Correspondingly, denote
$F(\rho)=\rho\, Q(\rho)$ with $Q(\rho)=\ln \rho-1$. The continuity
conditions read
\[P^\ep_{n+1}(\ep^2)=F(\ep^2), \quad (P^\ep_{n+1})'(\ep^2)=F'(\ep^2), \quad\ldots, \quad (P^\ep_{n+1})^{(n)}(\ep^2)=F^{(n)}(\ep^2),\]
which in turn yield
\[Q^\ep_{n}(\ep^2)=Q(\ep^2), \quad (Q^\ep_{n})'(\ep^2)=Q'(\ep^2),
  \quad\ldots, \quad (Q^\ep_{n})^{(n)}(\ep^2)=Q^{(n)}(\ep^2).\]
Thus $Q^\ep_n$ is nothing else but Taylor polynomial of $Q$ of degree
$n$ at $\rho=\ep^2$, i.e.,
\be\label{Qd}
Q^\ep_n(\rho)=Q(\ep^2)+\sum\limits_{k=1}^n  \fl{Q^{(k)}(\ep^2)}{k!}(\rho-\ep^2)^k=
\ln \ep^2-1 -\sum\limits_{k=1}^n \fl{1}{k}\left(1-\fl{\rho}{\ep^2}\right)^k.
\ee
In particular, Taylor's formula yields
\begin{equation}\label{eq:Taylor}
 Q(\rho)-  Q^\ep_n(\rho)=\int_{\ep^2}^\rho
 Q^{(n+1)}(s)\frac{(\rho-s)^n}{n!}ds = \int_{\ep^2}^\rho
   \frac{(s-\rho)^n}{s^{n+1}}ds  .
\end{equation}
Plugging \eqref{Qd} into \eqref{PQ},  we get the explicit formula of
$P_{n+1}^\ep(\rho)$. We emphasize a formula which will be convenient
for convergence results:
\begin{equation}
  \label{eq:Q'}
  \(Q^\ep_n\)'(\rho)= \frac{1}{\ep^2}\sum\limits_{k=1}^n
  \left(1-\fl{\rho}{\ep^2}\right)^{k-1}= \frac{1}{\rho}\(
  1-\(1-\fl{\rho}{\ep^2}\)^n\), \qquad 0\le \rho \le \ep^2.
\end{equation}

\subsection{Properties of the local regularization functions}
Differentiating \eqref{Fn} with respect to $\rho$ and noting
\eqref{PQ}, \eqref{Qd} and \eqref{eq:Q'}, we get
\be\label{fep}
f_n^\ep(\rho)=(F_n^\ep)'(\rho)=\ln \rho \,\chi_{\{\rho\ge \ep^2\}}+q^\ep_n(\rho)\chi_{\{\rho<\ep^2\}}, \qquad \rho\ge0,
\ee
where
\begin{align*}
q^\ep_n(\rho)&=(P_{n+1}^\ep)'(\rho)=Q_n^\ep(\rho)+\rho\, (Q_n^\ep)'(\rho)\\
&=\ln (\ep^2)-\frac{n+1}{n}\left(1-\fl{\rho}{\ep^2}\right)^n-\sum\limits_{k=1}^{n-1} \fl{1}{k}\left(1-\fl{\rho}{\ep^2}\right)^k.
\end{align*}
Noticing that $q^\ep_n$ is increasing in $[0, \ep^2]$, $\widetilde{f}^\ep$ and $\widehat{f}^\ep$ are increasing on $[0, \infty)$, thus all three types of regularization \eqref{Fn} and \eqref{1str} preserve the convexity of $F$. Moreover, as a sequence of local regularization (or approximation) for the semi-smooth function $F(\rho)\in C^0([0,\infty))\cap C^\infty((0,\infty))$, we have $F^\ep_n \in C^n([0,+\infty))$ for $n\ge2$,
while $\widetilde{F}^\ep \in C^1([0,\infty))\cap C^\infty((0,\infty))$ and $\widehat{F}^\ep \in C^\infty([0,\infty))$. Similarly, as a sequence of local regularization (or approximation) for the logarithmic function $f(\rho)=\ln \rho\in C^\infty((0,\infty))$, we observe that $f_n^\ep\in C^{n-1}([0, \infty))$ for $n\ge 2$, while $\widehat{f}^\ep\in C^\infty([0, \infty))$ and $\widetilde{f}^\ep \in C^0([0,\infty))\cap C^\infty((0,\infty))$.

Recall the following lemma, established initially in \cite[Lemma~1.1.1]{CaHa80}.
\begin{lemma}\label{pre}
For $z_1, z_2\in\mathbb{C}$, we have
\[\left|\mathrm{Im}\left(\(z_1\ln (|z_1|^2)-z_2\ln (|z_2|^2)\)
(\overline{z_1}-\overline{z_2})\right)\right|\le
  2|z_1-z_2|^2, \]
where $\mathrm{Im}(z)$ and $\overline{z}$ denote the imaginary part
and the complex conjugate of $z$, respectively.
\end{lemma}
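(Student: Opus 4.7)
The plan is to reduce the inequality to the product of two elementary one-variable estimates by first simplifying the imaginary part of the expression, exploiting the fact that $z\overline{z}\ln|z|^2 = |z|^2 \ln|z|^2$ is real and therefore drops out.

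More precisely, I would first expand
\[
 \bigl(z_1\ln|z_1|^2 - z_2\ln|z_2|^2\bigr)(\overline{z_1}-\overline{z_2})
 = |z_1|^2\ln|z_1|^2 + |z_2|^2\ln|z_2|^2 - z_1\overline{z_2}\ln|z_1|^2 - z_2\overline{z_1}\ln|z_2|^2.
\]
The first two terms are real, so taking imaginary parts and using $\mathrm{Im}(z_2\overline{z_1}) = -\mathrm{Im}(z_1\overline{z_2})$, one obtains the clean identity
\[
 \mathrm{Im}\bigl((z_1\ln|z_1|^2 - z_2\ln|z_2|^2)(\overline{z_1}-\overline{z_2})\bigr) = \bigl(\ln|z_2|^2 - \ln|z_1|^2\bigr)\mathrm{Im}(z_1\overline{z_2}).
\]
This identity is the crux of the argument: it turns the seemingly complex quantity into a product whose two factors can be estimated separately.

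Next, I would handle the degenerate cases: if $z_1 = 0$ or $z_2 = 0$, then $\mathrm{Im}(z_1\overline{z_2}) = 0$ and the inequality is trivial (with the convention $0\cdot\ln 0 = 0$). Otherwise, assume by symmetry that $|z_1| \ge |z_2| > 0$. The first factor is estimated by the elementary concavity bound
\[
 \bigl|\ln|z_1|^2 - \ln|z_2|^2\bigr| = 2\ln\frac{|z_1|}{|z_2|} \le 2\,\frac{|z_1|-|z_2|}{|z_2|} \le 2\,\frac{|z_1-z_2|}{|z_2|},
\]
using $\ln(1+x) \le x$ for $x\ge 0$ together with the reverse triangle inequality $|z_1| - |z_2| \le |z_1 - z_2|$. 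The second factor is estimated by rewriting $z_1\overline{z_2} = (z_1 - z_2)\overline{z_2} + |z_2|^2$ and noting that $|z_2|^2$ is real, which gives
\[
 |\mathrm{Im}(z_1\overline{z_2})| = |\mathrm{Im}((z_1-z_2)\overline{z_2})| \le |z_1-z_2|\,|z_2|.
\]
Multiplying the two bounds, the factor $|z_2|$ cancels and one obtains the desired constant $2$.

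The only subtle point is the cancellation in the first step — recognizing that the diagonal terms $|z_j|^2\ln|z_j|^2$ are real is what makes the proof elementary, and this is also where the factor $2$ (rather than something worse) comes from. The rest is routine, but care must be taken with the case $z_2 = 0$ since $\ln|z_2|^2$ is singular there; one must argue directly from $|z_1\ln|z_1|^2\cdot\overline{z_1}| = |z_1|^2|\ln|z_1|^2|$ and check that this is bounded by $2|z_1|^2$ is \emph{not} true globally, so the correct reduction is to use continuity/limits and the fact that the imaginary part vanishes identically when one of the points is $0$.
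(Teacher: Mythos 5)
Your proof is correct, and it is essentially the same argument the paper relies on: the paper itself only cites Cazenave--Haraux for Lemma~\ref{pre}, but its proof of the analogous estimate \eqref{gl} for $f_n^\ep$ rests on exactly your key identity
$\mathrm{Im}\left[\left(z_1 f(|z_1|^2)-z_2 f(|z_2|^2)\right)(\overline{z_1}-\overline{z_2})\right]=\left(f(|z_2|^2)-f(|z_1|^2)\right)\mathrm{Im}(z_1\overline{z_2})$
combined with the bound $|\mathrm{Im}(z_1\overline{z_2})|\le \min\{|z_1|,|z_2|\}\,|z_1-z_2|$ and the Lipschitz-type estimate $|\ln|z_1|^2-\ln|z_2|^2|\le 2|z_1-z_2|/\min\{|z_1|,|z_2|\}$. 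Your closing remarks about the degenerate case are somewhat garbled but harmless, since the imaginary part indeed vanishes identically when $z_1=0$ or $z_2=0$ and the inequality is then trivial.
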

Next we highlight some properties of $f_n^\ep$.
\begin{lemma}
Let $n\ge 2$ and $\ep>0$. For $z_1$, $z_2\in\mathbb{C}$, we have
\begin{align}
&|f_n^\ep(|z_1|^2)-f_n^\ep(|z_2|^2)|\le\fl{4n|z_1-z_2|}{\max\{\ep,\min\{|z_1|, |z_2|\}\}},\label{fl}\\
&\left|\mathrm{Im}\left[\(z_1f_n^\ep(|z_1|^2)-z_2f_n^\ep(|z_2|^2)\)
(\overline{z_1}-\overline{z_2})\right]\right|\le
  4n|z_1-z_2|^2,\label{gl}\\
&|\rho (f_n^\ep)'(\rho)|\le 3,\quad |\sqrt{\rho} (f_n^\ep)'(\rho)|\le \fl{2n}{\ep},\quad |\rho^{3/2} (f_n^\ep)''(\rho)|\le \fl{3n^2}{2\ep},\quad \rho\ge0,\label{fd}\\
&|f_n^\ep(\rho)|\le \max\{|\ln A|, 2+\ln(n\ep^{-2})\}, \quad \rho\in [0, A].\label{fb}
%&\left|z_1^2(f_n^\ep)'(|z_1|^2)-z_2^2(f_n^\ep)'(|z_2|^2)\right|\le \fl{C(n)}{\ep}|z_1-z_2|,\label{fd1}\\
%&\left||z_1|^2(f_n^\ep)'(|z_1|^2)-|z_2|^2(f_n^\ep)'(|z_2|^2)\right|\le \fl{C(n)}{\ep}|z_1-z_2|,\label{fd2}
  \end{align}
\end{lemma}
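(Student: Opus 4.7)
My plan is to treat the four estimates in turn, relying on the closed form $f_n^\ep(\rho)=\ln\rho$ for $\rho\ge\ep^2$ and, on $(0,\ep^2]$, the identity
\[
(f_n^\ep)'(\rho) = \frac{n+1}{\ep^2}u^{n-1} + \frac{1-u^{n-1}}{\rho}, \qquad u := 1-\rho/\ep^2 \in [0,1],
\]
obtained by differentiating $f_n^\ep = Q_n^\ep + \rho (Q_n^\ep)'$ via \eqref{eq:Q'}.

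For \eqref{fd}, the bounds reduce to elementary inequalities in $u \in [0,1]$. Writing $\rho (f_n^\ep)'(\rho) = (n+1)(1-u)u^{n-1} + (1 - u^{n-1})$, the first summand peaks at $u=(n-1)/n$ with value $\le (n+1)/n \le 3/2$ and the second is $\le 1$, giving the bound $3$. Using $\sqrt{\rho} = \ep\sqrt{1-u}$ together with the telescoping bound $(1-u^{n-1})/(1-u) \le n-1$ delivers $\sqrt{\rho}(f_n^\ep)' \le 2n/\ep$; one further differentiation and the same tactic yield $|\rho^{3/2}(f_n^\ep)''| \le 3n^2/(2\ep)$. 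For \eqref{fb}, the explicit formula for $q_n^\ep$ gives $|f_n^\ep(\rho)| \le 2\ln(1/\ep) + (n+1)/n + H_{n-1} \le 2 + \ln(n\ep^{-2})$ on $[0,\ep^2]$ using $H_{n-1} \le 1 + \ln n$, while on $[\ep^2,A]$ one has $|\ln\rho|\le\max(|\ln A|,2\ln(1/\ep))$, both covered by the right-hand side.

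The core and hardest estimate is \eqref{fl}, where the kink of $f_n^\ep$ at $\rho=\ep^2$ forces a case analysis. I would combine the global bound $(f_n^\ep)'(\rho) \le 2n/\ep^2$ (clear from the $u$-formula above, noting $(1-u^{n-1})/\rho \le (n-1)/\ep^2$) with $(f_n^\ep)' = 1/\rho$ on $[\ep^2,\infty)$, and split on $m := \min(|z_1|,|z_2|)$ and $M := \max(|z_1|,|z_2|)$. If $m \ge \ep$, $f_n^\ep = \ln$ on the whole segment $[|z_2|^2, |z_1|^2]$, so $|f_n^\ep(|z_1|^2) - f_n^\ep(|z_2|^2)| = 2|\ln(|z_1|/|z_2|)| \le 2|z_1-z_2|/m$. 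If $M \le \ep$, the sup bound together with $||z_1|^2 - |z_2|^2| \le 2\ep|z_1-z_2|$ yields $4n|z_1-z_2|/\ep$. In the mixed case $m<\ep<M$, I split the integral at $\rho=\ep^2$ and bound each piece separately, using $\ep^2 - |z_2|^2 \le 2\ep(\ep - |z_2|) \le 2\ep|z_1-z_2|$ on the lower piece and $\ln(|z_1|^2/\ep^2) \le 2(|z_1|-\ep)/\ep \le 2|z_1-z_2|/\ep$ on the upper piece.

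Finally, \eqref{gl} drops out of \eqref{fl}: since $f_n^\ep$ is real-valued, a direct expansion of $(z_1 f_n^\ep(|z_1|^2) - z_2 f_n^\ep(|z_2|^2))(\overline{z_1}-\overline{z_2})$ shows
\[
\mathrm{Im}\bigl[(z_1 f_n^\ep(|z_1|^2) - z_2 f_n^\ep(|z_2|^2))(\overline{z_1}-\overline{z_2})\bigr] = \bigl(f_n^\ep(|z_2|^2) - f_n^\ep(|z_1|^2)\bigr)\,\mathrm{Im}(z_1\overline{z_2}),
\]
and the bound $|\mathrm{Im}(z_1\overline{z_2})| = |\mathrm{Im}((z_1-z_2)\overline{z_2})| \le m\,|z_1-z_2|$, combined with \eqref{fl}, gives
\[
|\mathrm{Im}[\cdots]|\le m|z_1-z_2|\cdot\frac{4n|z_1-z_2|}{\max(\ep,m)} \le 4n|z_1-z_2|^2,
\]
since $m \le \max(\ep,m)$.
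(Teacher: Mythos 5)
Your proposal is correct and follows the same overall architecture as the paper's proof: a three-way case analysis on the positions of $|z_1|,|z_2|$ relative to $\ep$ for \eqref{fl}; the algebraic identity reducing the imaginary part in \eqref{gl} to $\mathrm{Im}(z_1\overline{z_2})\bigl(f_n^\ep(|z_2|^2)-f_n^\ep(|z_1|^2)\bigr)$ combined with \eqref{fl}; and direct manipulation of the explicit derivative formulas for \eqref{fd} and \eqref{fb}. Two details are genuinely different and arguably cleaner. First, for $|\rho(f_n^\ep)'(\rho)|\le 3$ you maximize $(n+1)(1-u)u^{n-1}+(1-u^{n-1})$ over $u\in[0,1]$ by elementary means, whereas the paper computes $g'(\rho)$ and locates the critical point at $\rho=2\ep^2/(n+1)$; both give the constant $3$ with room to spare. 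Second, in the mixed case $\min\{|z_1|,|z_2|\}<\ep<\max\{|z_1|,|z_2|\}$ you split the radial integral at $\rho=\ep^2$, while the paper introduces the intersection point $z_3$ of the segment $[z_1,z_2]$ with the circle $|z|=\ep$ and applies the two pure cases to the pairs $(z_2,z_3)$ and $(z_1,z_3)$; your radial split avoids the geometric construction entirely. One caveat on that step: as literally written you bound the lower piece by $\frac{4n}{\ep}|z_1-z_2|$ and the upper piece by $\frac{2}{\ep}|z_1-z_2|$ \emph{separately}, and their sum is $\frac{4n+2}{\ep}|z_1-z_2|$, overshooting the stated constant $4n$. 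The repair is immediate and is exactly what the paper's $z_3$ device encodes: keep the radial increments and use $(\ep-|z_2|)+(|z_1|-\ep)=|z_1|-|z_2|\le |z_1-z_2|$ together with $2\le 4n$ \emph{before} converting each piece to $|z_1-z_2|$. A similar micro-slip occurs in \eqref{fb}: the crude bound $\sum_{k=1}^{n-1}\frac1k\le 1+\ln n$ leaves an extra $\frac1n$ over the stated constant; use instead $\frac{n+1}{n}+\sum_{k=1}^{n-1}\frac1k=1+\sum_{k=1}^{n}\frac1k\le 2+\ln n$, as the paper does. With these two one-line adjustments the argument is complete.
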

%where $C(n)>0$ is a general constant depending on $n$, but independent
%of $\ep\in (0,1]$. %already in the convention at the end of the introduction

\begin{proof}
When $|z_1|, |z_2|\ge\ep$, we have
\[
\left|f_n^\ep(|z_1|^2)-f_n^\ep(|z_2|^2)\right|=2\ln\Big(1+\fl{\left||z_1|-|z_2|\right|
}{\min\{|z_1|,|z_2|\}}\Big)\le \fl{2|z_1-z_2|}{\min\{|z_1|,|z_2|\}}.
\]
A direct calculation gives
\be\label{fp1}
(f_n^\ep)'(\rho)=\fl{1}{\rho}\chi_{\{\rho\ge \ep^2\}}+\left(\fl{n}{\ep^2}
\big(1-\fl{\rho}{\ep^2}\big)^{n-1}+
\fl{1}{\ep^2}\sum\limits_{k=0}^{n-1}\big(1-\fl{\rho}{\ep^2}\big)^{k}\right)\chi_{\{\rho< \ep^2\}}.
\ee
Thus when $|z_1|<|z_2|\le\ep$, we have
\begin{align*}
\left|f_n^\ep(|z_1|^2)-f_n^\ep(|z_2|^2)\right|&=
\int_{|z_1|^2}^{|z_2|^2} (f_n^\ep)'(\rho)d\rho\\
&=\frac{n}{\ep^2}\int_{|z_1|^2}^{|z_2|^2}
\big(1-\fl{\rho}{\ep^2}\big)^{n-1}d\rho+\frac{1}{\ep^2}\sum\limits_{k=0}^{n-1}
\int_{|z_1|^2}^{|z_2|^2}\big(1-\fl{\rho}{\ep^2}\big)^{k}d\rho\\
&\le \frac{n}{\ep^2}(|z_2|^2-|z_1|^2)+\frac{1}{\ep^2}\sum\limits_{k=0}^{n-1}(|z_2|^2-|z_1|^2)\\
&=\frac{2n}{\ep^2}(|z_2|^2-|z_1|^2)\le \frac{4n}{\ep}|z_1-z_2|.
\end{align*}
Another case when $|z_2|<|z_1|\le \ep$ can be established similarly.
Supposing, for example, $|z_2|<\ep< |z_1|$, denote by $z_3$ the intersection point of the circle $\{z\in \mathbb{C}: |z|=\ep\}$ and the line segment connecting $z_1$ and $z_2$. Combining the inequalities above, we have
\begin{align*}
\left|f_n^\ep(|z_1|^2)-f_n^\ep(|z_2|^2)\right|&\le |f_n^\ep(|z_2|^2)-f_n^\ep(|z_3|^2)|+
|\ln(|z_1|^2)-\ln(|z_3|^2)|\\
&\le \fl{4n}{\ep}|z_2-z_3|+
\fl{2}{\ep}|z_1-z_3|\\
&\le \fl{4n}{\ep}\left(|z_2-z_3|+|z_1-z_3|\right)=\fl{4n}{\ep}|z_1-z_2|,
\end{align*}
which completes the proof for \eqref{fl}.

Noticing that
\begin{align*}
&\mathrm{Im}\left[\(z_1f_n^\ep(|z_1|^2)-z_2f_n^\ep(|z_2|^2)\)
(\overline{z_1}-\overline{z_2})\right]\\
&\quad=-\mathrm{Im}(\overline{z_1}
z_2) f_n^\ep(|z_2|^2)-\mathrm{Im}(z_1\overline{z_2})f_n^\ep(|z_1|^2)\\
&\quad=\mathrm{Im}(\overline{z_1}
z_2)\left[f_n^\ep(|z_1|^2)- f_n^\ep(|z_2|^2)\right]\\
&\quad=\fl{1}{2i}(\overline{z_1}z_2-z_1\overline{z_2})
\left[f_n^\ep(|z_1|^2)-f_n^\ep(|z_2|^2)\right],
\end{align*}
and
\begin{align*}
&\left|\overline{z_1}z_2-z_1\overline{z_2}\right|=
\left|z_2(\overline{z_1}-\overline{z_2})+\overline{z_2}(z_2-z_1)\right|\le
 2|z_2|\,|z_1-z_2|,\\
 &\left|\overline{z_1}z_2-z_1\overline{z_2}\right|=
\left|\overline{z_1}(z_2-z_1)+z_1(\overline{z_1}-\overline{z_2})\right|\le
 2|z_1|\,|z_1-z_2|,
\end{align*}
which implies
\[\left|\overline{z_1}z_2-z_1\overline{z_2}\right|\le 2\min\{|z_1|, |z_2|\}\,|z_1-z_2|,\]
one can conclude \eqref{gl} by applying \eqref{fl}.

It follows from \eqref{fp1} that
\begin{align*}
g(\rho)&=\rho(f_n^\ep)'(\rho)=\chi_{\{\rho\ge \ep^2\}}+\left(\fl{n\rho}{\ep^2}
\big(1-\fl{\rho}{\ep^2}\big)^{n-1}+
\fl{\rho}{\ep^2}\sum\limits_{k=0}^{n-1}\big(1-\fl{\rho}{\ep^2}\big)^{k}\right)\chi_{\{\rho< \ep^2\}}\\
&=\chi_{\{\rho\ge \ep^2\}}+\left(\fl{n\rho}{\ep^2}
\big(1-\fl{\rho}{\ep^2}\big)^{n-1}+1-\big(1-\fl{\rho}{\ep^2}\big)^{n}
\right)\chi_{\{\rho< \ep^2\}},
\end{align*}
which gives that
\[g'(\rho)\chi_{\{\rho< \ep^2\}}=\frac{n}{\ep^2}\big(1-\fl{\rho}{\ep^2}\big)^{n-2}
\left[2-\frac{(n+1)\rho}{\ep^2}\right].\]
This leads to
\[\left|\rho(f_n^\ep)'(\rho)\right|=g(\rho)\le \max\{1, g\left(\frac{2\ep^2}{n+1}\right)\}\le 1+\frac{2n}{n+1}\le 3,\]
which completes the proof for the first inequality in \eqref{fd}. Finally it
follows from \eqref{fp1} that
\begin{align*}
&\sqrt{\rho}(f_n^\ep)'(\rho)=\fl{1}{\sqrt{\rho}}\chi_{\{\rho\ge \ep^2\}}+\fl{\sqrt{\rho}}{\ep^2}\left(n
\big(1-\fl{\rho}{\ep^2}\big)^{n-1}+
\sum\limits_{k=0}^{n-1}\big(1-\fl{\rho}{\ep^2}\big)^{k}\right)\chi_{\{\rho< \ep^2\}},\\
&(f_n^\ep)''(\rho)=-\fl{1}{\rho^2}\chi_{\{\rho\ge \ep^2\}}-\left(\fl{n^2-1}{\ep^4}\big(1-\fl{\rho}{\ep^2}\big)^{n-2}+\fl{1}{\ep^4}
\sum\limits_{k=0}^{n-3}(k+1)\big(1-\fl{\rho}{\ep^2}\big)^{k}\right)\chi_{\{\rho< \ep^2\}},
\end{align*}
which immediately yields that
\begin{align*}
&|\sqrt{\rho}(f_n^\ep)'(\rho)|\le \frac{2n}{\ep},\\
&|\rho^{3/2}(f_n^\ep)''(\rho)|\le \frac{1}{\ep}\left(n^2-1+
\sum\limits_{k=0}^{n-3}(k+1)\right)=\frac{3n(n-1)}{2\ep}<\frac{3n^2}{2\ep}.
\end{align*}

For $\rho\in [0, \ep^2]$, in view of $\ep\in (0, 1]$, one deduces
\begin{align*}
|f_n^\ep(\rho)|&\le \ln (\ep^{-2})+\frac{n+1}{n}\left(1-\fl{\rho}{\ep^2}\right)^n+\sum\limits_{k=1}^{n-1} \fl{1}{k}\left(1-\fl{\rho}{\ep^2}\right)^k\\
&\le \ln (\ep^{-2})+\frac{n+1}{n}+\sum\limits_{k=1}^{n-1} \fl{1}{k}\\
&\le \ln (\ep^{-2})+2+\sum\limits_{k=2}^{n} \fl{1}{k}\\
&\le 2+\ln (n\ep^{-2}),
\end{align*}
which together with $|f_n^\ep(\rho)|\le \max\{\ln(\ep^{-2}), |\ln(A)|\}$ when $\rho\in [\ep^2, A]$ concludes \eqref{fb}.
\end{proof}

\subsection{Comparison between different regularizations}
To compare different regularizations for $F(\rho)$ (and thus for $f(\rho)$),
Fig. ~\ref{Fcomp} shows $F_n^\ep$ ($n=2,4,100,500$), $\widetilde{F}^\ep$ and $\widehat{F}^\ep$ for different $\ep$,
from which we can see that the newly proposed local regularization $F_n^\ep$ approximates $F$ more accurately.

\begin{figure}[h!]
\begin{center}
\includegraphics[width=6cm,height=4cm]{./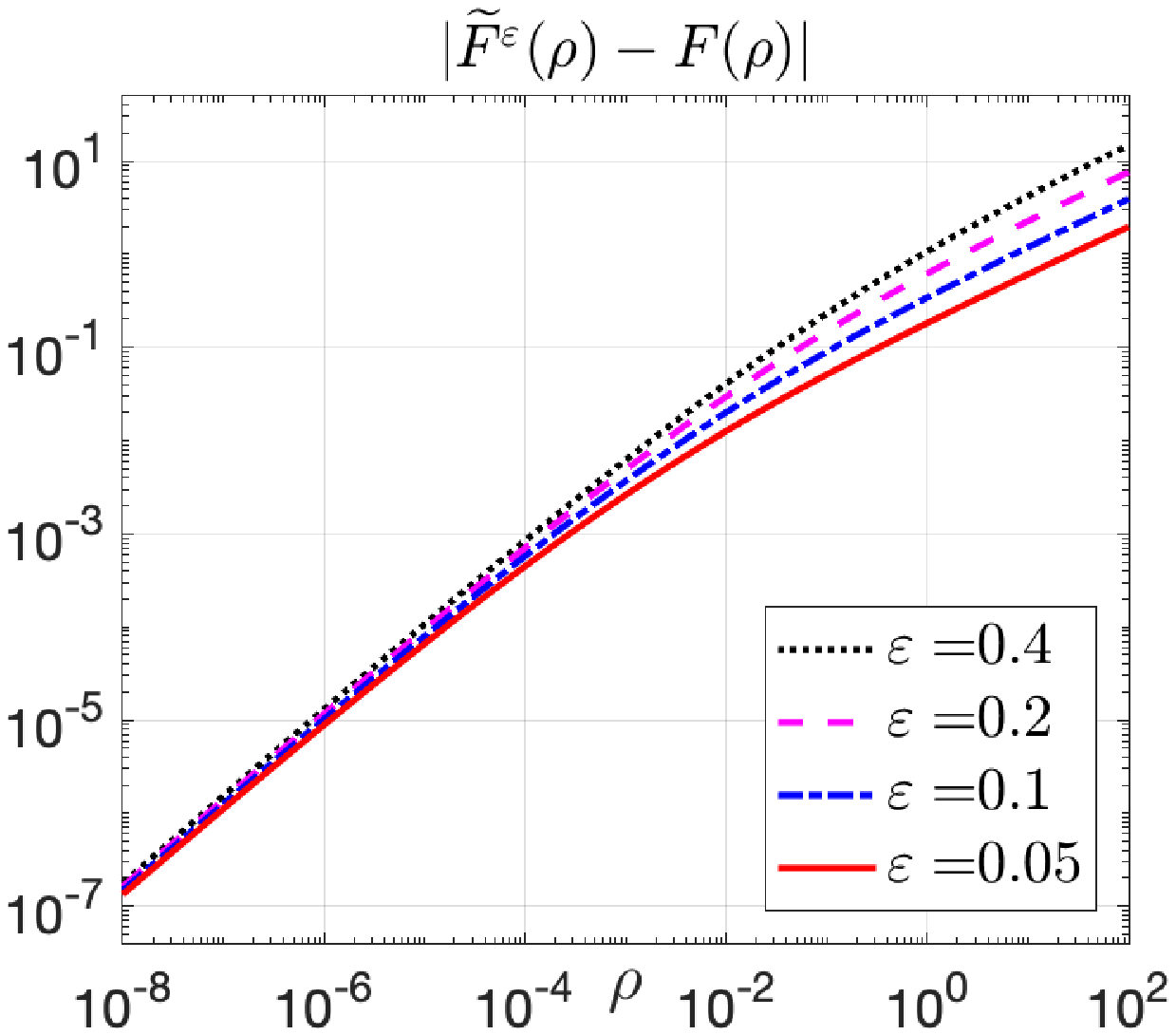}
\quad
\includegraphics[width=6cm,height=4cm]{./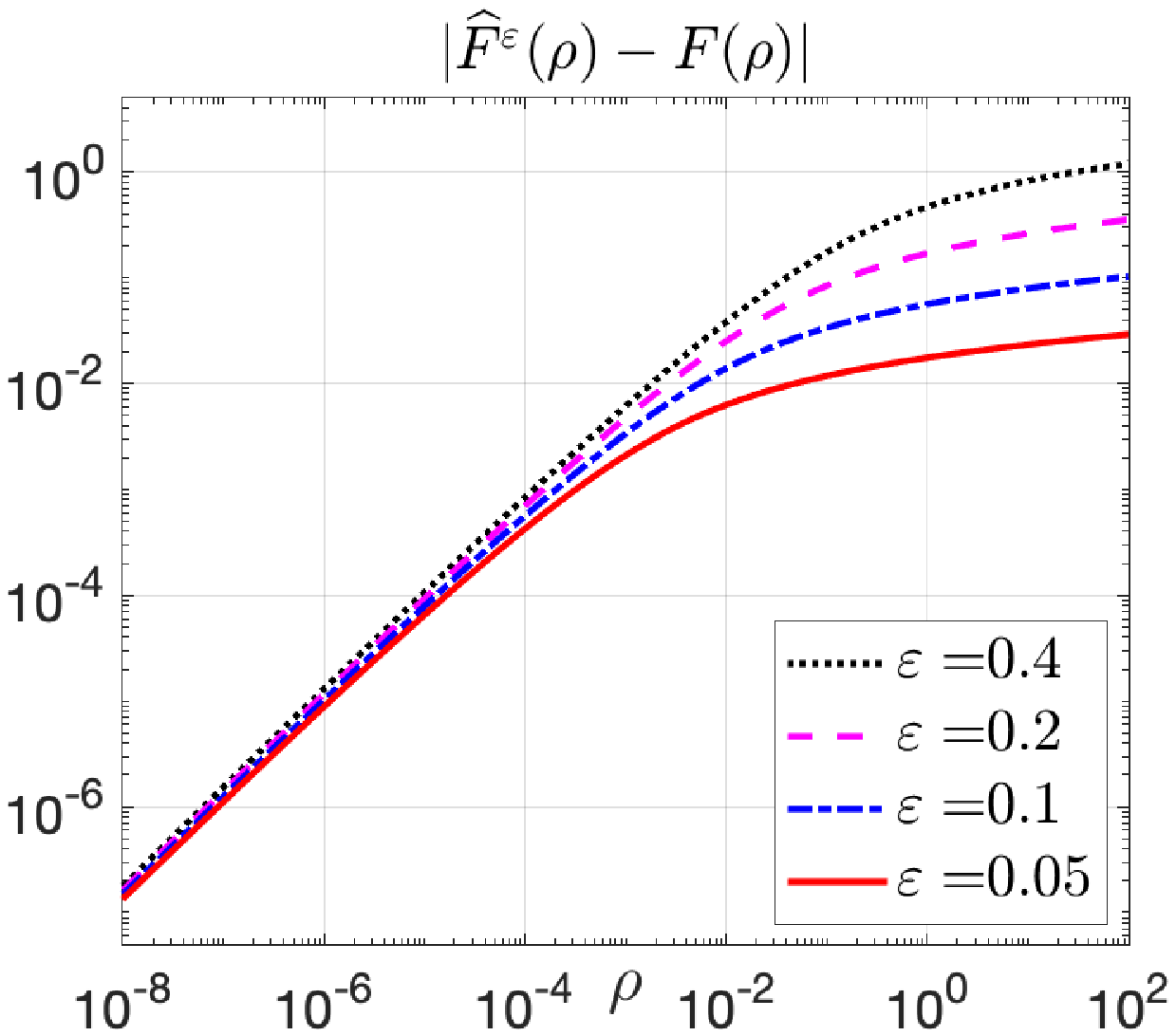}\\[1em]
\includegraphics[width=6cm,height=4cm]{./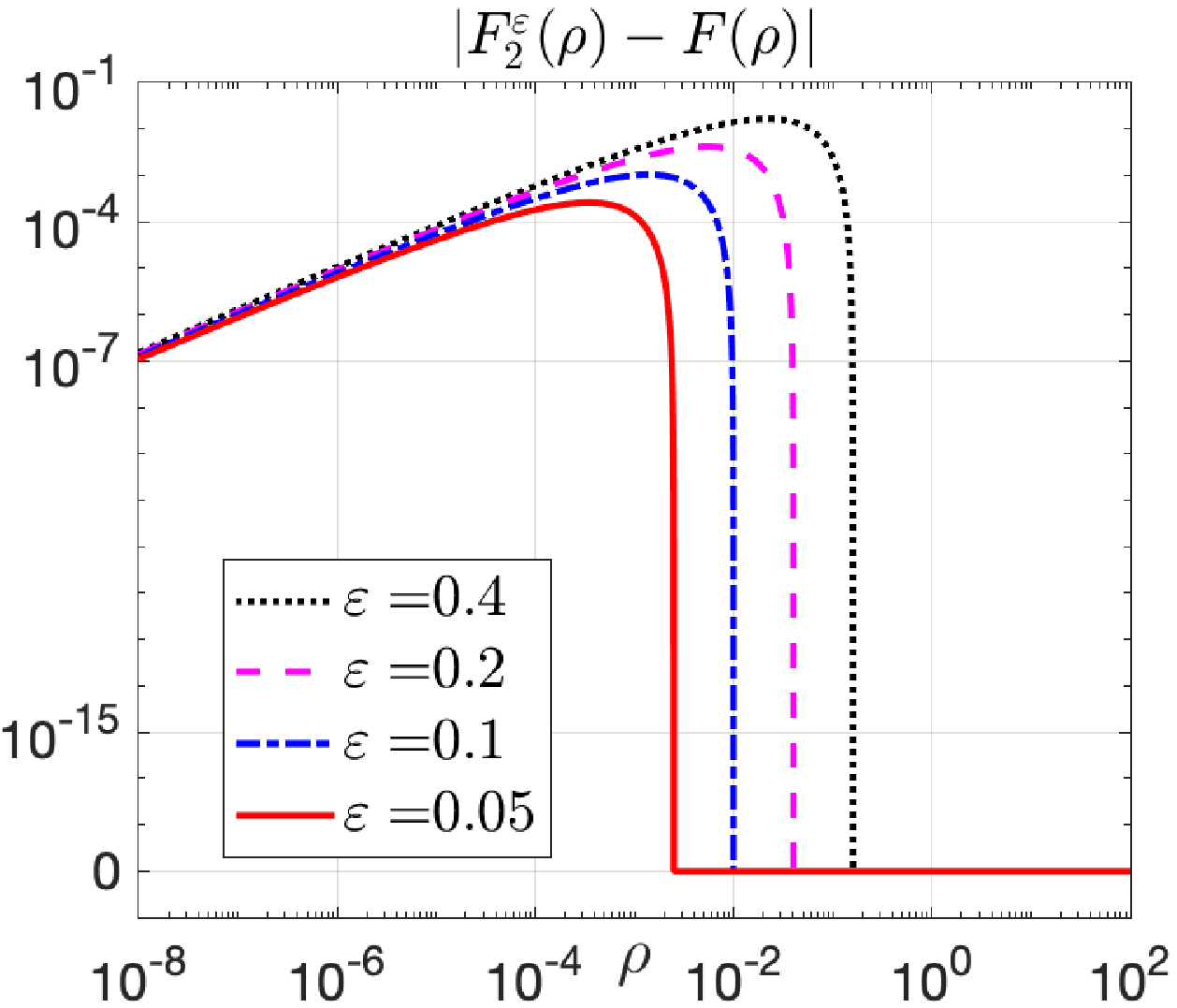}
\quad
\includegraphics[width=6cm,height=4cm]{./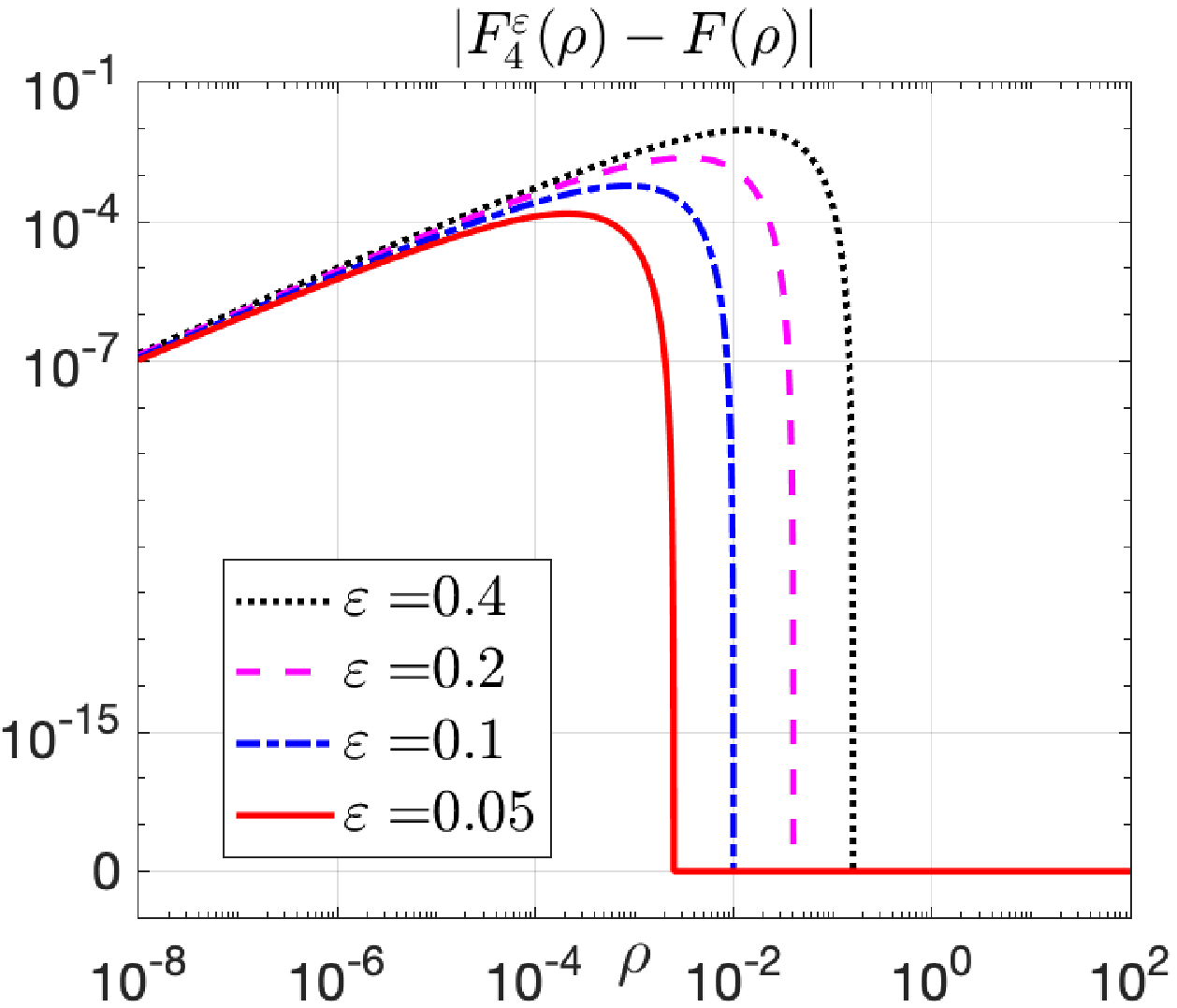}\\[1em]
\includegraphics[width=6cm,height=4cm]{./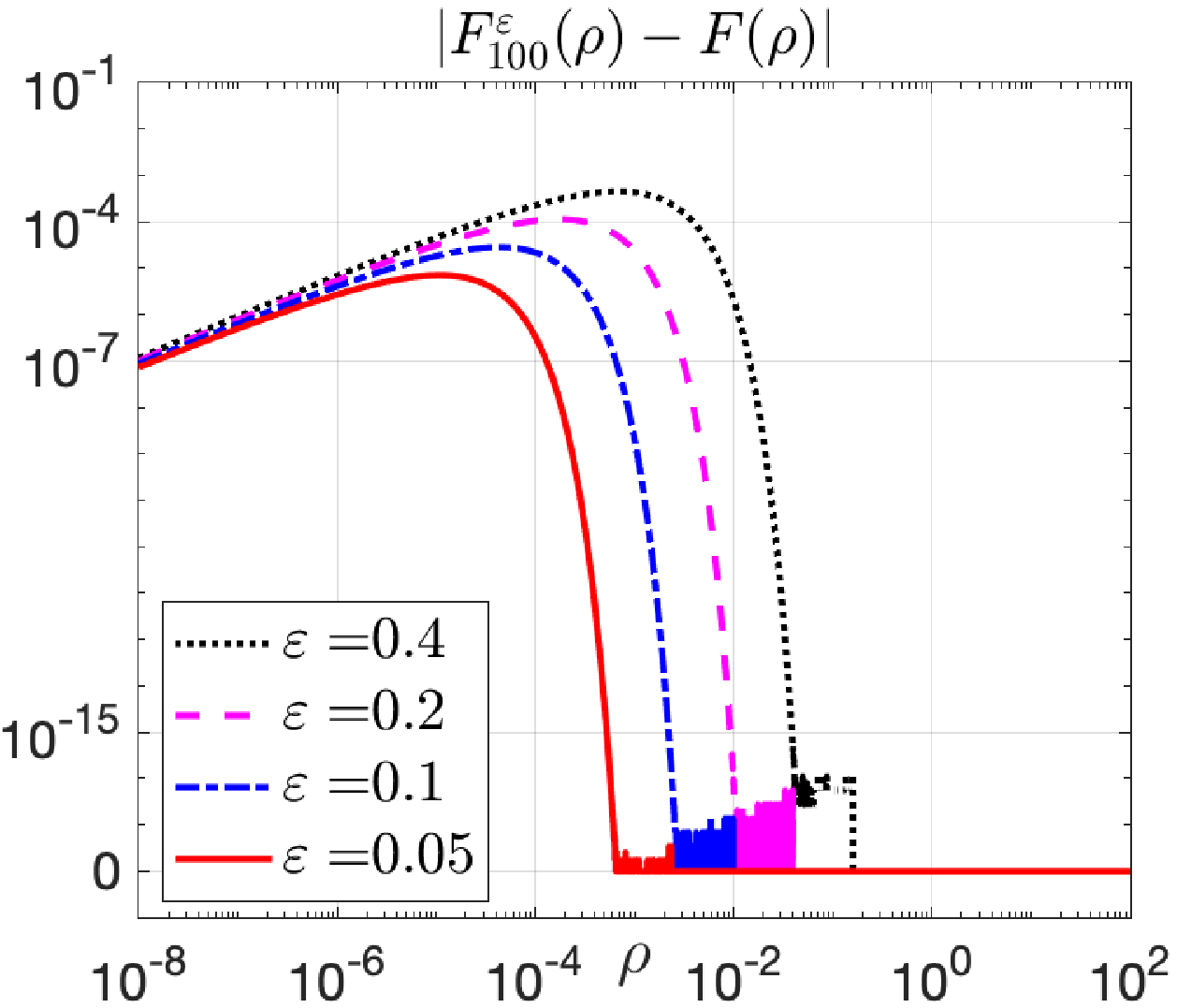}
\quad
\includegraphics[width=6cm,height=4cm]{./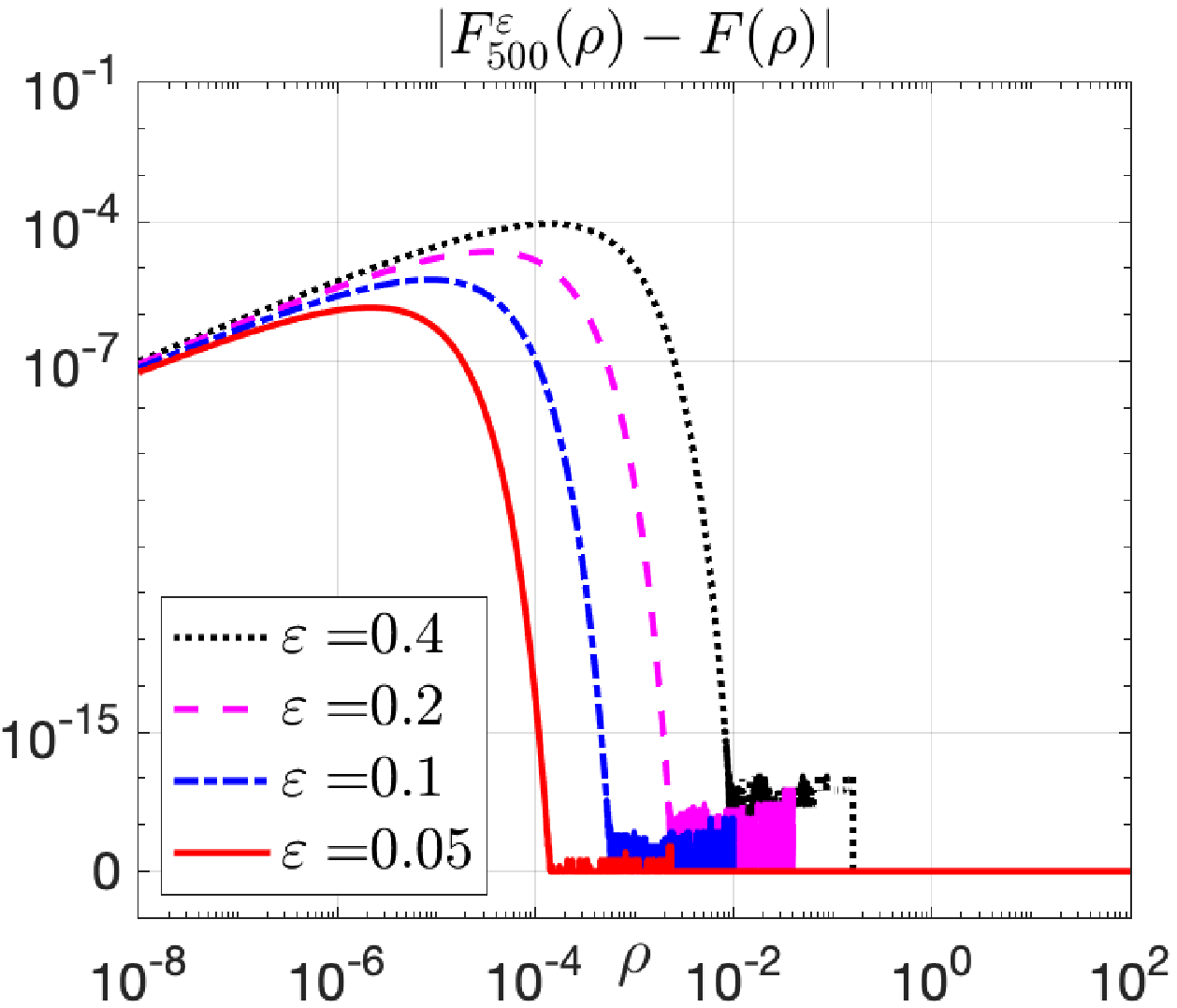}
\end{center}
 \caption{Comparison of different regularizations for $F(\rho)=\rho\ln\rho-\rho$. }
\label{Fcomp}
\end{figure}

Fig. \ref{fcomp} shows various regularizations $f_n^\ep$ ($n=2,4,100,500$), $\widetilde{f}^\ep$ and $\widehat{f}^\ep$
for various $\ep$, while Figs. \ref{fprime} \& \ref{fpp} show  their first- and second-order derivatives.
From these figures, we can see that the newly proposed local regularization $f_n^\ep$ (and its derivatives with larger $n$)
approximates the nonlinearity $f$ (and its derivatives) more accurately. In addition, Fig. \ref{fig:Revision_EnergyReguL_Conv_WRT_Order_N} depicts  $F_n^\varepsilon(\rho)$ (with $\varepsilon=0.1$) and its   derivatives for different $n$, from which we can clearly see the convergence of $F_n^\varepsilon(\rho)$ (and its derivatives)
to $F(\rho)$ (and its derivatives) W.R.T. order $n$.

\begin{figure}[htbp!]
\begin{center}
\includegraphics[width=6cm,height=4cm]{./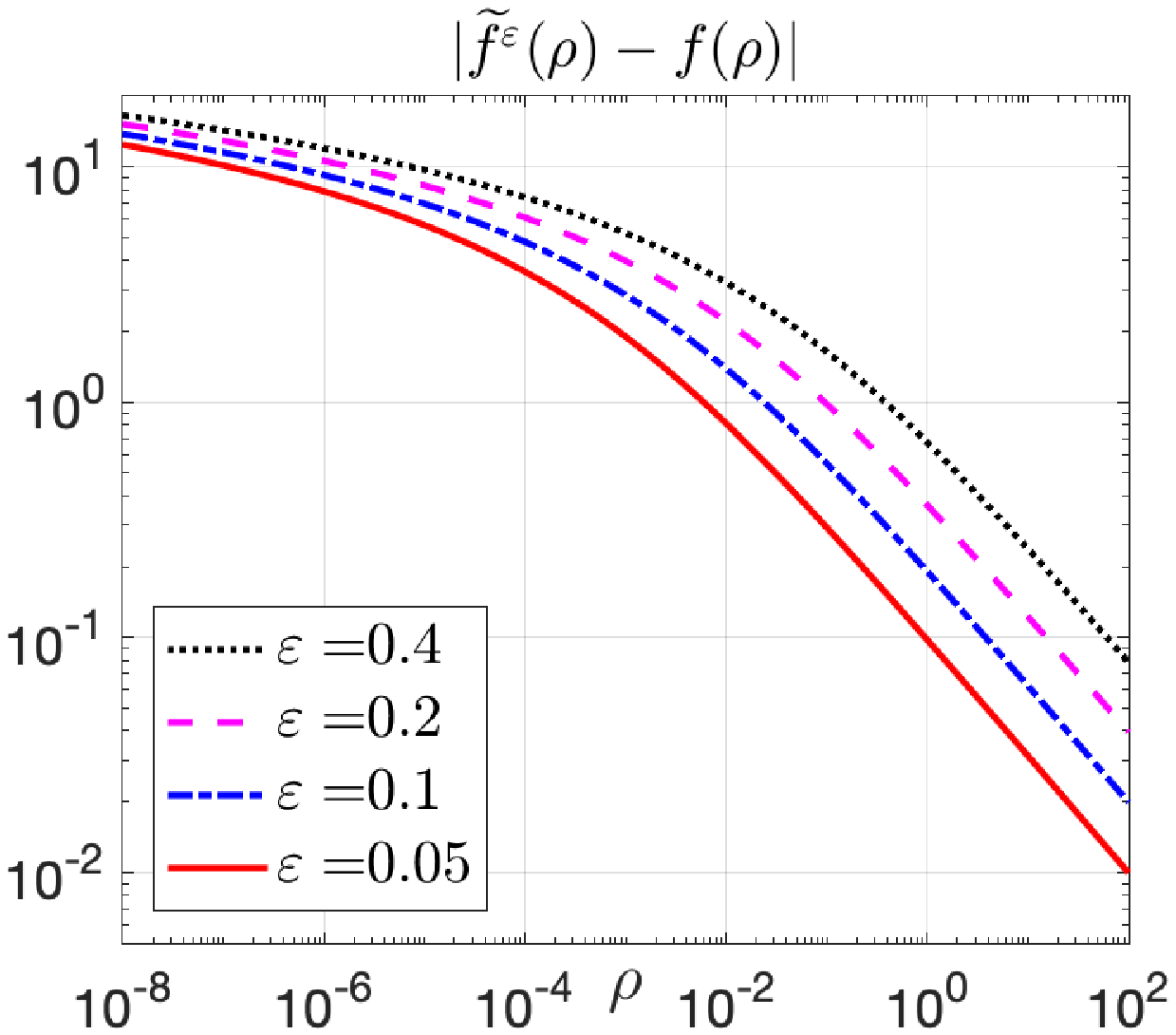}
\quad
\includegraphics[width=6cm,height=4cm]{./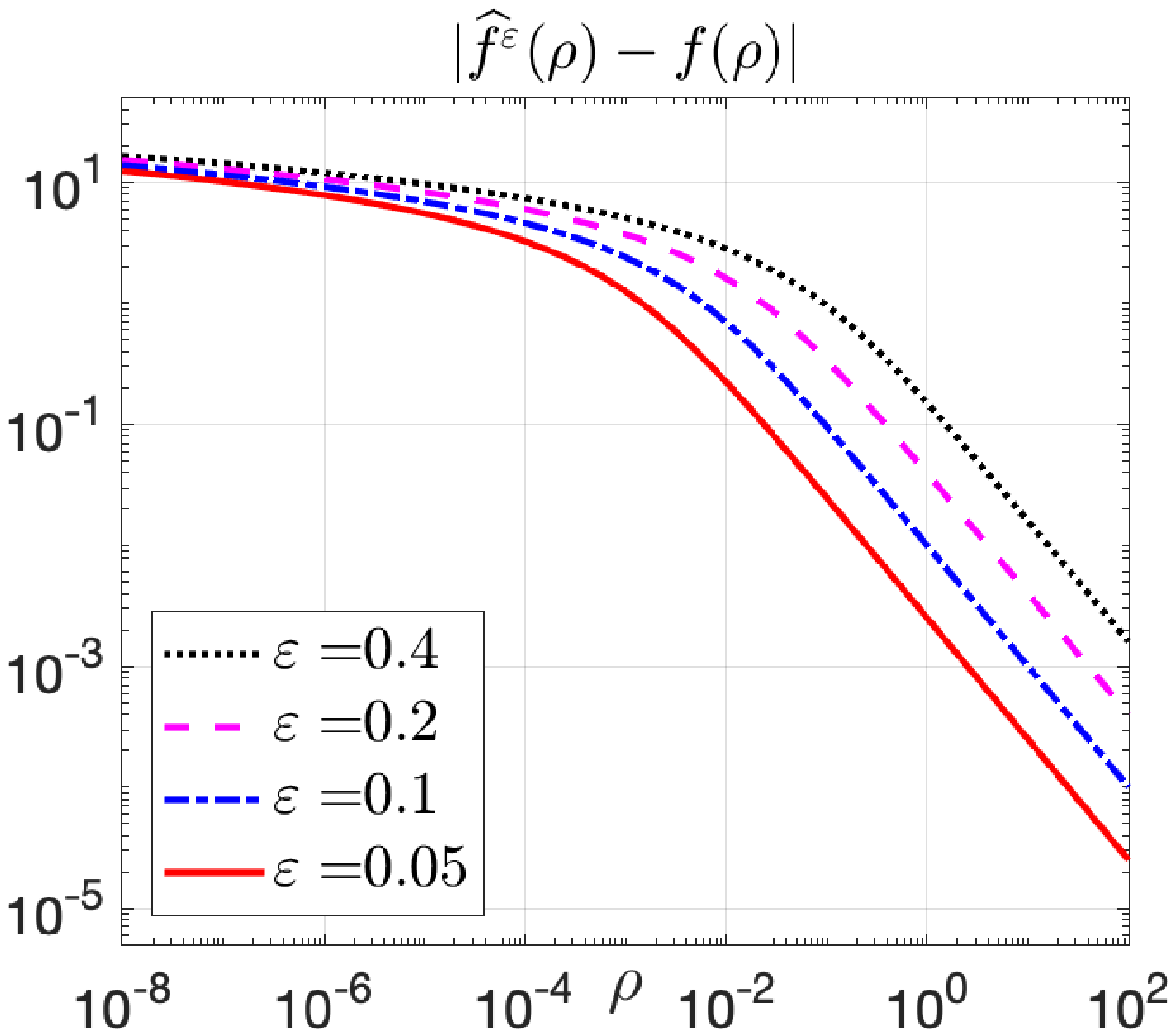}\\[1em]
\includegraphics[width=6cm,height=4cm]{./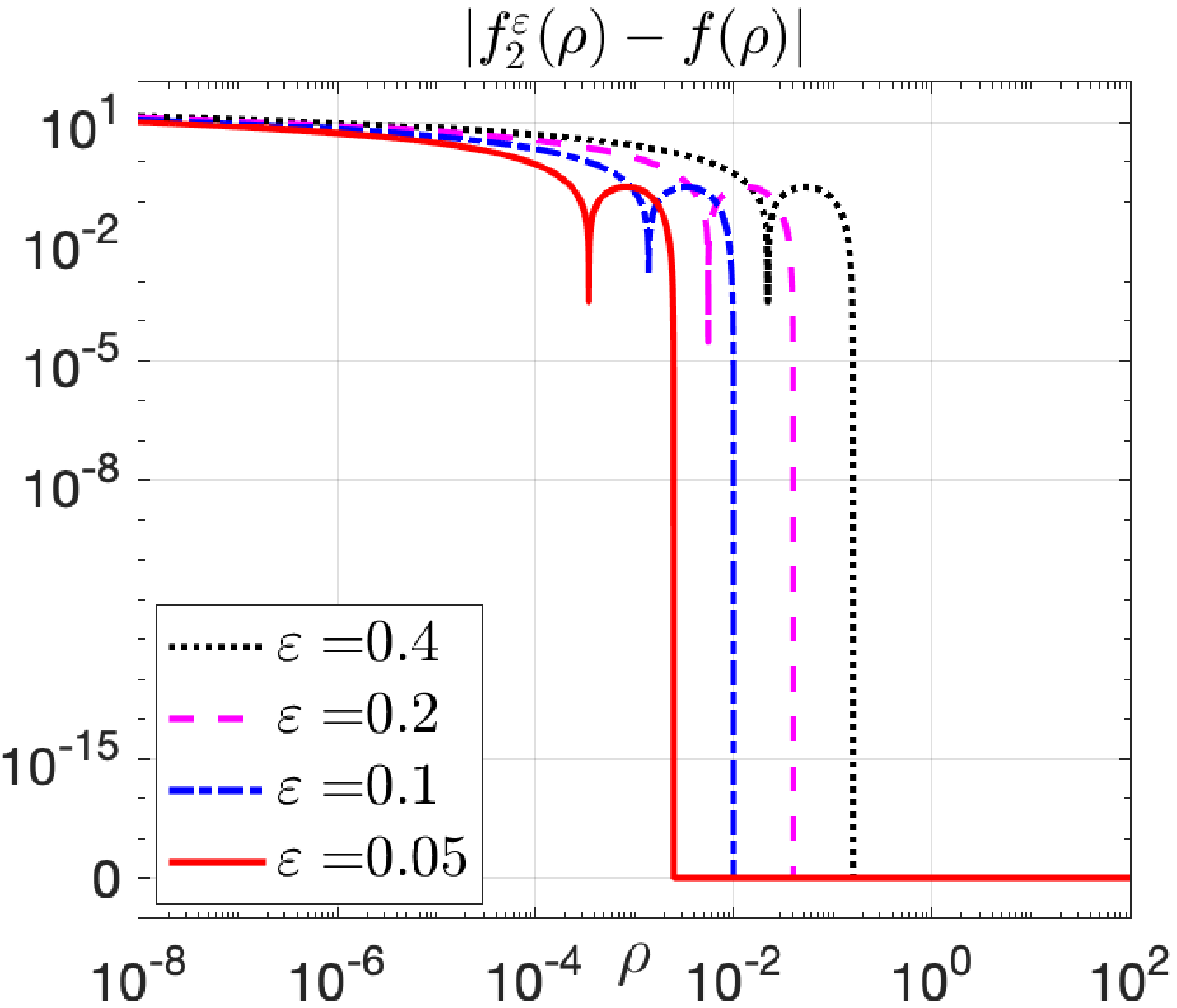}
\quad
\includegraphics[width=6cm,height=4cm]{./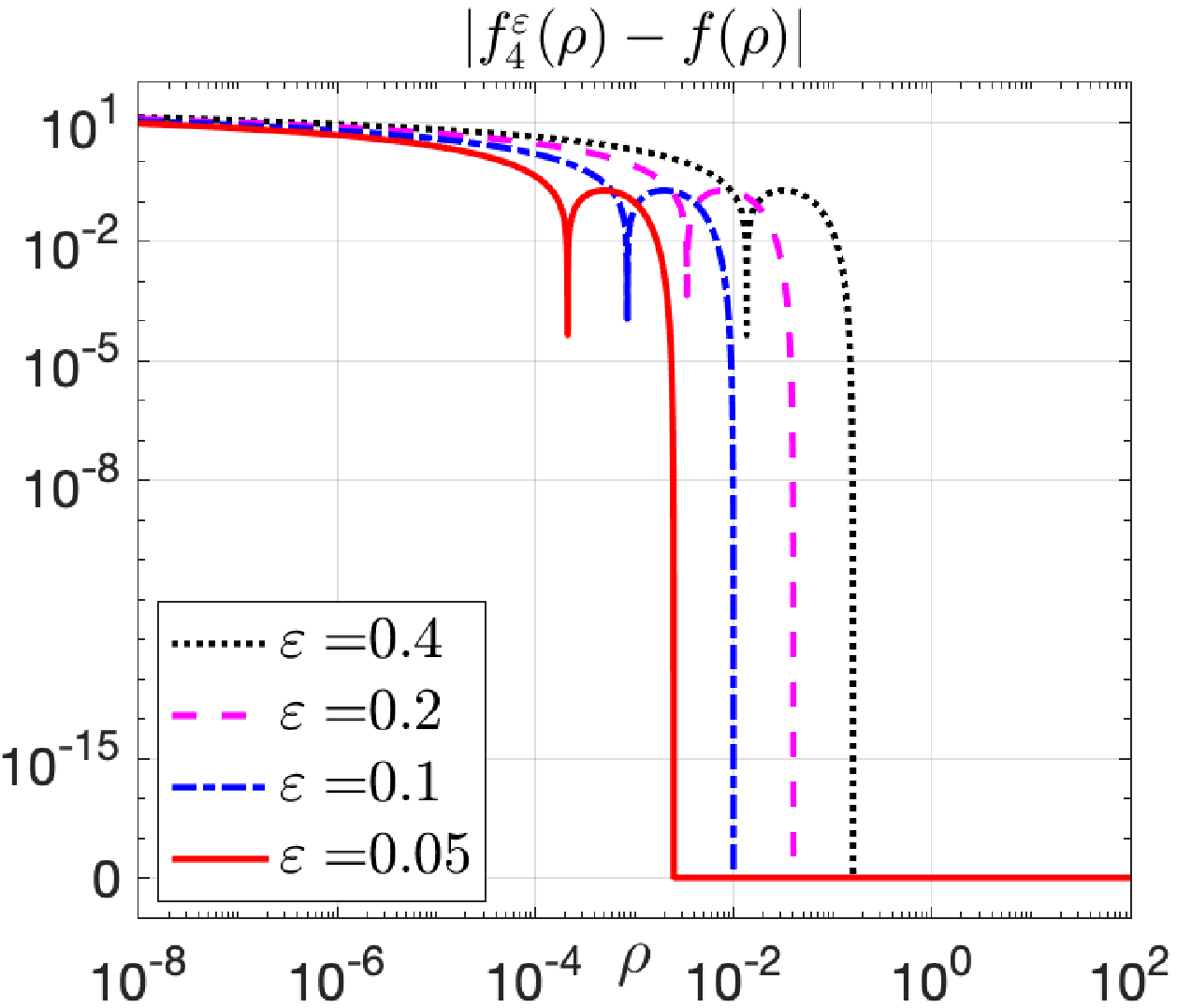}\\[1em]
\includegraphics[width=6cm,height=4cm]{./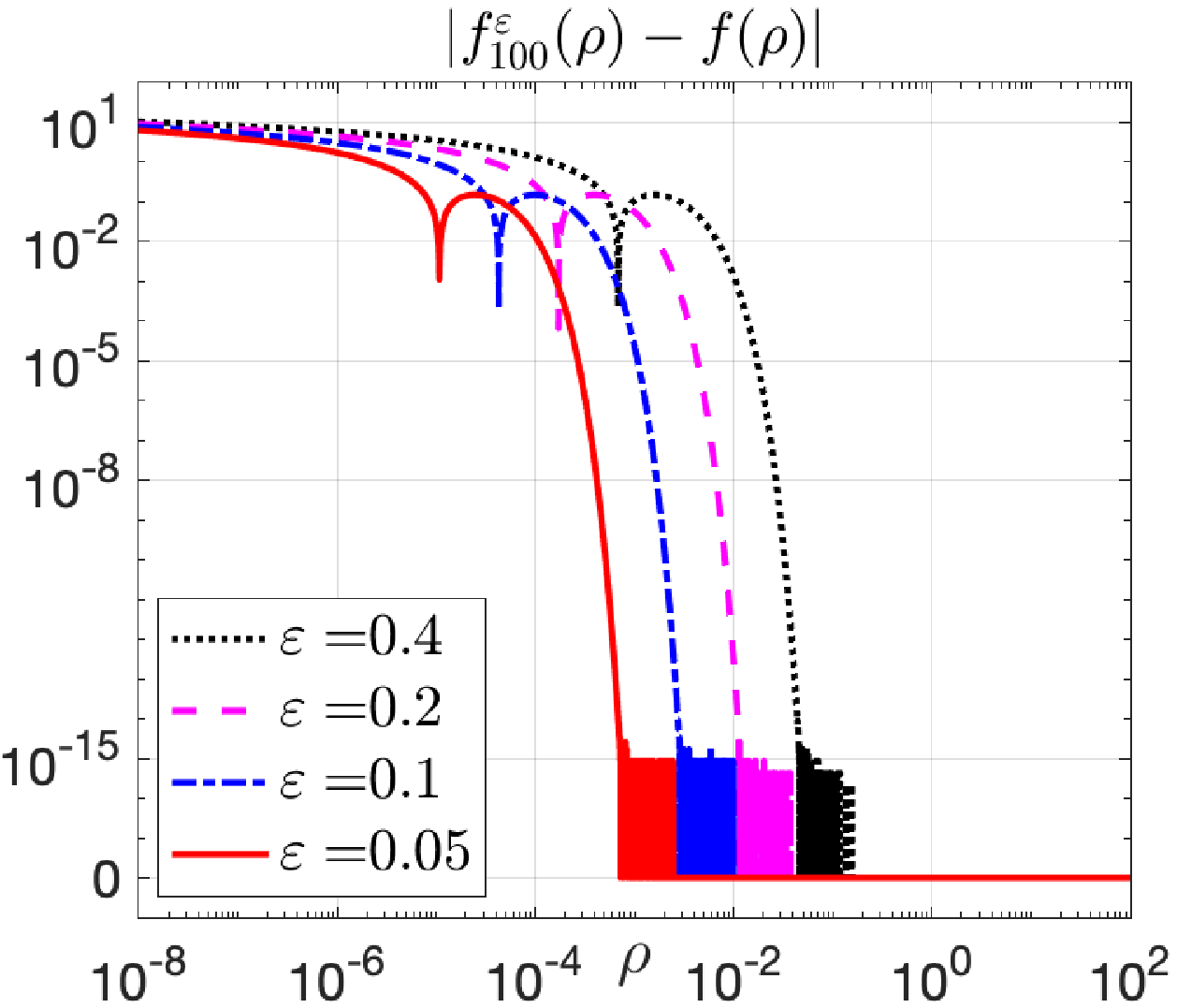}
\quad
\includegraphics[width=6cm,height=4cm]{./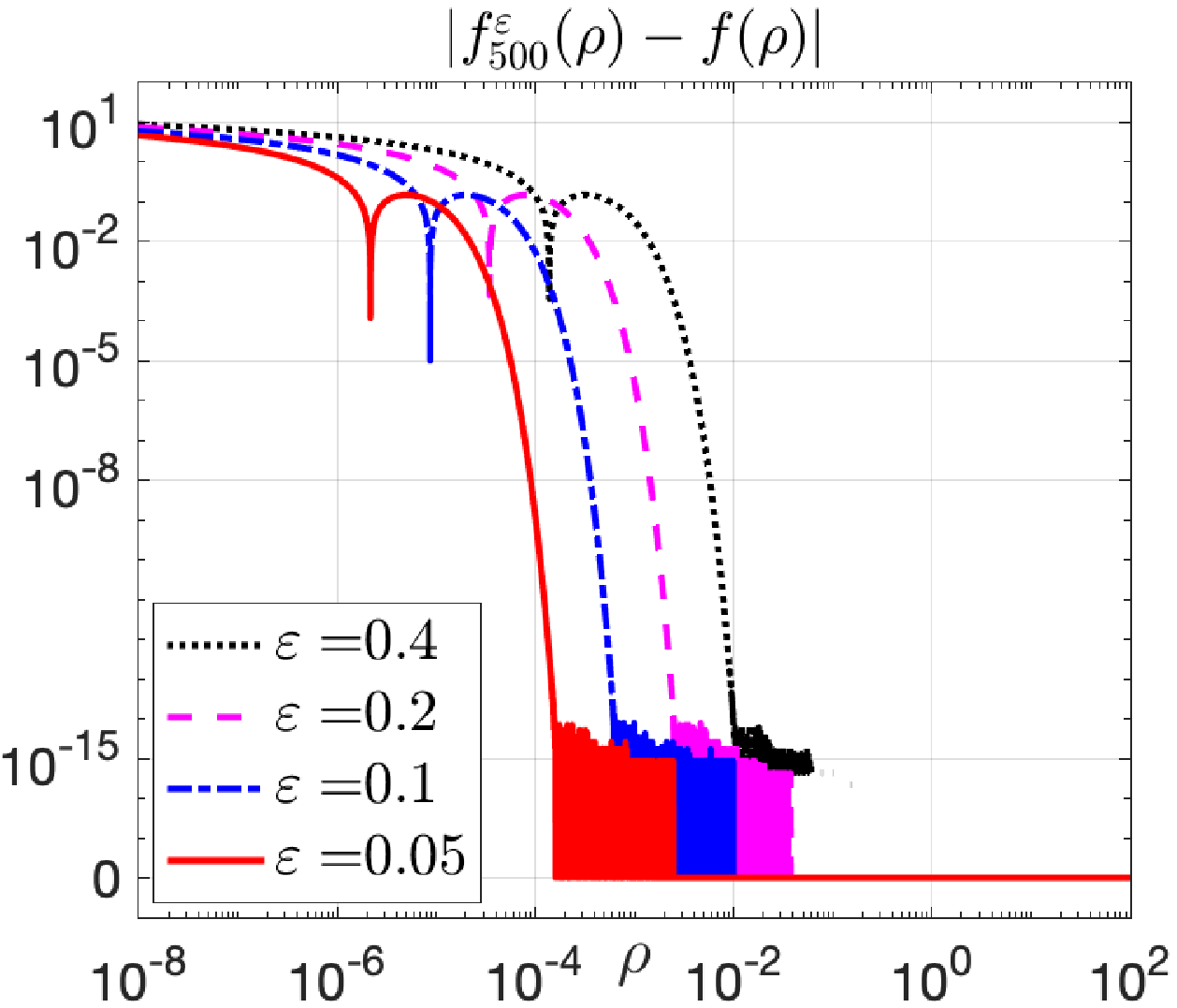}
\end{center}
 \caption{Comparison of different regularizations for the nonlinearity $f(\rho)=\ln \rho$.}
\label{fcomp}
\end{figure}

\begin{figure}[htbp!]
\begin{center}
\includegraphics[width=6cm,height=4cm]{./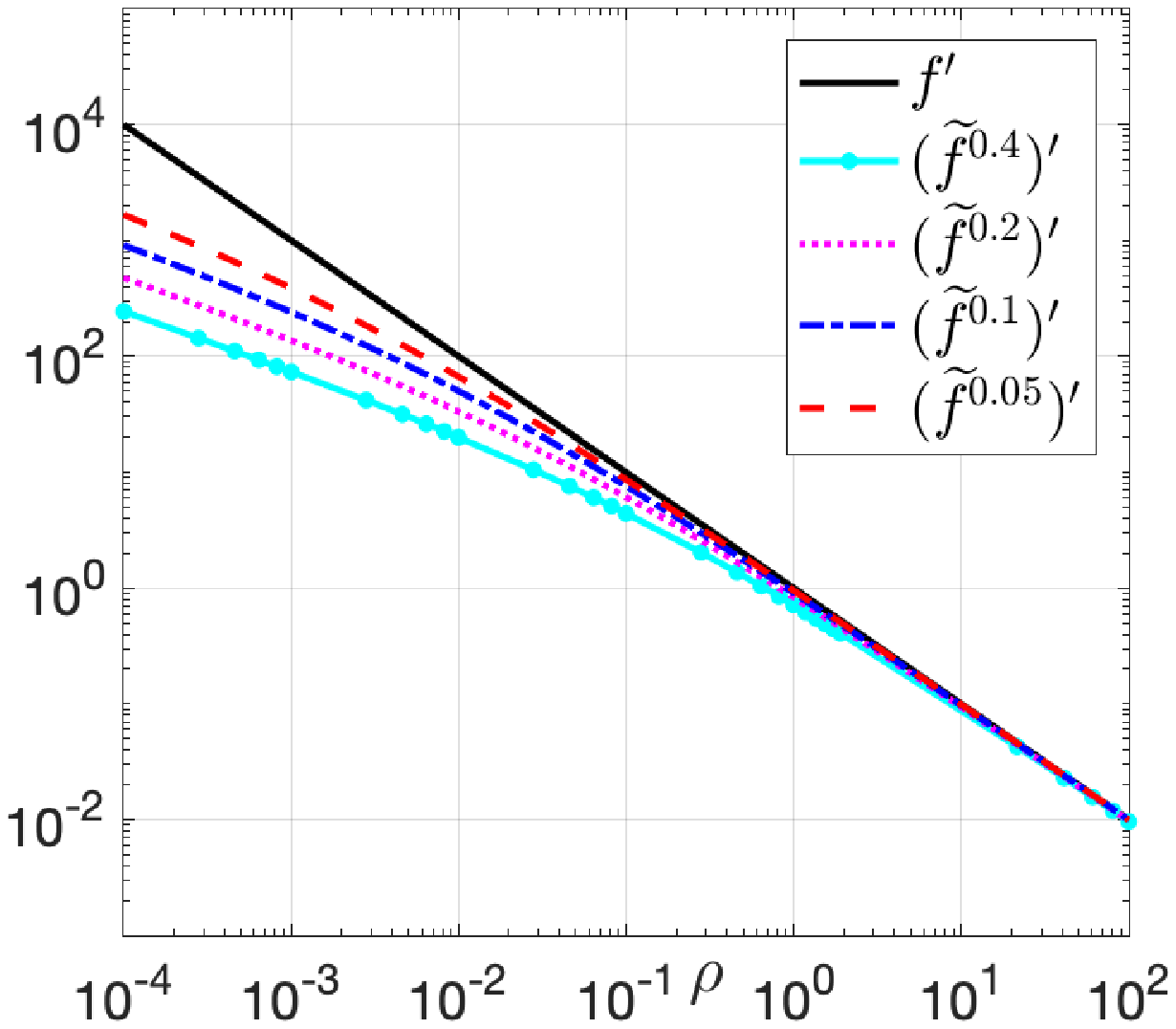}
\quad
\includegraphics[width=6cm,height=4cm]{./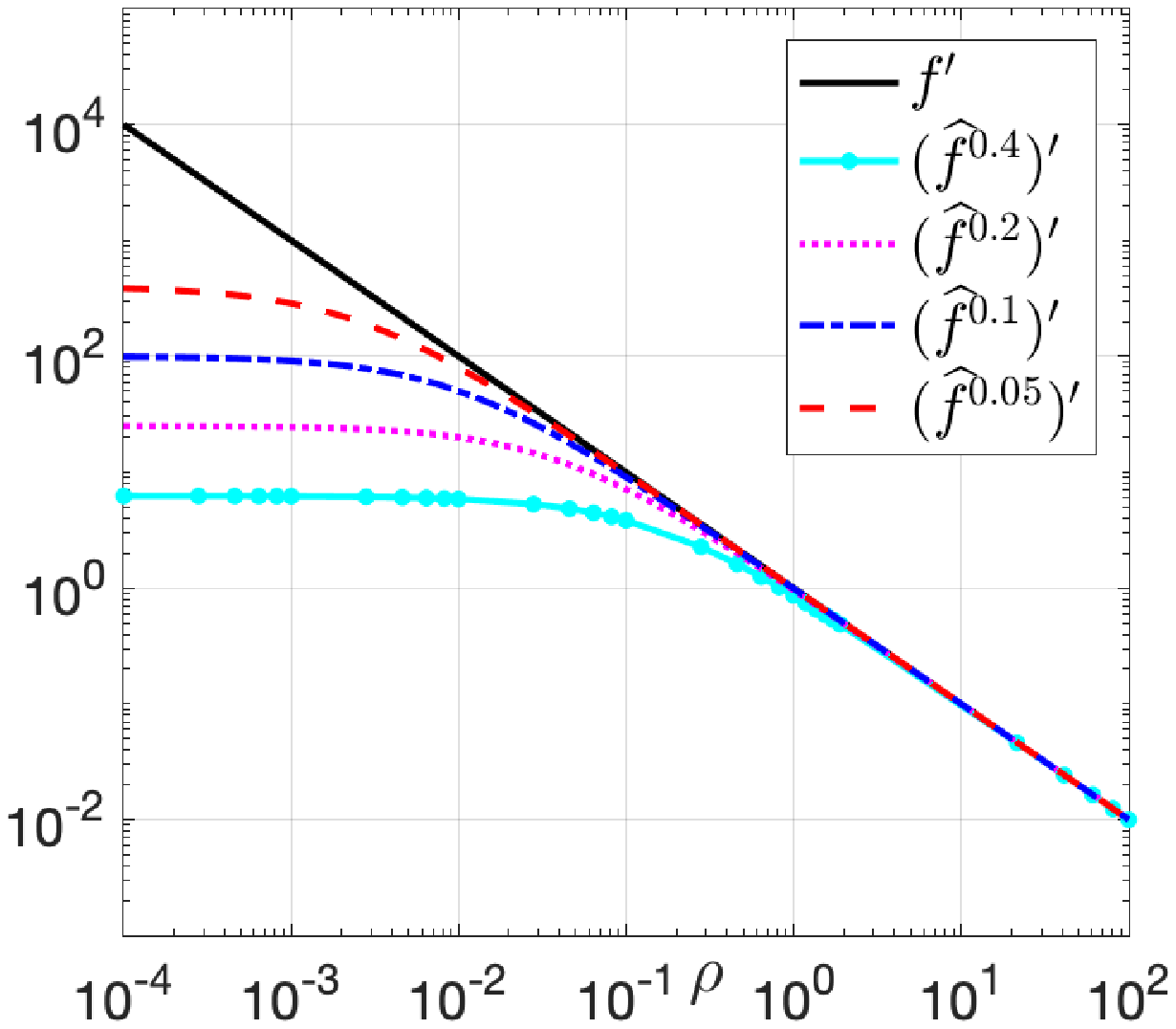}\\[1em]
\includegraphics[width=6cm,height=4cm]{./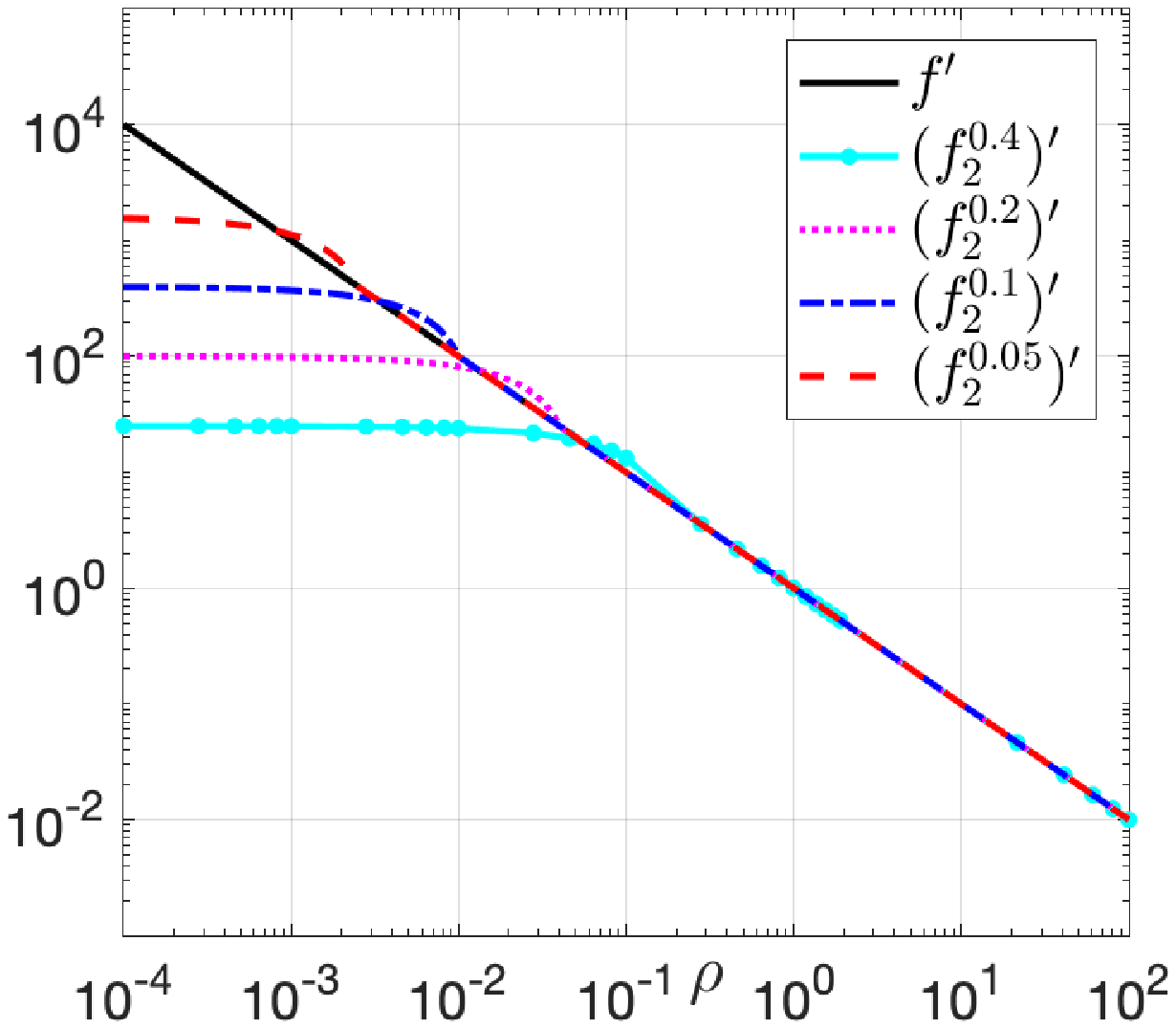}
\quad
\includegraphics[width=6cm,height=4cm]{./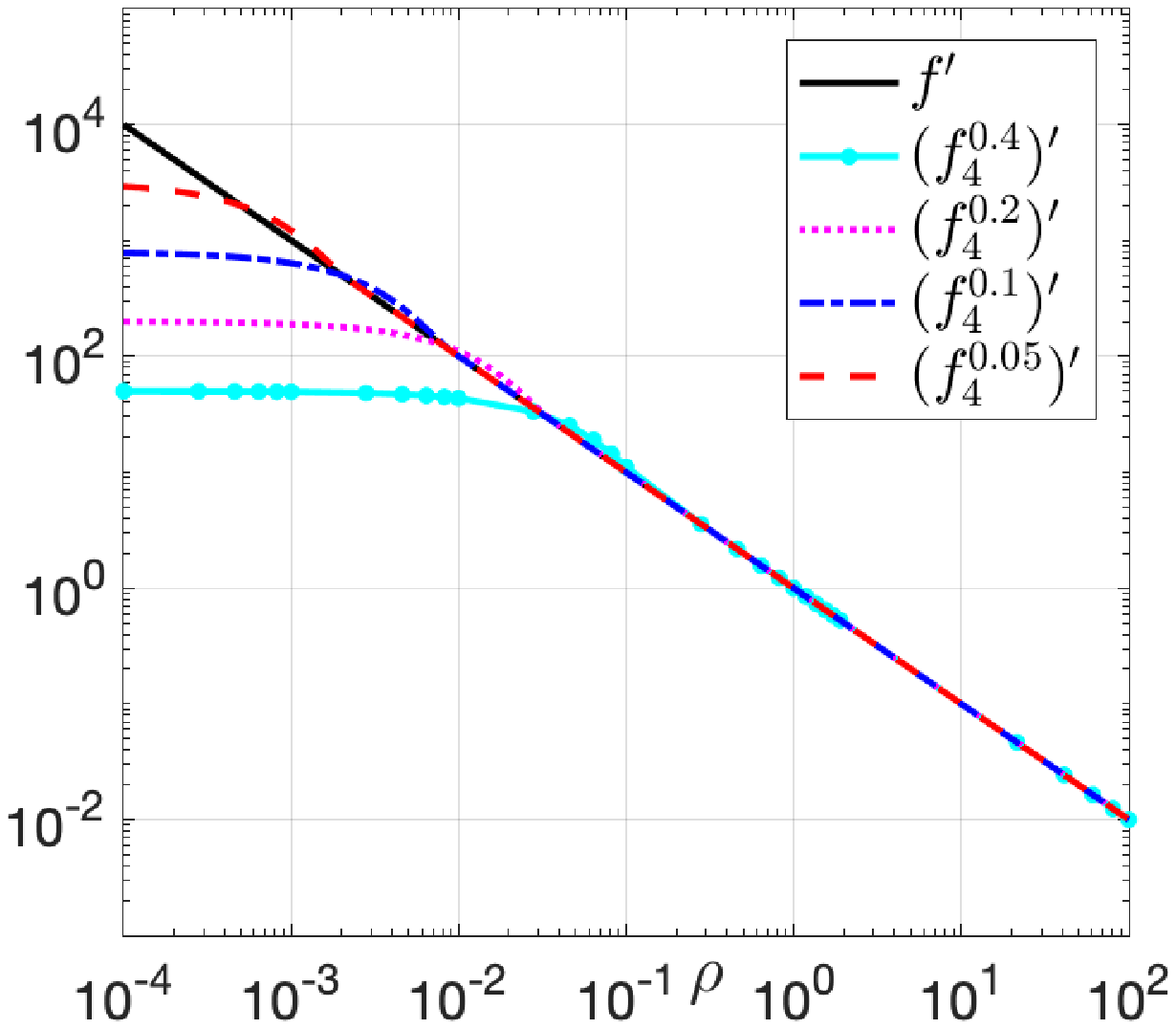}\\[1em]
\includegraphics[width=6cm,height=4cm]{./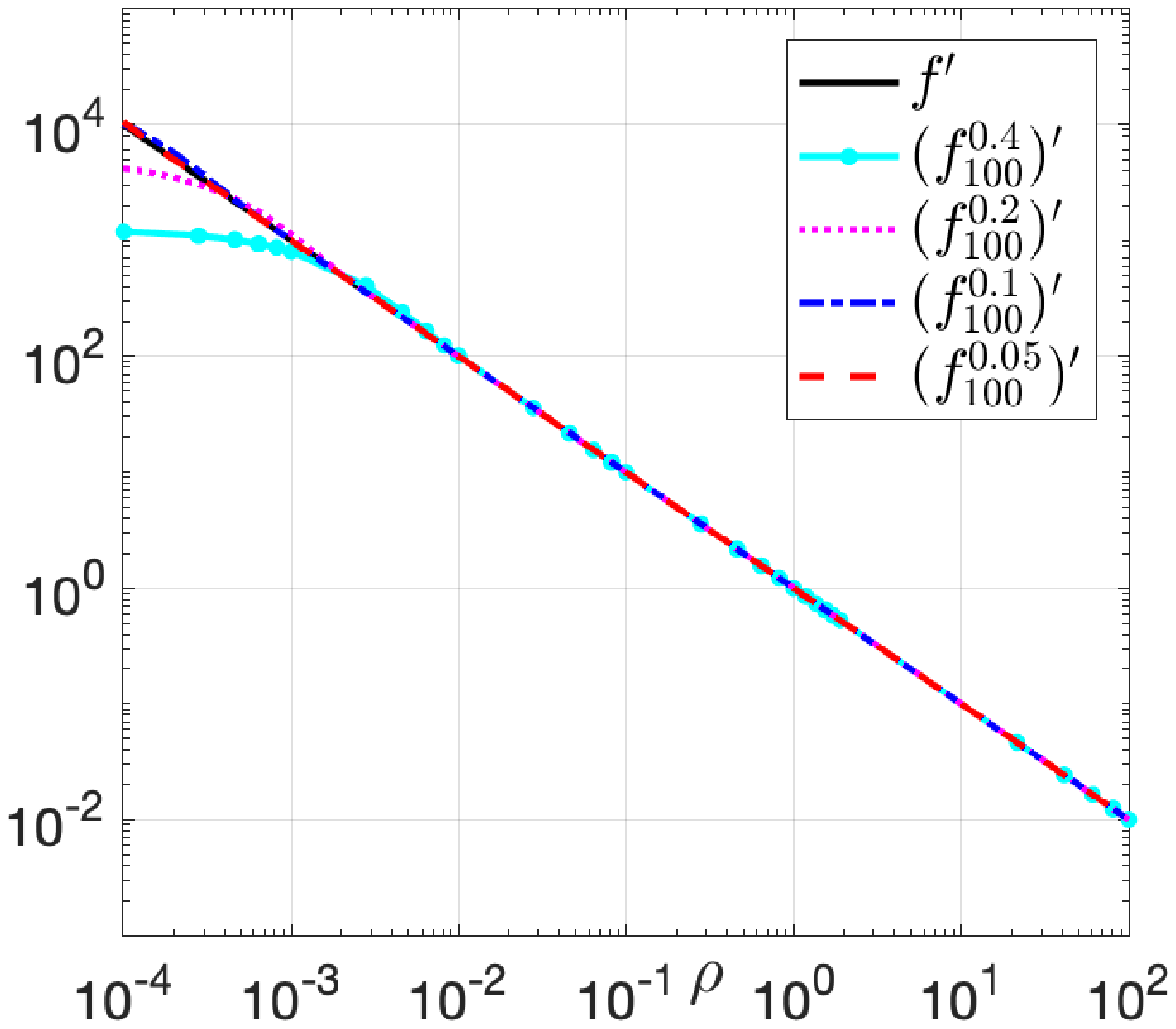}
\quad
\includegraphics[width=6cm,height=4cm]{./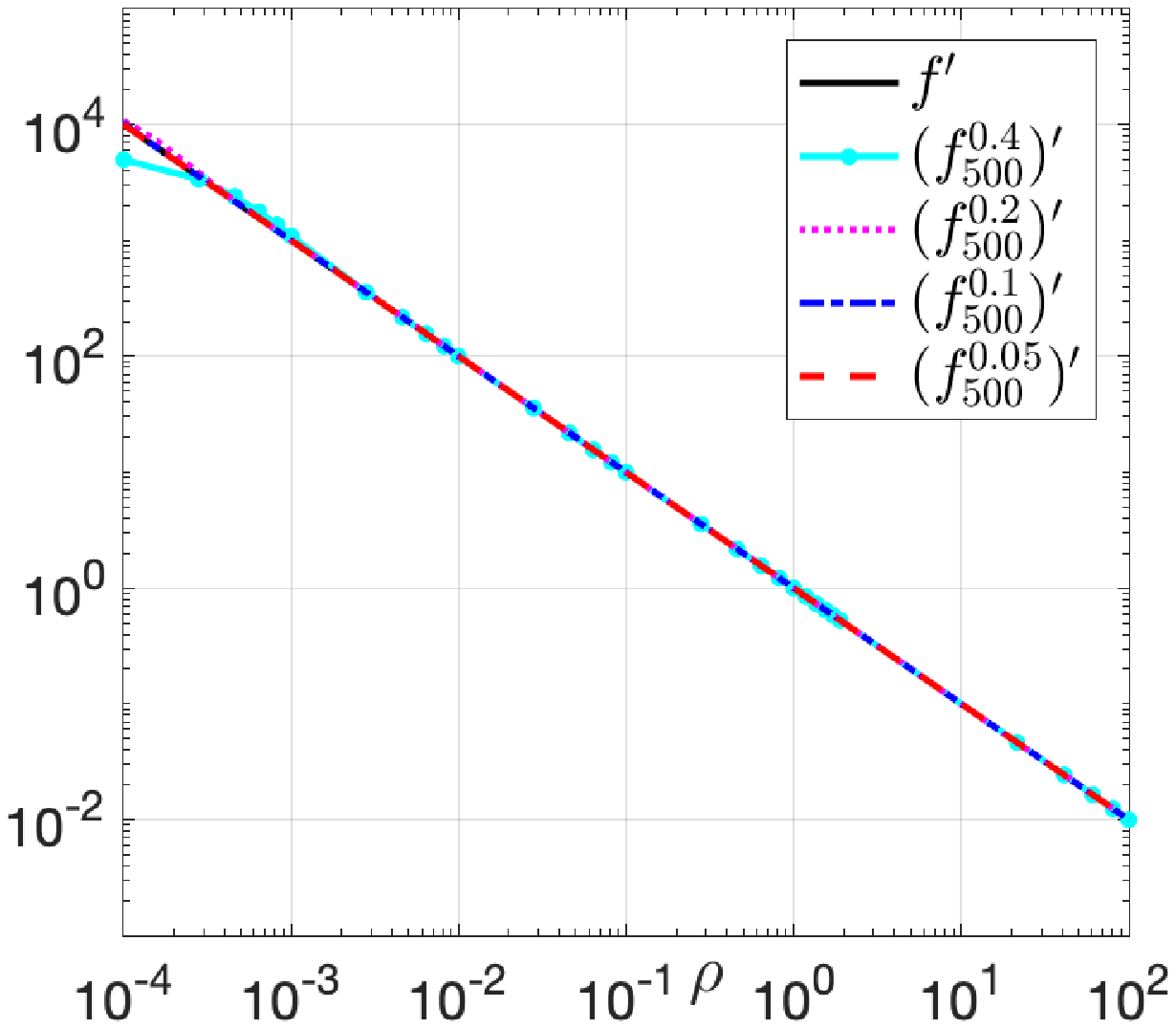}
\end{center}
 \caption{Comparison of different regularizations for  $f'(\rho)=1/\rho$.}
\label{fprime}
\end{figure}

\begin{figure}[htbp!]
\begin{center}
\includegraphics[width=6cm,height=4cm]{./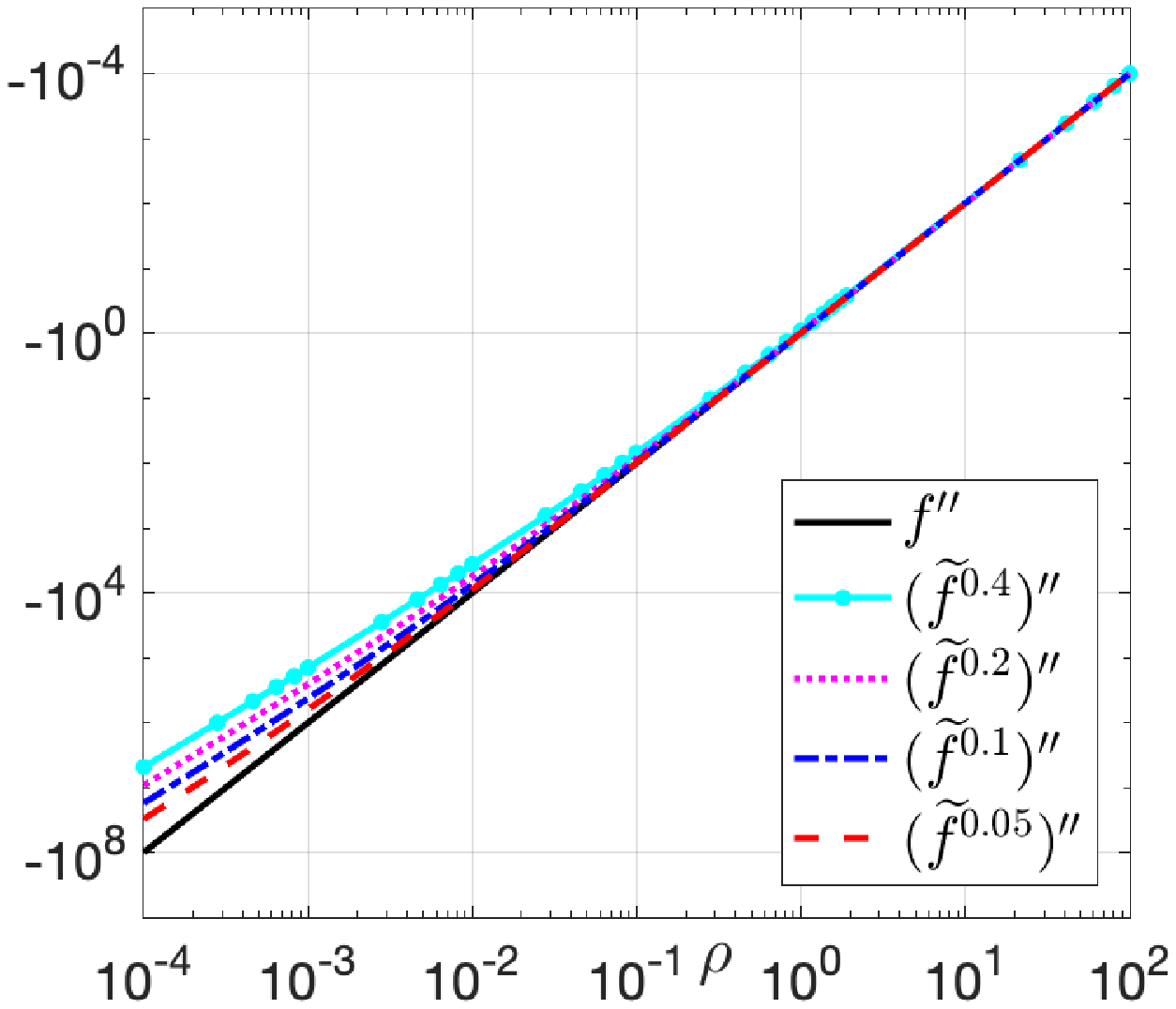}
\quad
\includegraphics[width=6cm,height=4cm]{./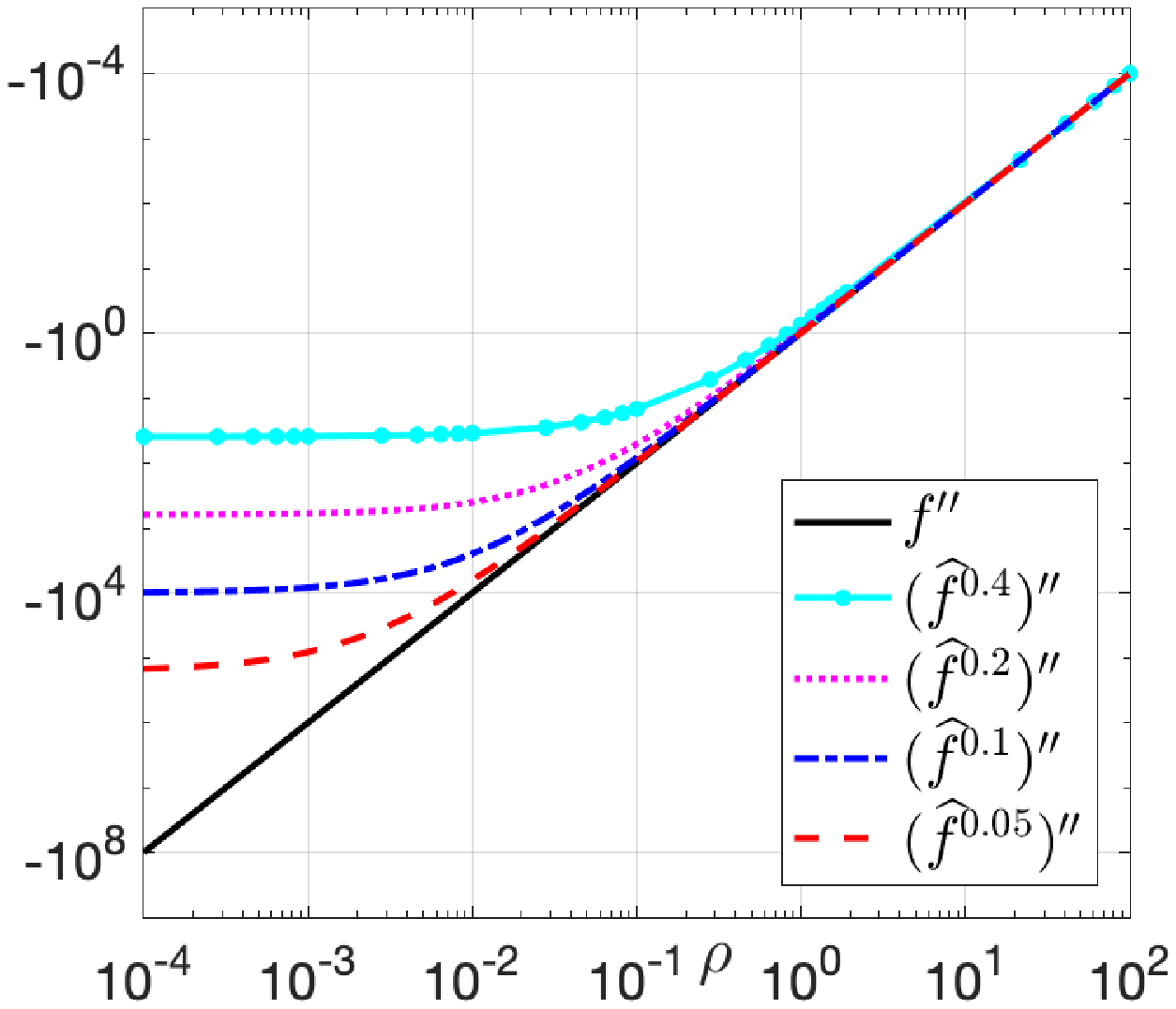}\\[1em]
\includegraphics[width=6cm,height=4cm]{./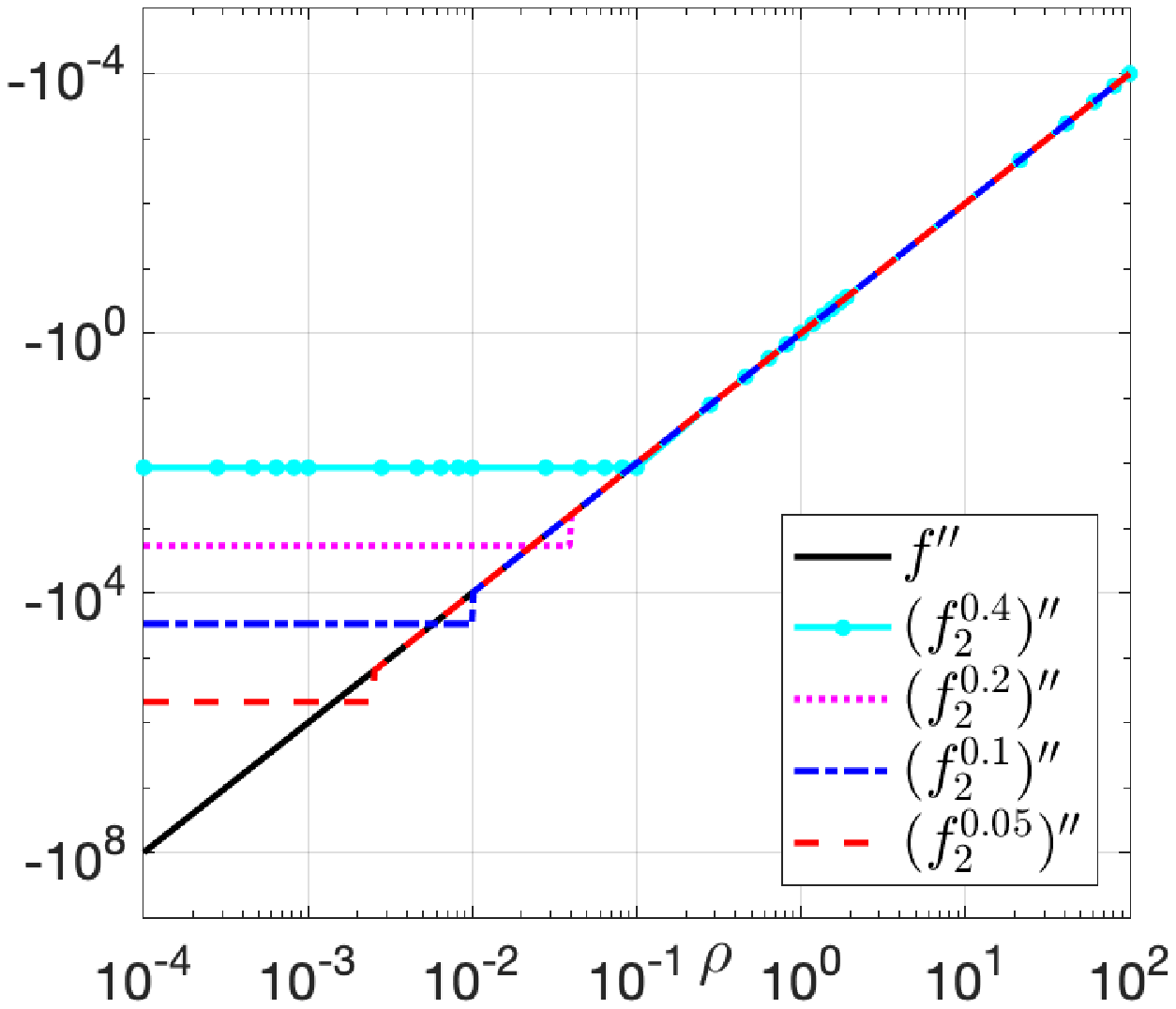}
\quad
\includegraphics[width=6cm,height=4cm]{./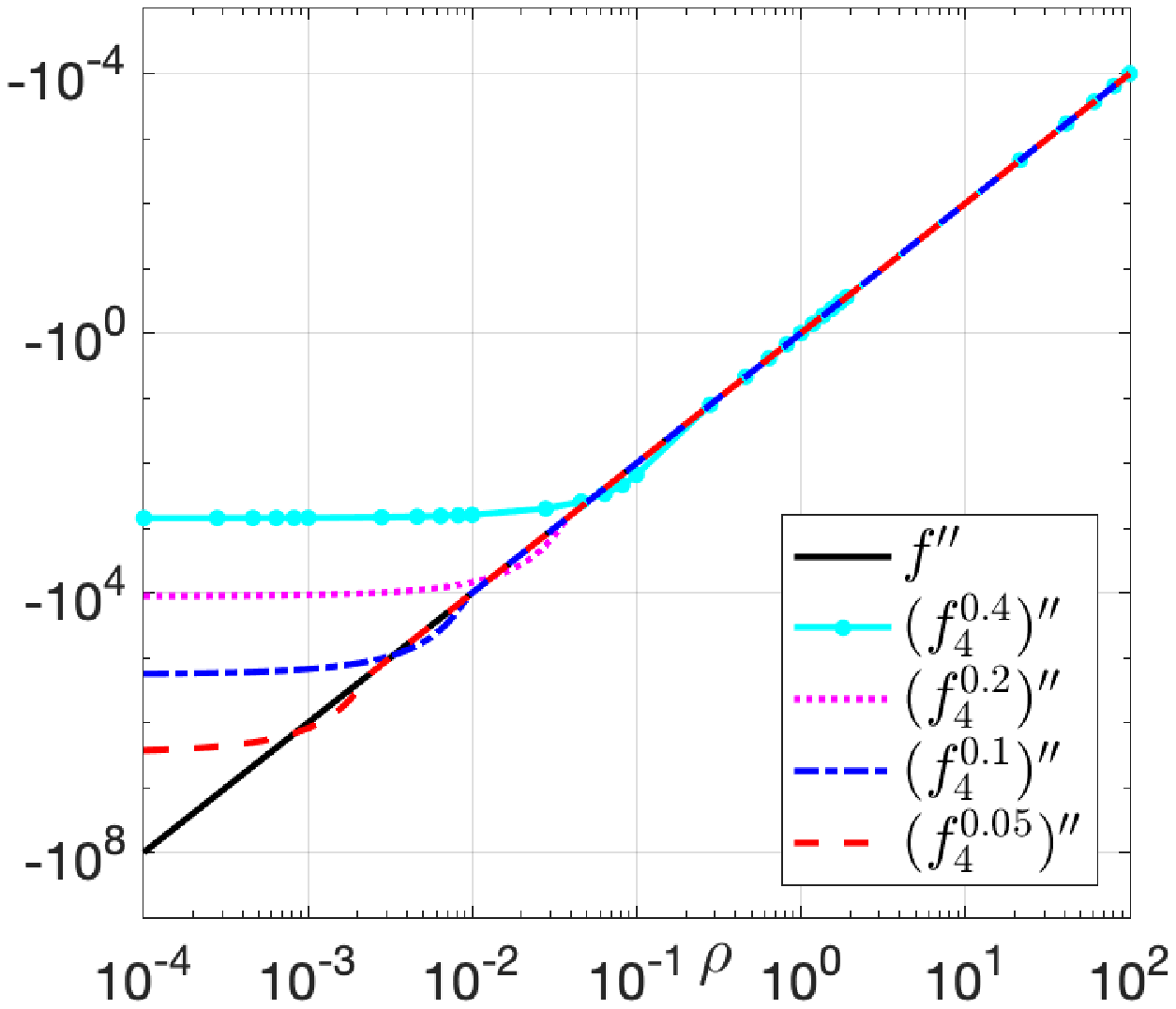}\\[1em]
\includegraphics[width=6cm,height=4cm]{./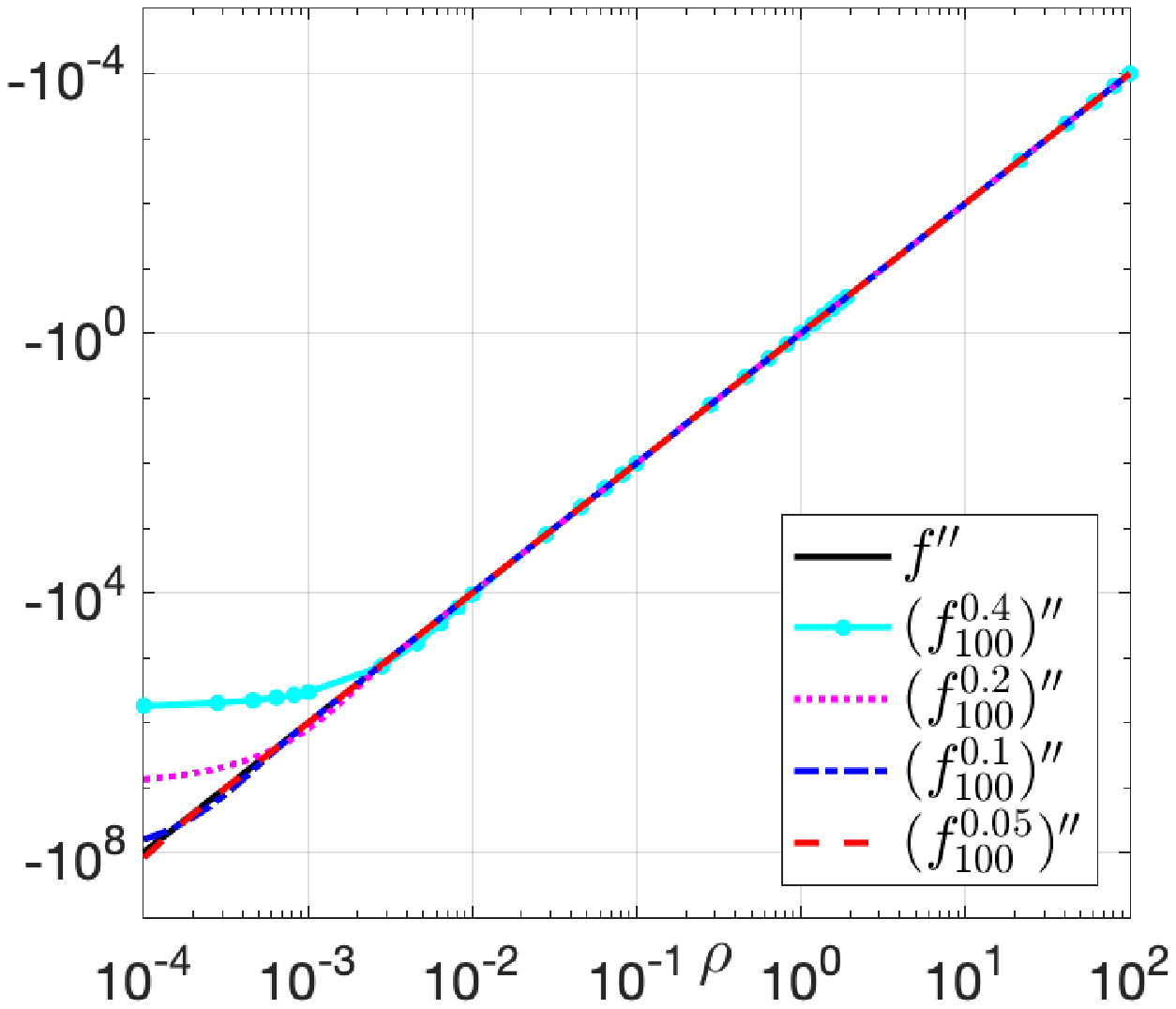}
\quad
\includegraphics[width=6cm,height=4cm]{./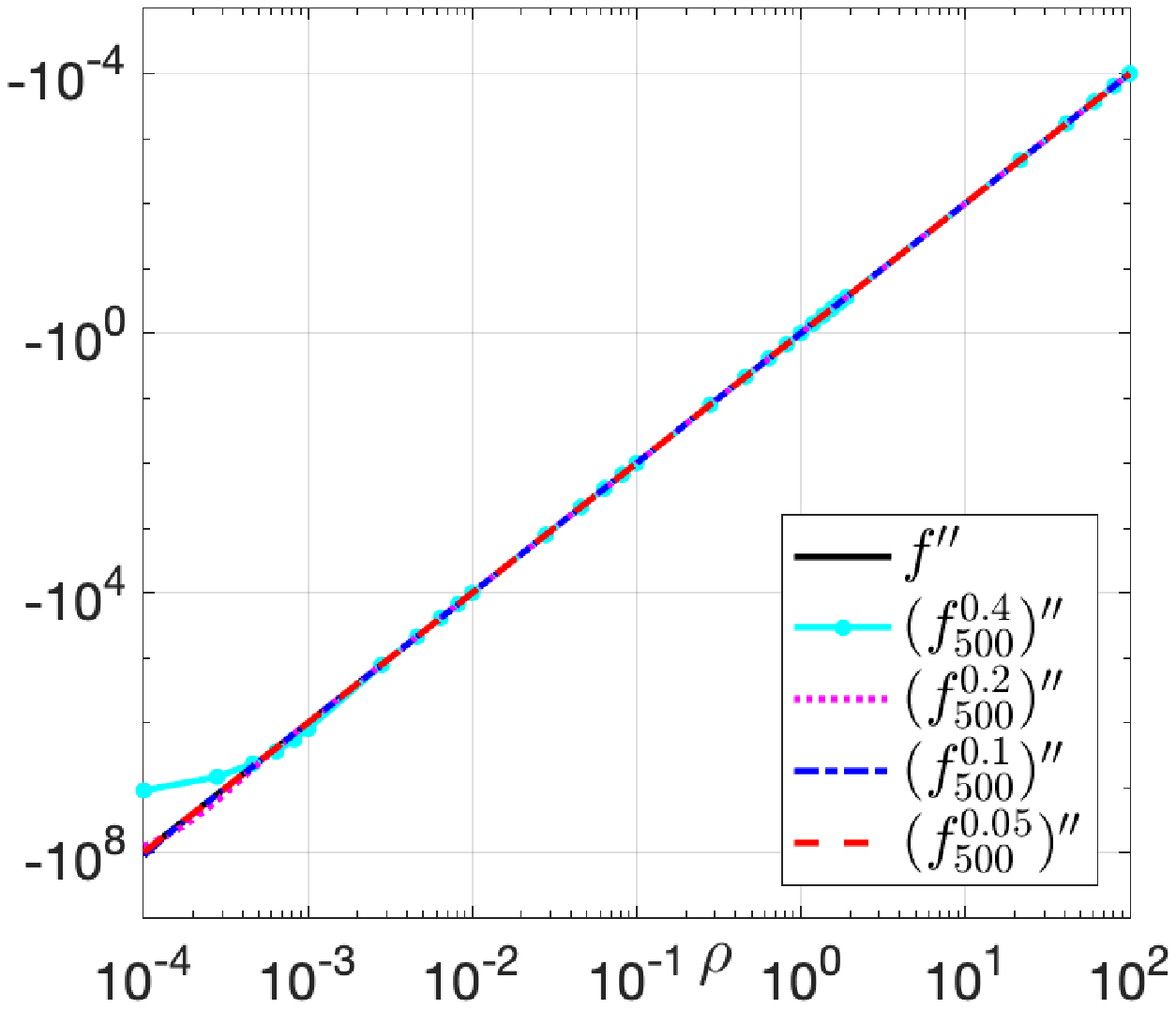}
\end{center}
 \caption{Comparison of different regularizations for  $f''(\rho)=-1/\rho^2$.}
\label{fpp}
\end{figure}

\begin{figure}[htbp!]
\begin{center}
\includegraphics[width=6cm,height=4cm]{./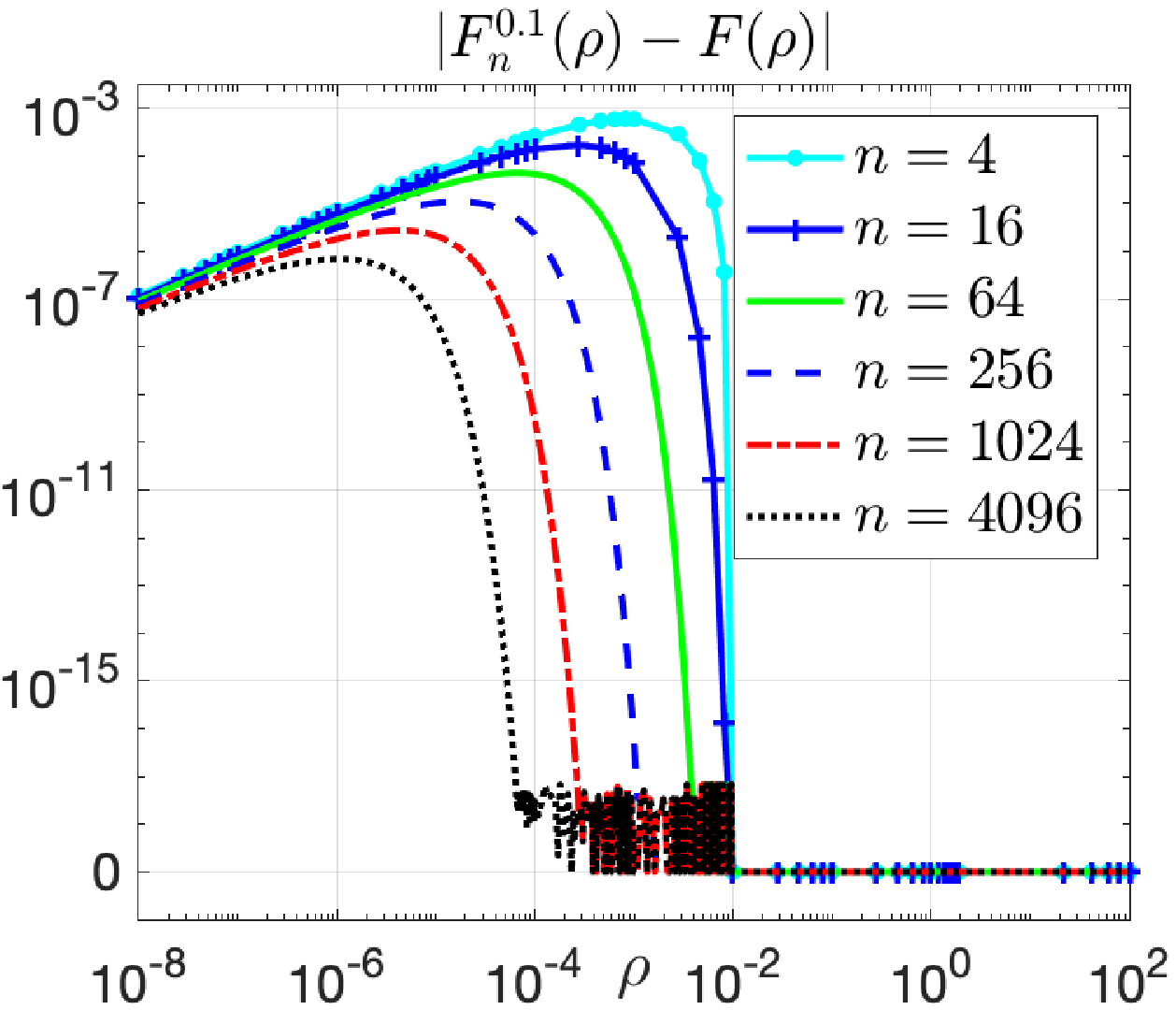}
\quad
\includegraphics[width=6cm,height=4cm]{./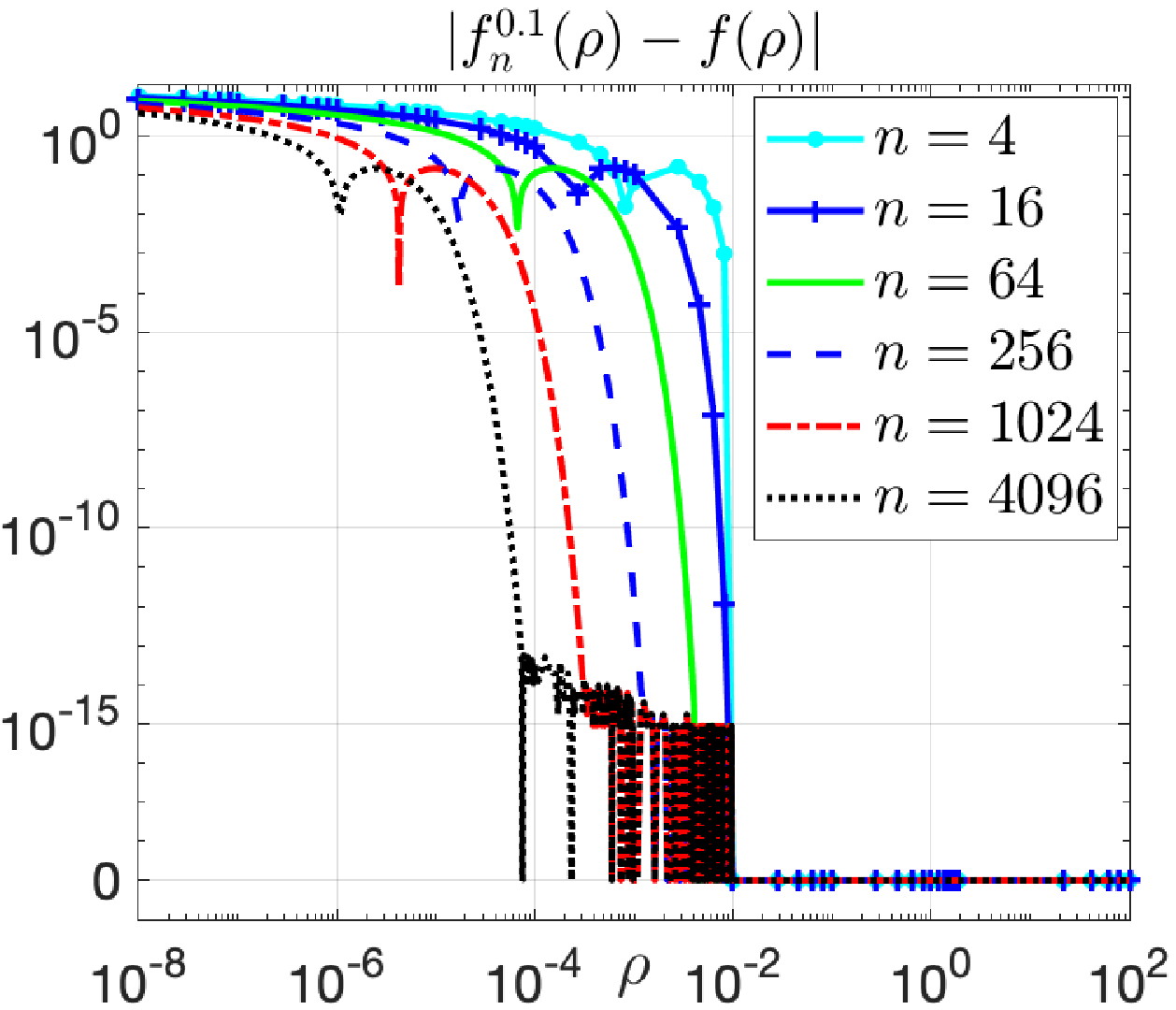}\\[1em]
\includegraphics[width=6cm,height=4cm]{./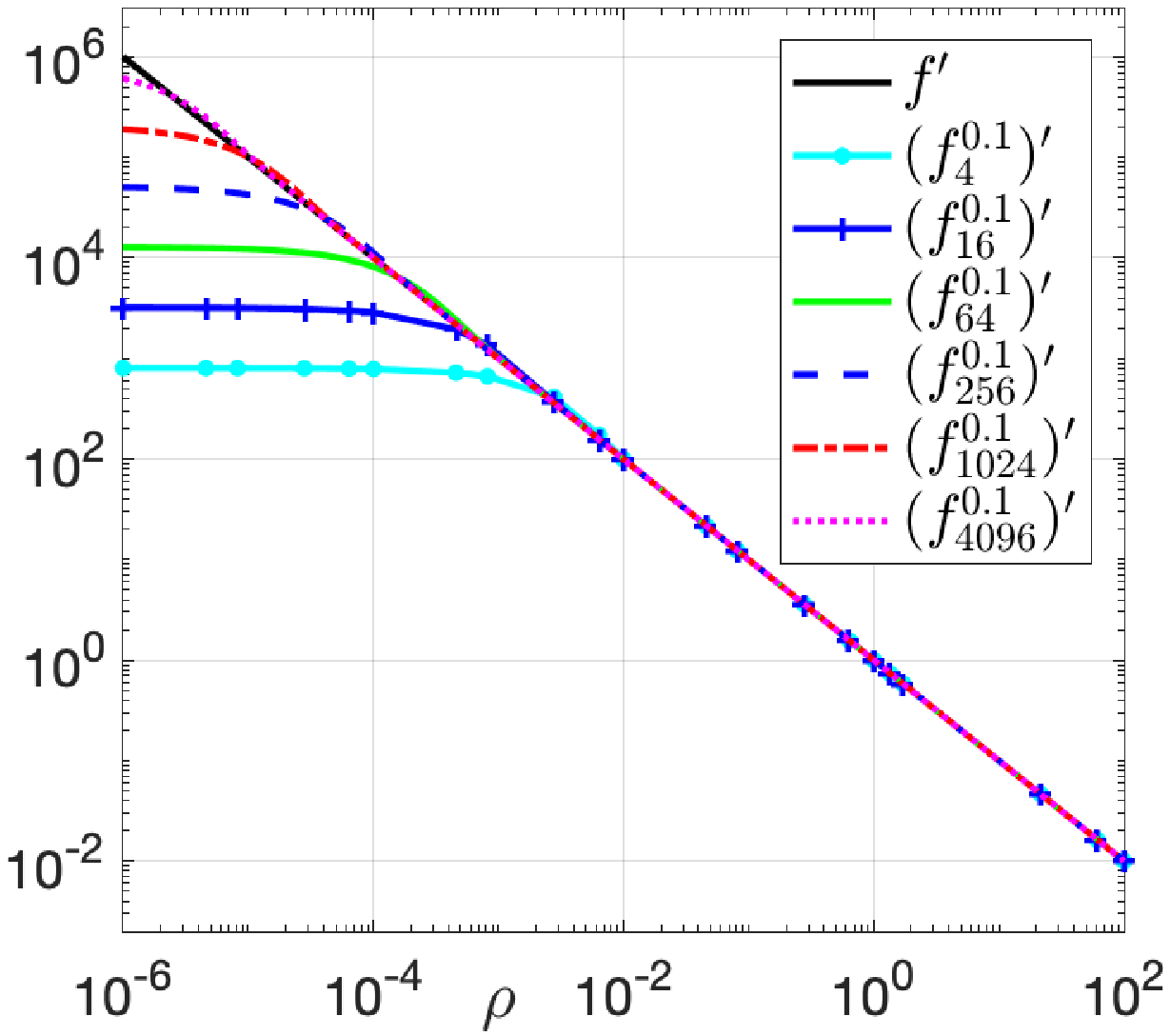}
\quad
\includegraphics[width=6cm,height=4cm]{./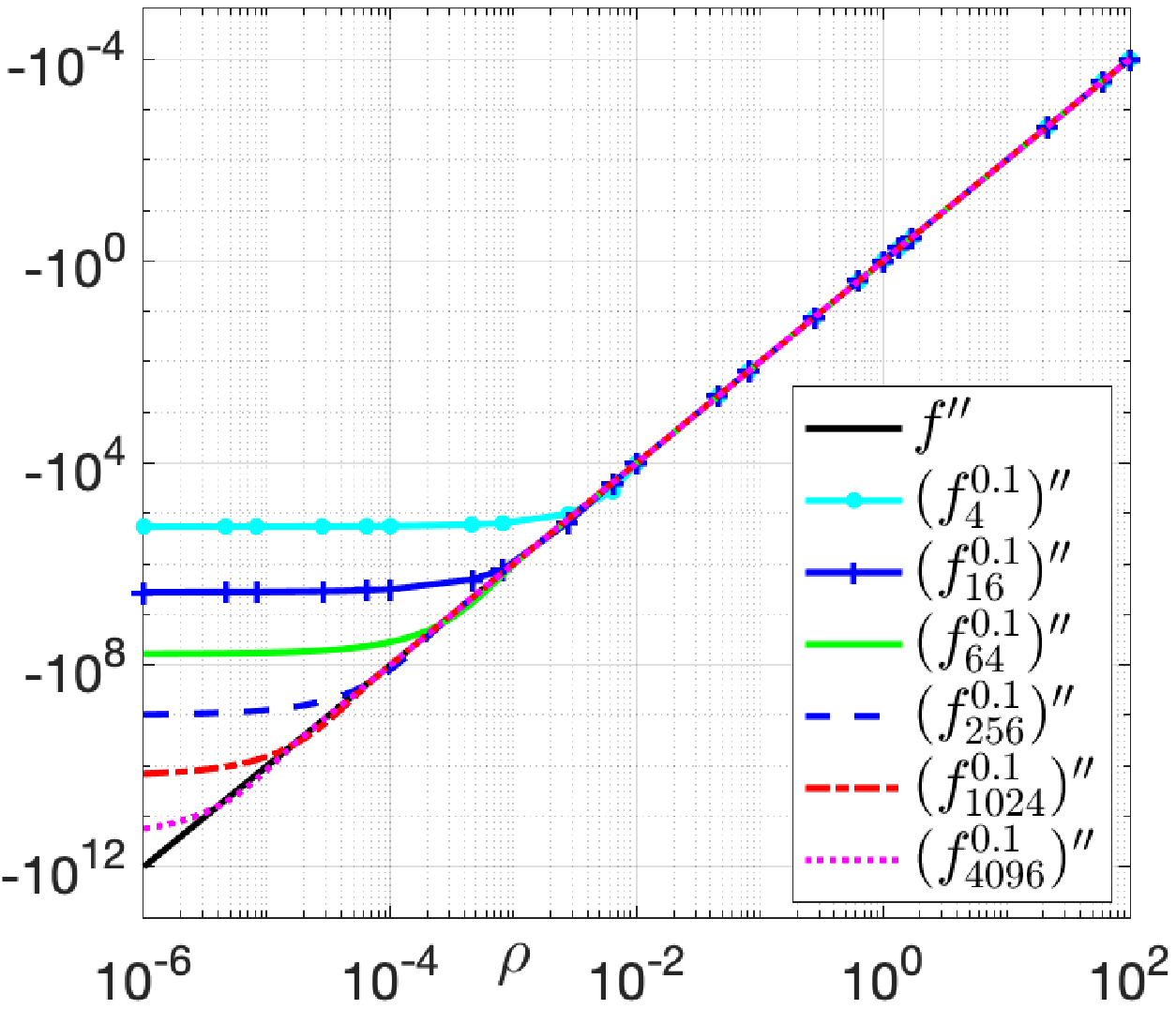}
\end{center}
 \caption{Comparison of regularizations $g_n^{0.1}$ ($g=F, f, f', f''$)  with different order $n$.}
\label{fig:Revision_EnergyReguL_Conv_WRT_Order_N}
\end{figure}

\section{Local energy regularization (LER) for the LogNLS}
%arithmic Schr\"odingerequation}
\label{sec:regulLSE}
In this section, we consider the regularized energy
\be
\label{RegL_Energ}
E_n^\ep(u):=\int_\Omega\left[|\nabla u|^2+\lambda F_n^\ep(|u|^2)\right]d\bx,
\ee
where $F_n^\ep$ is defined in \eqref{Fn}.
The Hamiltonian flow of the regularized energy $i\p_t u=\fl{\delta E_n^\ep(u)}{\delta \overline{u}}$ yields the following energy regularized logarithmic Schr\"odinger equation (ERLogSE)
with a regularizing parameter $0<\ep\ll1 $,
\be\label{ERLSE}
\left\{
\begin{aligned}
&i\p_t u^\ep(\bx,t)=-\Delta u^\ep(\bx,t)+\lambda\,
u^\ep(\bx,t)\,f_n^\ep(|u^\ep(\bx,t)|^2),\quad \bx\in \Omega, \quad
t>0,\\
&u^\ep(\bx,0)=u_0(\bx),\quad \bx\in \overline{\Omega}.
\end{aligned}
\right.
\ee
We recall that $f_n^\ep$ is defined by \eqref{fep}.

\subsection{The Cauchy problem}
To investigate the well-posedness of the problem \eqref{ERLSE}, we first introduce some appropriate spaces. For $\alpha>0$ and $\Omega=\R^d$, denote by $L^2_\alpha$ the weighted $L^2$ space
\[L^2_\alpha:=\{v\in L^2(\mathbb{R}^d), \quad \bx\longmapsto \langle
  \bx \rangle^\alpha v(\bx)\in L^2(\mathbb{R}^d)\},\]
where $\langle \bx \rangle :=\sqrt{1+|\bx|^2}$, with norm
$\|v\|_{L^2_\alpha}:=\|\langle \bx\rangle^\alpha
v(\bx)\|_{L^2(\mathbb{R}^d)}$.
Regarding the Cauchy problem \eqref{ERLSE}, we have similar results as
for the regularization \eqref{RLSE0} in \cite{bao2019error},
but not quite the same. For the convenience of the reader, we recall the main arguments.

\bigskip

\begin{theorem}\label{theo:cauchy}
  Let $\lambda\in \R$, $u_0\in H^1(\Omega)$, and $0<\ep\le 1$. \\
$(1).$ For \eqref{ERLSE} posed on $\Omega=\R^d$ or  a bounded domain
$\Omega$ with homogeneous Dirichlet or periodic boundary condition, there exists a unique, global weak solution $u^\ep\in L^\infty_{\rm
    loc}(\R; H^1(\Omega))$ to \eqref{ERLSE} (with $H^1_0(\Omega)$
  instead of $H^1(\Omega)$ in the Dirichlet case). Furthermore, for
  any given $T>0$, there exists a positive constant $C(\lambda, T)$
  (independent of $n$) such that
    \be\label{unib1}
    \|u^\ep\|_{L^\infty([0, T]; H^1(\Omega))}\le C(\lambda, T)\|u_0\|_{H^1(\Omega)}, \quad \forall \ep>0.
    \ee
 $(2).$  For \eqref{ERLSE} posed on a bounded domain $\Omega$ with
 homogeneous Dirichlet or periodic boundary condition, if in addition $u_0\in H^2(\Omega)$, then $u^\ep \in
L^\infty_{\rm loc}(\R;H^2(\Omega))$ and there
exists a positive constant $C(n, \lambda, T)$ such that
    \be\label{unib2}
    \|u^\ep\|_{L^\infty([0, T]; H^2(\Omega))}\le C(n, \lambda, T)\|u_0\|_{H^2(\Omega)}, \quad \forall \ep>0.
    \ee
$(3).$ For \eqref{ERLSE} on $\Omega=\R^d$, suppose moreover
$u_0\in L^2_\alpha$, for some $0<\alpha\le 1$.
\begin{itemize}
\item There exists a unique, global weak solution $u^\ep\in L^\infty_{\rm
    loc}(\R; H^1(\R^d)\cap L^2_\alpha)$ to \eqref{ERLSE}, and
    \be\label{unib3}
    \begin{split}
   & \|u^\ep\|_{L^\infty([0, T]; H^1)}\le C(n, \lambda, T)\|u_0\|_{H^1}, \\
    &\|u^\ep\|_{L^\infty([0, T]; L^2_\alpha)}\le C(n, \lambda, T, \|u_0\|_{H^1}) \|u_0\|_{L^2_\alpha},
    \quad \forall \ep>0.
    \end{split}
    \ee
\item If in addition $u_0\in H^2(\R^d)$, then $u^\ep \in
L^\infty_{\rm loc}(\R;H^2(\R^d))$, and
\be\label{unib4}
\|u^\ep\|_{L^\infty([0, T]; H^2)}\le C(n, \lambda, T, \|u_0\|_{H^2}, \|u_0\|_{L^2_\alpha}),\quad \forall \ep>0.\ee
\item If  $u_0\in H^2(\R^d)\cap
  L_2^2$, then  $u^\ep \in L^\infty_{\rm loc}(\R;H^2(\R^d)\cap L_2^2)$.
\end{itemize}
\end{theorem}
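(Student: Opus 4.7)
The plan is to follow the well-trodden path for the Cauchy problem of the LogSE with the regularized nonlinearity $f_n^\ep$, making systematic use of the $\ep$- and $n$-uniform structural bounds \eqref{fl}--\eqref{fb} so that the various constants are uniform in the parameters advertised.

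For local existence in $H^1$ I would use a spectral Galerkin scheme on bounded domains and a Strichartz-based fixed-point argument on $\R^d$; since $f_n^\ep$ is of class $C^{n-1}$ with $n\ge 2$, the nonlinearity $u\mapsto u f_n^\ep(|u|^2)$ is locally Lipschitz from $H^1$ to itself (via the Sobolev embedding $H^1\hookrightarrow L^p$ valid for $d\le 3$). Uniqueness in $L^2$ is a direct consequence of \eqref{gl}: writing the equation for $w=u_1^\ep-u_2^\ep$, multiplying by $\overline{w}$ and taking the imaginary part yields $\tfrac{d}{dt}\|w\|_{L^2}^2\le 8n|\lambda|\|w\|_{L^2}^2$, and Gronwall closes the argument. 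The global bound \eqref{unib1} then follows from the conservation laws. Mass conservation gives $\|u^\ep(t)\|_{L^2}=\|u_0\|_{L^2}$, while for the gradient the key pointwise estimate $|F_n^\ep(\rho)-F(\rho)|\le \rho\ln(\ep^2/\rho)\chi_{\{\rho\le \ep^2\}}\le \ep^2/e$ (which follows at once from \eqref{eq:Taylor} and is uniform in $n$) reduces the identity $E_n^\ep(u^\ep)=E_n^\ep(u_0)$ to the standard LogSE energy identity modulo an error of size $O(\ep^2)$; the classical logarithmic Sobolev inequality then absorbs $\lambda\int F(|u^\ep|^2)$ into $\tfrac12\|\nabla u^\ep\|_{L^2}^2$, producing an $n$- and $\ep$-uniform $H^1$ bound, and the blow-up alternative upgrades the solution to a global one.

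For the $H^2$ bound in parts (2)--(3), the naive Lipschitz constant of $f_n^\ep$ from \eqref{fl} is of order $n/\ep$, which would give only an $\ep$-dependent estimate if one tried to propagate $\nabla^2 u^\ep$ directly. The trick is to propagate $v:=\p_t u^\ep$, which solves
\[
i\p_t v=-\Delta v+\lambda f_n^\ep(|u^\ep|^2)\,v+2\lambda\,u^\ep (f_n^\ep)'(|u^\ep|^2)\,\mathrm{Re}(\overline{u^\ep}\,v).
\]
Multiplying by $\overline{v}$ and taking the imaginary part, the term $f_n^\ep(|u^\ep|^2)|v|^2$ is real and drops out, while the remaining contribution is controlled pointwise by $|u^\ep|^2(f_n^\ep)'(|u^\ep|^2)|v|^2\le 3|v|^2$ thanks to the $\ep$-uniform bound in \eqref{fd}. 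Gronwall yields $\|\p_t u^\ep(t)\|_{L^2}\le e^{3|\lambda|t}\|\p_t u^\ep(0)\|_{L^2}$, and the initial datum is bounded uniformly in $\ep$ using that on $\{|u_0|\le\ep\}$ one has $|u_0 f_n^\ep(|u_0|^2)|\le \ep\bigl(2+\ln(n\ep^{-2})\bigr)$ (bounded uniformly for $\ep\in(0,1]$, $n$-dependent), while on $\{|u_0|>\ep\}$ the nonlinearity coincides with $u_0\ln|u_0|^2$ and is controlled by $\|u_0\|_{L^\infty}\le C\|u_0\|_{H^2}$. Elliptic regularity (bounded $\Omega$) or Plancherel (on $\R^d$) then convert the bound on $\p_t u^\ep$ and on $u^\ep f_n^\ep(|u^\ep|^2)$ into \eqref{unib2} and \eqref{unib4}.

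Finally, the weighted $L^2_\alpha$ estimate in part (3) is obtained by multiplying the equation by $\langle\bx\rangle^{2\alpha}\overline{u^\ep}$ and taking the imaginary part: the nonlinear contribution cancels pointwise, and the surviving commutator $[\langle\bx\rangle^{2\alpha},\Delta]$ is controlled by the $H^1$ bound already established, so Gronwall produces the second inequality in \eqref{unib3}; the last assertion on $L^2_2$ follows by iterating with $\alpha=2$. The main obstacle throughout is the $\ep$-uniformity of the $H^2$ bound, and the essential technical idea is the choice of $\p_t u^\ep$ (rather than $\nabla^2 u^\ep$) as the propagated quantity: this is precisely what allows the estimate to close using only the structural bound $|\rho(f_n^\ep)'(\rho)|\le 3$ of \eqref{fd}, bypassing the divergent Lipschitz constant in \eqref{fl}.
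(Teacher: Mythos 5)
Your treatment of parts (2) and (3) is essentially the paper's argument: the key idea of propagating $\partial_t u^\ep$ rather than $\nabla^2 u^\ep$, closing the estimate with the $\ep$- and $n$-uniform bound $|\rho(f_n^\ep)'(\rho)|\le 3$ from \eqref{fd}, and obtaining the weighted $L^2_\alpha$ bound by multiplying the equation by $\langle \bx\rangle^{2\alpha}\overline{u^\ep}$, is exactly what the paper does. (Two small points there: on $\R^d$ the bound on $\|u_0 f_n^\ep(|u_0|^2)\|_{L^2}$ over the region $\{\ep<|u_0|<1\}$ cannot be reduced to $\|u_0\|_{L^\infty}$, since that region may have infinite measure; this is precisely where the paper replaces the H\"older step by the weighted interpolation $\int|u|^{2-2\delta}\lesssim\|u\|_{L^2}^{2-2\delta-d\delta/\alpha}\,\||\bx|^\alpha u\|_{L^2}^{d\delta/\alpha}$, which is why $L^2_\alpha$ enters \eqref{unib4}. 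Also, the $L^2_2$ case does not follow by simply rerunning the commutator estimate with $\alpha=2$, because the surviving term then involves $\langle\bx\rangle^3|u^\ep|\,|\nabla u^\ep|$ and no longer closes against $\|\langle\bx\rangle^2u^\ep\|_{L^2}\|\nabla u^\ep\|_{L^2}$ alone.)

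The genuine gap is in your derivation of \eqref{unib1}. You propose to get the global $H^1$ bound from conservation of $E_n^\ep$ plus the logarithmic Sobolev inequality. This cannot deliver the stated estimate for two reasons. First, \eqref{unib1} is a multiplicative bound, linear in $\|u_0\|_{H^1}$ with a constant $C(\lambda,T)$ independent of $n$ and $\ep$; an energy argument produces at best $\|\nabla u^\ep(t)\|_{L^2}^2\le E_n^\ep(u_0)+(\text{lower order})$, which is not homogeneous in the data. Second, and more seriously, on $\Omega=\R^d$ with only $u_0\in H^1$ the absorption step fails uniformly in $\ep$ for one sign of $\lambda$: the problematic direction requires an upper bound on $\int |u^\ep|^2\ln(1/|u^\ep|^2)$ over $\{\ep^2\le|u^\ep|^2\le 1\}$, where the regularization gives no help and one only has $\ln(1/\rho)\le 2\ln(1/\ep)$, i.e.\ a constant blowing up as $\ep\to0$; a uniform bound there needs spatial decay ($L^{2-2\delta}$ or $L^2_\alpha$ control), which is exactly why the paper's part (3) introduces the weighted space. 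The paper avoids all of this by applying to $\partial_j u^\ep$ the very same trick you use for $\partial_t u^\ep$: differentiating \eqref{ERLSE} in $x_j$, pairing with $\partial_j\overline{u^\ep}$ and taking imaginary parts, the only surviving nonlinear contribution is bounded pointwise by $2|\lambda|\,|u^\ep|^2(f_n^\ep)'(|u^\ep|^2)|\partial_j u^\ep|^2\le 6|\lambda|\,|\partial_j u^\ep|^2$, and Gronwall gives $\|\partial_j u^\ep(t)\|_{L^2}\le e^{6|\lambda|t}\|\partial_j u_0\|_{L^2}$ — uniform in $n$ and $\ep$, linear in the data, valid for both signs of $\lambda$ and on all the domains considered. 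Replacing your energy-conservation step by this gradient estimate repairs part (1) and makes the whole proposal align with the paper.
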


\begin{proof}
  (1).
For fixed $\ep>0$, the nonlinearity in \eqref{ERLSE} is locally
Lipschitz continuous, and grows more slowly than any power of
$|u^\ep|$. Standard Cauchy theory for
nonlinear Schr\"odinger equations implies that there exists a unique
solution $u^\ep\in L^\infty_{\rm loc}(\R;H^1(\Omega))$ to
\eqref{ERLSE} (respectively, $u^\ep\in L^\infty_{\rm
  loc}(\R;H^1_0(\Omega))$ in the Dirichlet case); see
e.g. \cite[Corollary~3.3.11 and Theorem~3.4.1]{CazCourant}. In
addition, the $L^2$-norm of $u^\ep$ is independent of time,
\[ \|u^\ep(t)\|_{L^2(\Omega)}^2=\|u_0\|_{L^2(\Omega)}^2,\quad \forall t\in\R.\]
  For $j\in
\{1,\dots,d\}$, differentiate \eqref{ERLSE} with respect to $x_j$:
\begin{equation*}
  \(i\partial_t +\Delta\)\partial_j u^\ep = \lambda \partial_j u^\ep f_n^\ep(|u^\ep|^2)
  + 2\lambda u^\ep (f_n^\ep)'(|u^\ep|^2){\mathrm{Re}}\(\overline{u^\ep}\partial_j u^\ep\).
\end{equation*}
Multiply the above equation by $\partial_j \overline{u^\ep}$,
integrate on $\Omega$, and take the imaginary part: \eqref{fd} implies
\begin{equation*}
  \frac{1}{2}\frac{d}{dt}\|\partial_j u^\ep \|_{L^2(\Omega)}^2 \le
  6|\lambda| \|\partial_j u^\ep \|_{L^2(\Omega)}^2,
\end{equation*}
hence \eqref{unib1}, by Gronwall lemma.

(2). The propagation of the $H^2$ regularity is standard, since
$f_n^\ep$ is smooth, so we focus on \eqref{unib2}. We now
differentiate \eqref{ERLSE} with respect to time:  we get the same
estimate as above, with $\partial_j$ replaced by $\partial_t$, and so
\begin{equation*}
  \|\partial_t u^\ep (t)\|_{L^2(\Omega)}^2\le  \|\partial_t u^\ep
  (0)\|_{L^2(\Omega)}^2e^{12|\lambda\rvert \lvert t|}.
\end{equation*}
In view of \eqref{ERLSE},
\begin{equation*}
 i \partial_t u^\ep_{\mid t=0} = -\Delta u_0 +\lambda
  u_0f_n^\ep(|u_0|^2).
\end{equation*}
For $0<\delta<1$, we have
\begin{equation*}
  \sqrt\rho |f_n^\ep(\rho)|\le C(\delta)\(\rho^{1/2-\delta/2} + \rho^{1/2+\delta/2}\),
\end{equation*}
for some $C(\delta)$ independent of $\ep$ and $n$, so for $\delta>0$
sufficiently small, Sobolev embedding entails
\begin{equation*}
  \|\partial_t u^\ep (0)\|_{L^2(\Omega)}\le \|u_0\|_{H^2(\Omega)}+
  C(\delta)\(\|u_0\|^{1-\delta}_{L^{2-2\delta}(\Omega)} +
  \|u_0\|_{H^1(\Omega)}^{1+\delta}\).
\end{equation*}
Since $\Omega$ is bounded, H\"older inequality yields
\begin{equation*}
  \|u_0\|_{L^{2-2\delta}(\Omega)} \le
  \|u_0\|_{L^2(\Omega}|\Omega|^{\delta/(2-2\delta)}.
\end{equation*}
Thus, the first term in \eqref{ERLSE} is controlled in $L^2$. Using
the same estimates as above, we control the last
term in \eqref{ERLSE} (thanks to \eqref{unib1}), and we infer an $L^2$-estimate for $\Delta
u^\ep$, hence \eqref{unib2}.

(3). In the case $\Omega=\R^d$, we multiply \eqref{ERLSE} by $\langle
  \bx \rangle^\alpha$, and the same energy estimate as before now
  yields
  \begin{align*}
    \frac d{dt} \|u^\ep\|_{L^2_\alpha}^2 = 4\alpha\,\mathrm{Im}\int_{\R^d}  \frac{\bx
  \cdot \nabla  u^{\ep}  }{  \langle \bx \rangle^{2-2\alpha}} \,
\overline {u^\ep}(t)  \, d\bx & \lesssim
\|\<\bx\>^{2\alpha-1}u^\ep\|_{L^2(\R^d)}\|\nabla
                                u^\ep\|_{L^2(\R^d)}\\
    &\lesssim \|\<\bx\>^{\alpha}u^\ep\|_{L^2(\R^d)}\|\nabla
u^\ep\|_{L^2(\R^d)},
\end{align*}
where the last inequality follows from the assumption $\alpha\le 1$,
hence \eqref{unib3}. To prove \eqref{unib4},
  we resume the same approach as to get \eqref{unib2}, with the
difference that the H\"older estimate must be replaced by some other
estimate (see e.g. \cite{CaGa18}): for $\delta>0$ sufficiently small,
 \begin{equation*}
  \int_{{\mathbb R}^d} |u|^{2-2\delta} \lesssim \|u\|_{L^2(\R^d)}^{2-2\delta-d \delta/\alpha}
  \left\lVert \lvert \bx\rvert^\alpha u\right\rVert_{L^2(\R^d)}^{d
      \delta/\alpha} .
\end{equation*}
The $L^2_2$ estimate follows easily, see
e.g. \cite{bao2019error} for details.
\end{proof}
\color{black}
\subsection{Convergence of the regularized model}
\label{sec:cvmodel}
In this subsection, we show an approximation property of the
regularized model \eqref{ERLSE} to \eqref{LSE}.

\begin{lemma}\label{thmcon}
Suppose the equation \eqref{ERLSE} is set on $\Omega$, where $\Omega=\mathbb{R}^d$,
or $\Omega\subset \mathbb{R}^d$ is a bounded domain with homogeneous
Dirichlet or periodic boundary condition. We have
the general estimate:
\be\label{gene}
\fl{d}{dt}\|u^\ep(t)-u(t)\|_{L^2}^2\le |\lambda|\left(4\|u^\ep(t)-u(t)\|_{L^2}^2+
6\ep\|u^\ep(t)-u(t)\|_{L^1}\right).
\ee
\end{lemma}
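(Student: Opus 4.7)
The plan is to derive an evolution identity for $w := u^\ep - u$, test it against $\bar w$, take imaginary parts, and then split the nonlinearity into (i) a pure logarithmic piece handled by the Cazenave--Haraux inequality of Lemma~\ref{pre} and (ii) a defect $u^\ep\bigl(f_n^\ep(|u^\ep|^2)-f(|u^\ep|^2)\bigr)$ supported on $\{|u^\ep|<\ep\}$, for which a pointwise bound of order $\ep$ has to be proved. Writing down the two equations and subtracting gives
\[
 i\p_t w +\Delta w = \lambda\bigl[u^\ep f_n^\ep(|u^\ep|^2) - u f(|u|^2)\bigr].
\]
Multiplying by $\bar w$, integrating over $\Omega$ and taking imaginary parts, the contribution from $\Delta w$ is real (on $\R^d$ by density, and on the bounded case thanks to the homogeneous Dirichlet / periodic boundary conditions that $w$ inherits), so that
\[
 \frac12\frac{d}{dt}\|w\|_{L^2}^2
 = \lambda\,\mathrm{Im}\!\int_\Omega
 \bigl[u^\ep f_n^\ep(|u^\ep|^2)-u f(|u|^2)\bigr]\bar w\,d\bx.
\]

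\textbf{Splitting the nonlinearity.} I would write
\[
 u^\ep f_n^\ep(|u^\ep|^2) - u f(|u|^2)
 = \bigl[u^\ep f(|u^\ep|^2) - u f(|u|^2)\bigr]
 + u^\ep\bigl[f_n^\ep(|u^\ep|^2) - f(|u^\ep|^2)\bigr].
\]
For the first bracket, Lemma~\ref{pre} applied pointwise with $z_1=u^\ep(\bx,t)$, $z_2=u(\bx,t)$ (note that $u f(|u|^2)=u\ln|u|^2$) gives
\[
 \bigl|\mathrm{Im}\!\left\{[u^\ep f(|u^\ep|^2)-u f(|u|^2)]\bar w\right\}\bigr|
 \le 2|w|^2,
\]
so this term contributes at most $2|\lambda|\,\|w\|_{L^2}^2$ after integration.

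\textbf{The defect bound.} The crux is the pointwise inequality
\[
 |u^\ep|\,\bigl|f_n^\ep(|u^\ep|^2) - f(|u^\ep|^2)\bigr| \le 3\ep,
\]
which will then yield $|\lambda|\cdot 3\ep\|w\|_{L^1}$ for the second bracket. Since $f_n^\ep(\rho)=\ln\rho$ on $\{\rho\ge\ep^2\}$, the defect is supported on $\{|u^\ep|<\ep\}$. Setting $\rho=|u^\ep|^2<\ep^2$ and $t=\rho/\ep^2\in(0,1]$, the explicit formula for $q_n^\ep$ combined with the classical identity $-\ln t = \sum_{k\ge1}(1-t)^k/k$ collapses to
\[
 f_n^\ep(\rho)-\ln\rho \;=\; -(1-t)^n + \sum_{k=n+1}^\infty\frac{(1-t)^k}{k}.
\]
Multiplying by $\sqrt{\rho}=\ep\sqrt{t}$, the first term is controlled by $\ep\sqrt{t}(1-t)^n\le\ep$, while the tail is bounded by $\ep\sqrt{t}\,(-\ln t)\le 2\ep/e$ (the maximum of $y\mapsto y(-\ln y^2)$ on $[0,1]$ being $2/e$). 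Summing gives $(1+2/e)\ep<3\ep$.

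\textbf{Conclusion and main difficulty.} Combining the two contributions and multiplying by $2$ yields exactly \eqref{gene}. I expect the main obstacle to be the defect bound: a naive termwise estimate of $|q_n^\ep(\rho)-\ln\rho|$ using the explicit sum produces a factor $\sum_{k=1}^{n-1}1/k\sim\ln n$, so the constant would blow up with $n$ and make this approach useless for the ERLogSE. The cancellation exploited above, between the tail of the logarithmic series and the polynomial correction in $q_n^\ep$, is what makes the bound uniform in $n$ and produces a clean $\ep$ rate; this is the only non-mechanical step in the argument.
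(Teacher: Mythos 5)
Your proof is correct and follows the same route as the paper: the identical decomposition of the nonlinearity into the pure logarithmic difference (handled pointwise by Lemma~\ref{pre}, contributing $2|\lambda|\,\|w\|_{L^2}^2$) plus the defect $u^\ep\bigl[f_n^\ep(|u^\ep|^2)-\ln(|u^\ep|^2)\bigr]$ supported on $\{|u^\ep|<\ep\}$, followed by the energy estimate and a doubling of constants. The only divergence is in bounding the defect: the paper splits it via \eqref{eq:Q'} and the integral Taylor remainder \eqref{eq:Taylor}, then swaps the order of integration to reduce to $\int_0^{\ep^2}s^{-1/2}\,ds=2\ep$, whereas you expand $\ln t=-\sum_{k\ge1}(1-t)^k/k$ and estimate the tail pointwise by $\sqrt{t}\,(-\ln t)\le 2/e$ --- both arguments exploit the same cancellation between the polynomial correction and the logarithmic series, and your constant $(1+2/e)\ep$ is in fact slightly sharper than the paper's $3\ep$, so the stated bound \eqref{gene} follows either way.
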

\begin{proof}
Subtracting \eqref{LSE} from \eqref{ERLSE}, we see that the error function $e^\ep:=u^\ep-u$ satisfies
\[
i\p_t e^\ep+\Delta e^\ep=\lambda \left[u^\ep\ln(|u^\ep|^2)-u\ln(|u|^2)\right]+\lambda u^\ep\left[
f^\ep_n(|u^\ep|^2)-\ln (|u^\ep|^2)\right]\chi_{\{|u^\ep|<\ep\}}.
\]
Multiplying the above error equation by $\overline{e^\ep(t)}$, integrating
in space and taking imaginary parts, we can get by using
Lemma~\ref{pre}, \eqref{eq:Taylor}  and \eqref{eq:Q'} that
\begin{align*}
&\quad\fl{1}{2}\fl{d}{dt}\|e^\ep(t)\|_{L^2}^2\\
&=2\lambda\, \mathrm{Im} \int_{\Omega}
\left[u^\ep\ln(|u^\ep|)-u\ln(|u|)\right]\overline{e^\ep}(\bx,t)d\bx\\
 &\quad+\lambda\, \mathrm{Im} \int_{|u^\ep|<\ep}u^\ep\left[
f_n^\ep(|u^\ep|^2)-\ln (|u^\ep|^2)\right]\overline{e^\ep}(\bx,t)d\bx\\
&\le 2 |\lambda|\|e^\ep(t)\|_{L^2}^2+|\lambda|\Big|\int_{|u^\ep|<\ep}u^\ep\overline{e^\ep}
\left[Q_n^\ep(|u^\ep|^2)-\ln (|u^\ep|^2)+|u^\ep|^2(Q_n^\ep)'(|u^\ep|^2)\right]d\bx\Big|\\
&\le 2 |\lambda|\|e^\ep(t)\|_{L^2}^2+|\lambda|\,\Big|\int_{|u^\ep|<\ep}\overline{e^\ep}u^\ep
\Big[\int_{|u^\ep|^2}^{\ep^2} \fl{(s-|u^\ep|^2)^n}{s^{n+1}}ds-1+|u^\ep|^2(Q_n^\ep)'(|u^\ep|^2)\Big]d\bx\Big|\\
&= 2 |\lambda|\,\|e^\ep(t)\|_{L^2}^2+|\lambda|\,\Big|\int_{|u^\ep|<\ep}\overline{e^\ep}u^\ep
\Big[\int_{|u^\ep|^2}^{\ep^2} \fl{(s-|u^\ep|^2)^n}{s^{n+1}}ds-\left(1-\fl{|u^\ep|^2}{\ep^2}\right)^n\Big]d\bx\Big|\\
&\le 2|\lambda|\,\|e^\ep(t)\|_{L^2}^2+\ep|\lambda|\|e^\ep\|_{L^1}+
|\lambda|\,\Big|\int_0^{\ep^2} s^{-n-1}\int_{|u^\ep|^2<s}\overline{e^\ep}u^\ep
(s-|u^\ep|^2)^{n}d\bx ds\Big|\\
&\le 2|\lambda|\,\|e^\ep(t)\|_{L^2}^2+3\ep|\lambda|\|e^\ep\|_{L^1}.
\end{align*}
This yields the result.
\end{proof}

Invoking the same arguments as in \cite{bao2019error}, based on
the previous error estimate, and interpolation between $L^2$ and $H^2$,
we get the following error estimate.
\begin{proposition}\label{prop1}
If $\Omega$ has finite measure and $u_0\in H^2(\Omega)$, then for any $T>0$,
\[
\|u^\ep-u\|_{L^\infty([0,T]; L^2(\Omega))}\le C_1\ep,\quad
\|u^\ep-u\|_{L^\infty([0,T]; H^1(\Omega))}\le C_2\ep^{1/2},
\]
where $C_1$ depends on $|\lambda|$, $T$, $|\Omega|$, and $C_2$ depends in addition on $\|u_0\|_{H^2(\Omega)}$.
If $\Omega=\R^d$, $1\le d\le 3$ and $u_0\in
H^2(\R^d)\cap L^2_2$, then for any $T>0$, we have
\[\|u^\ep-u\|_{L^\infty([0,T]; L^2(\mathbb{R}^d))}\le D_1\ep^{\fl{4}{4+d}},\quad
\|u^\ep-u\|_{L^\infty([0,T]; H^1(\mathbb{R}^d))}\le D_2\ep^{\fl{2}{4+d}},\]
where $D_1$ and $D_2$ depend on $d$, $|\lambda|$, $T$, $\|u_0\|_{L^2_2}$ and $\|u_0\|_{H^2(\mathbb{R}^d)}$.
\end{proposition}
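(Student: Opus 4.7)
The plan is to bootstrap off the differential inequality of Lemma~\ref{thmcon}. Setting $e^\ep=u^\ep-u$ and $y(t)=\|e^\ep(t)\|_{L^2}^2$, that lemma yields
\[
y'(t)\le 4|\lambda|\,y(t)+6|\lambda|\,\ep\,\|e^\ep(t)\|_{L^1},\qquad y(0)=0,
\]
so everything reduces to controlling $\|e^\ep\|_{L^1}$ in terms of $y$ and quantities that are already uniformly bounded in $\ep$. Once the $L^2$-rate is obtained, the $H^1$-rate will follow from the Gagliardo-Nirenberg interpolation $\|v\|_{H^1}\le C\|v\|_{L^2}^{1/2}\|v\|_{H^2}^{1/2}$ (valid on $\R^d$ and on bounded $\Omega$ with the stated boundary conditions), combined with the uniform $H^2$ bound on $u^\ep$ furnished by Theorem~\ref{theo:cauchy}(2) (resp.~(3)) and the analogous $H^2$ bound for the LogSE solution $u$.

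On a bounded $\Omega$ the $L^1$ control is trivial: Cauchy-Schwarz gives $\|e^\ep\|_{L^1(\Omega)}\le |\Omega|^{1/2}\sqrt{y}$, and Young's inequality absorbs the cross term $\ep\sqrt{y}$ into $\tfrac{1}{2}y+C\ep^2$. The resulting linear Gronwall inequality $y'\le Cy+C\ep^2$ with $y(0)=0$ integrates to $y(t)\le C_1^2\ep^2$, i.e.\ $\|u^\ep-u\|_{L^\infty([0,T];L^2(\Omega))}\le C_1\ep$.

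The whole-space case is the substantive one. Since $L^2(\R^d)$ does not embed into $L^1(\R^d)$, we leverage the weighted bound $u^\ep,u\in L^\infty([0,T];L^2_2(\R^d))$ from Theorem~\ref{theo:cauchy}(3). Splitting $\int_{\R^d}|e^\ep|\,d\bx$ over $\{|\bx|<R\}$ and $\{|\bx|\ge R\}$ and applying Cauchy-Schwarz on each piece with the weight $\<\bx\>^{2}$ (whose reciprocal square is integrable at infinity exactly for $d\le 3$, producing a factor $\lesssim R^{(d-4)/2}$), then optimizing in $R$, yields the interpolation
\[
\|e^\ep\|_{L^1(\R^d)}\le C\,\|e^\ep\|_{L^2}^{(4-d)/4}\,\|e^\ep\|_{L^2_2}^{d/4}\le C'\,y^{(4-d)/8}.
\]
Substituting into Lemma~\ref{thmcon} gives the sublinear differential inequality $y'\le C_1 y+C_2\ep\, y^\beta$ with $\beta:=(4-d)/8\in[1/8,3/8]$. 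Since $\beta<1$, the change of variables $z:=y^{1-\beta}=y^{(4+d)/8}$ linearizes the inequality into $z'\le C_1(1-\beta)z+C_2(1-\beta)\ep$, and Gronwall together with $z(0)=0$ produces $z(t)\le C\ep$, whence
\[
\|u^\ep-u\|_{L^\infty([0,T];L^2(\R^d))}=y^{1/2}\le (C\ep)^{1/(2(1-\beta))}=D_1\,\ep^{4/(4+d)}.
\]
The two $H^1$ estimates then fall out of Gagliardo-Nirenberg as described above, yielding $C_2\ep^{1/2}$ and $D_2\ep^{2/(4+d)}$, respectively.

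The main obstacle is the whole-space step: identifying the sharp interpolation exponent for $\|e^\ep\|_{L^1}$, which is what forces the weighted $L^2_2$ hypothesis and restricts the result to $d\le 3$ (the weight $\<\bx\>^{-2}$ must be in $L^2$ outside a ball), and then closing the Gronwall loop despite the sublinear right-hand side. A minor subtlety is that the change of variables $z=y^{1-\beta}$ is singular at $y(0)=0$; this is handled by first regularizing $y\rightsquigarrow y+\dt$ for small $\dt>0$, running Gronwall on $(y+\dt)^{1-\beta}$, and letting $\dt\to 0$ at the end.
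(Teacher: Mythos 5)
Your proposal is correct and follows the paper's proof in all essentials: the same differential inequality from Lemma~\ref{thmcon}, the bound $\|e^\ep\|_{L^1}\le|\Omega|^{1/2}\|e^\ep\|_{L^2}$ plus Young and Gronwall on bounded domains, the interpolation $\|v\|_{L^1}\le C_d\|v\|_{L^2}^{1-d/4}\|v\|_{L^2_2}^{d/4}$ on $\R^d$ (valid precisely for $d\le 3$), and Gagliardo--Nirenberg between $L^2$ and $H^2$ together with the uniform $H^2$ bounds of Theorem~\ref{theo:cauchy} for the $H^1$ rates. The one place you diverge is the integration of the resulting sublinear inequality $y'\le C_1y+C_2\ep\,y^{(4-d)/8}$ on $\R^d$: the paper applies Young's inequality with the tuned weight to replace $\ep\|v\|_{L^1}$ by $\|v\|_{L^2}^2+\ep^{8/(4+d)}\|v\|_{L^2_2}^{2d/(4+d)}$ and then runs a linear Gronwall argument, whereas you keep the sublinear term and linearize via the Bernoulli substitution $z=y^{1-\beta}$ (with the $y\rightsquigarrow y+\dt$ regularization to handle $y(0)=0$). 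Both routes give the same exponent $4/(4+d)$; the paper's version requires guessing the target rate to balance Young's inequality, while yours extracts it automatically at the cost of the small regularization step, so the difference is cosmetic rather than substantive.
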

\begin{proof}
The proof is the same as that in \cite{bao2019error}. We just list the outline for the readers'  convenience.
When $\Omega$ is bounded, the convergence in $L^2$ follows from Gronwall's inequality by applying \eqref{gene} and the estimate
$\|v\|_{L^1}\le |\Omega|^{1/2}\|v\|_{L^2}$. The estimate in
$H^1$ follows form the Gagliardo-Nirenberg inequality $\|v\|_{H^1}\le C\|v\|^{1/2}_{L^2}\|v\|_{H^2}^{1/2}$ and the property \eqref{unib2}.
For $\Omega=\mathbb{R}^d$, the  convergence in ${L^2}$ can be established by Gronwall's inequality and the estimate (cf. \cite{bao2019error})
\[\|v\|_{L^1}\le C_d\|v\|_{L^2}^{1-d/4}\|v\|_{L^2_2}^{d/4}
\le C_d \left(\ep^{-1}\|v\|_{L^2}^2+\ep^{\frac{4-d}{4+d}}\|v\|_{L^2_2}^{\frac{2d}{4+d}}\right),\]
which is derived by the Cauchy-Schwarz inequality and Young's inequality. The convergence in $H^1$ can similarly derived by the Gagliardo-Nirenberg inequality.
\end{proof}

\subsection{Convergence of the energy}
\label{sec:cvenergy}
By construction,  the energy is conserved, i.e.,
\begin{equation}\label{engre}
E^\ep_n(u^\ep)=\int_{\Omega}\big[|\nabla u^\ep(\bx,t)|^2+\lambda
F_n^\ep(|u^\ep(\bx,t)|^2)\big]d\bx=E^\ep_n(u_0).
\end{equation}
For the convergence of the energy, we have the following estimate.
\begin{proposition}\label{energyc}
For $u_0\in H^1(\Omega)\cap L^\alpha(\Omega)$ with $\alpha\in (0,2)$, the energy $E_n^\ep(u_0)$ converges to $E(u_0)$ with
$$|E_n^\ep(u_0)-E(u_0)|\le |\lambda|\,\|u_0\|_{L^\alpha}^\alpha \fl{\ep^{2-\alpha}}{1-\alpha/2}.$$
In addition, for bounded $\Omega$, we have
\[|E_n^\ep(u_0)-E(u_0)|\le |\lambda|\, |\Omega|\,\ep^2.\]
\end{proposition}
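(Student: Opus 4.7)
The plan is to reduce the energy difference to an integral over the set where $F^\ep_n$ and $F$ differ, establish a clean pointwise bound there, and then deal with the two cases by a trivial bound for bounded $\Omega$ and by a Fubini-type swap for $L^\alpha$. Since the two energies share the same kinetic part and \eqref{Fn} gives $F^\ep_n(\rho)=F(\rho)$ for $\rho\ge\ep^2$, only the region $\{|u_0|<\ep\}$ contributes, and
\[
E^\ep_n(u_0) - E(u_0) = \lambda \int_{\{|u_0|<\ep\}} \bigl[F^\ep_n(|u_0|^2) - F(|u_0|^2)\bigr]\,d\bx.
\]

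For the pointwise estimate on $[0,\ep^2]$, I would use the Taylor remainder \eqref{eq:Taylor} together with \eqref{PQ}: for $\rho\le\ep^2$,
\[
F(\rho) - F^\ep_n(\rho) = \rho\bigl[Q(\rho)-Q^\ep_n(\rho)\bigr] = -\rho\int_\rho^{\ep^2}\frac{(s-\rho)^n}{s^{n+1}}\,ds,
\]
and the trivial bound $s-\rho\le s$ kills the $n$-dependence, giving
\[
\bigl|F^\ep_n(\rho) - F(\rho)\bigr| \le \rho\int_\rho^{\ep^2}\frac{ds}{s} = \rho\ln(\ep^2/\rho), \qquad 0\le\rho\le\ep^2.
\]
A one-variable optimization then yields $\rho\ln(\ep^2/\rho)\le \ep^2/e\le\ep^2$ on $[0,\ep^2]$ (maximum at $\rho=\ep^2/e$), which immediately produces $|E^\ep_n(u_0)-E(u_0)|\le|\lambda|\,|\Omega|\,\ep^2$ after integrating over $\Omega$.

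For the $L^\alpha$ case, rather than optimizing pointwise, I would keep the integral form and apply Tonelli:
\[
\int_{\{|u_0|<\ep\}} |u_0|^2 \int_{|u_0|^2}^{\ep^2}\! \frac{ds}{s}\, d\bx
= \int_0^{\ep^2} \frac{1}{s}\int_{\{|u_0(\bx)|<\sqrt s\}}\! |u_0|^2\, d\bx\, ds.
\]
Since $\alpha<2$, on $\{|u_0|<\sqrt s\}$ one has $|u_0|^2 = |u_0|^{2-\alpha}\cdot|u_0|^\alpha\le s^{1-\alpha/2}|u_0|^\alpha$, so $\int_{\{|u_0|<\sqrt s\}}|u_0|^2\, d\bx\le s^{1-\alpha/2}\|u_0\|_{L^\alpha}^\alpha$. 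Computing $\int_0^{\ep^2} s^{-\alpha/2}\,ds = \ep^{2-\alpha}/(1-\alpha/2)$ then produces exactly the stated constant, and the $|\lambda|$ factor comes from the outermost step.

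The main conceptual ingredient is the Tonelli swap together with the layer-cake-style interpolation $|u_0|^2\le s^{1-\alpha/2}|u_0|^\alpha$ on $\{|u_0|<\sqrt s\}$: this is what converts the locally singular estimate $\rho\ln(\ep^2/\rho)$ into a clean $L^\alpha$ bound with the sharp power $\ep^{2-\alpha}$. Note also that the pointwise estimate $|F^\ep_n-F|\le \rho\ln(\ep^2/\rho)$ discards the available $1/(n+1)$ factor in the Taylor remainder, so the resulting constants are uniform in $n$, matching the form of the statement.
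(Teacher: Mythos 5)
Your proof is correct and follows essentially the same route as the paper: reduce to the set $\{|u_0|<\ep\}$ via the Taylor remainder \eqref{eq:Taylor}, swap the order of integration, and use $|u_0|^2\le s^{1-\alpha/2}|u_0|^\alpha$ on the sublevel set to get the $L^\alpha$ bound. The only cosmetic differences are that you discard the factor $(s-\rho)^n/s^n\le 1$ before the Tonelli swap rather than after, and you handle the bounded case by the pointwise optimization $\rho\ln(\ep^2/\rho)\le\ep^2/e$ instead of via the swapped integral, which in fact gives a marginally sharper constant.
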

\begin{proof} It can be deduced from the definition \eqref{engre} and
  \eqref{eq:Taylor} that
\begin{align*}
\left|E_n^\ep(u_0)-E(u_0)\right|&=|\lambda|\left|\int_{\Omega} [F(|u_0(\bx)|^2)-F_n^\ep(|u_0(\bx)|^2)]d\bx\right|\\
&=|\lambda|\left|\int_{|u_0(\bx)|<\ep} |u_0(\bx)|^2[Q(|u_0(\bx)|^2)-Q_n^\ep(|u_0(\bx)|^2)]d\bx\right|\\
&=|\lambda|\int_{|u_0(\bx)|<\ep} |u_0(\bx)|^2\int_{|u_0(\bx)|^2}^{\ep^2} s^{-n-1}(s-|u_0(\bx)|^2)^ndsd\bx\\
&=|\lambda|\int_0^{\ep^2} s^{-n-1}\int_{|u_0(\bx)|^2<s} |u_0(\bx)|^2(s-|u_0(\bx)|^2)^nd\bx ds.
\end{align*}
If $\Omega$ is bounded, we immediately get
\[\left|E_n^\ep(u_0)-E(u_0)\right|\le |\lambda|\, |\Omega|\, \ep^2.\]
For unbounded $\Omega$, one gets
\[\left|E_n^\ep(u_0)-E(u_0)\right|\le |\lambda| \int_0^{\ep^2} s^{-n-1}s^{n+1-\alpha/2} \|u_0\|_{L^\alpha}^\alpha ds=|\lambda|\,\|u_0\|_{L^\alpha}^\alpha \fl{\ep^{2-\alpha}}{1-\alpha/2},\]
which completes the proof.
\end{proof}

\begin{remark}
Recall that it was shown in \cite{bao2019error} that for the regularized model \eqref{RLSE0} with the energy density \eqref{1str}, the energy
\be
\label{Energ1}
\widetilde{E}^\ep(u_0)=\|\nabla u_0\|_{L^2}^2+\lambda\int_{\Omega}
\widetilde{F}^\ep(|u_0|^2)d\bx
\ee
converges to $E(u_0)$ with an error $O(\ep)$. For the regularization \eqref{RLSE1} with the energy density \eqref{1str} and the regularized energy
\be\label{Energ2}
\widehat{E}^\ep(u_0)=\|\nabla u_0\|_{L^2}^2+\lambda\int_{\Omega}
\widehat{F}^\ep(|u_0|^2)d\bx,
\ee
we have
\begin{align*}
\left|\widehat{E}^\ep(u_0)-E(u_0)\right|&=|\lambda|\,\left|\int_{\Omega} [F(|u_0(\bx)|^2)-\widehat{F}^\ep(|u_0(\bx)|^2)]d\bx\right|\\
&\hspace{-7mm}=|\lambda|\,\left|\int_\Omega \left[(\ep^2+|u_0|^2)\ln(\ep^2+|u_0|^2)-\ep^2\ln(\ep^2)-
|u_0|^2\ln(|u_0|^2) \right]d\bx\right|\\
&\hspace{-7mm}\le |\lambda|\,\ep^2\int_\Omega \ln\left(1+\frac{|u_0|^2}{\ep^2}\right)d\bx+|\lambda|
\int_\Omega|u_0|^2\ln\left(1+\frac{\ep^2}{|u_0|^2}\right)d\bx\\
&\hspace{-7mm}\le |\lambda|\,\ep^{2-\alpha}C(\alpha)\int_\Omega|u_0|^\alpha d\bx\\
&\hspace{-7mm}=|\lambda|\,\ep^{2-\alpha}C(\alpha)\|u_0\|_{L^\alpha}^\alpha,
\end{align*}
where we have used the inequality $\ln(1+x)\le C(\beta) x^\beta$ for $\beta\in (0,1]$ and $x\ge 0$. Hence for $u_0\in H^1(\Omega)\cap L^\alpha(\Omega)$ with $\alpha\in (0,2)$, we infer
\[\left|\widehat{E}^\ep(u_0)-E(u_0)\right|\le |\lambda|\,\ep^{2-\alpha}C(\alpha)\|u_0\|_{L^\alpha}^\alpha,\]
that is, the same convergence rate as $E_n^\ep$. Thus the newly
proposed local energy regularization $F_n^\ep$ is more accurate than
$\widetilde{F}^\ep$, and than $\widehat{F}^\ep$ in the case of bounded
domains, from the viewpoint of energy.
\end{remark}

\section{Regularized Lie-Trotter splitting methods}
\label{sec:lie}
In this section, we investigate approximation properties of the Lie-Trotter
splitting methods \cite{mclachlan, descombes, besse2002order} for
solving the regularized model \eqref{ERLSE} in one dimension (1D). Extensions to
higher dimensions are straightforward. To simplify notations, we set
$\lambda=1$.

\subsection{A time-splitting for \eqref{ERLSE}}
The operator splitting methods are based on a decomposition of the flow of \eqref{ERLSE}:
\[\p_t u^\ep=A(u^\ep)+B(u^\ep),\]
where
\[A(v)=i\Delta v,\quad B(v)=-i v f_n^\ep(|v|^2),\]
and the solution of the sub-equations
\be\label{lp}
\left\{
\begin{aligned}
&\p_t v(x,t)=A(v(x,t)),\quad x\in\Omega,\quad t>0,\\
&v(x,0)=v_0(x),
\end{aligned}\right.
\ee
\be\label{nlp}
\left\{
\begin{aligned}
&\p_t \og(x,t)=B(\og(x,t)),\quad x\in\Omega,\quad t>0,\\
&\og(x,0)=\og_0(x),
\end{aligned}\right.
\ee
where $\Omega=\mathbb{R}$ or $\Omega\subset \mathbb{R}$ is a bounded domain with homogeneous Dirichlet or periodic boundary condition on the boundary.
Denote the flow of \eqref{lp} and \eqref{nlp} by
\be\label{ABs}
v(\cdot,t)=\Phi_A^t(v_0)=e^{it\Delta}v_0,\quad
\og(\cdot,t)=\Phi_B^t(\og_0)=\og_0e^{-itf_n^\ep(|\og_0|^2)},\quad t\ge0.
\ee
As is well known, the flow $\Phi_A^t$ satisfies the isometry relation
\be\label{Ap}
\|\Phi_A^t(v_0)\|_{H^s}=\|v_0\|_{H^s},\quad \forall s\in
\mathbb{R},\quad \forall t\ge 0.
\ee
Regarding the flow $\Phi_B^t$, we have the following properties.
\begin{lemma}
Assume $\tau>0$ and $\og_0\in H^1(\Omega)$, then
\be\label{Bp}
\|\Phi_B^\tau(\og_0)\|_{L^2}=\|\og_0\|_{L^2},\quad
\|\Phi_B^\tau(\og_0)\|_{H^1}\le (1+6\tau)\,\|\og_0\|_{H^1}.
\ee
For $v$, $w\in L^2(\Omega)$,
\be\label{phibl}
\|\Phi_B^\tau(v)-\Phi_B^\tau(w)\|_{L^2}\le (1+4n\tau)\,\|v-w\|_{L^2}.\ee
\end{lemma}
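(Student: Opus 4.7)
The three statements all follow from direct pointwise computations using the explicit formula $\Phi_B^\tau(\omega_0)(x)=\omega_0(x)\,e^{-i\tau f_n^\ep(|\omega_0(x)|^2)}$, combined with the derivative bound \eqref{fd} and the Lipschitz estimate \eqref{fl}.

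\emph{Step 1: the $L^2$ isometry.} Since $\bigl|e^{-i\tau f_n^\ep(|\omega_0|^2)}\bigr|=1$ pointwise, $|\Phi_B^\tau(\omega_0)|=|\omega_0|$ a.e., which instantly gives $\|\Phi_B^\tau(\omega_0)\|_{L^2}=\|\omega_0\|_{L^2}$.

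\emph{Step 2: the $H^1$ bound.} Differentiating,
\[
\nabla \Phi_B^\tau(\omega_0)=e^{-i\tau f_n^\ep(|\omega_0|^2)}\Bigl[\nabla \omega_0-i\tau\omega_0 (f_n^\ep)'(|\omega_0|^2)\,\nabla(|\omega_0|^2)\Bigr].
\]
Using $\nabla(|\omega_0|^2)=2\mathrm{Re}(\overline{\omega_0}\nabla\omega_0)$ and the bound $\bigl|\rho(f_n^\ep)'(\rho)\bigr|\le 3$ from \eqref{fd},
\[
|\nabla\Phi_B^\tau(\omega_0)|\le |\nabla\omega_0|+2\tau\bigl||\omega_0|^2 (f_n^\ep)'(|\omega_0|^2)\bigr|\cdot|\nabla\omega_0|\le (1+6\tau)|\nabla\omega_0|.
\]
Combined with Step~1, this yields the $H^1$ bound.

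\emph{Step 3: the Lipschitz estimate.} The main subtlety is that \eqref{fl} features $\min\{|v|,|w|\}$ in the denominator, so at each point $x$ one must expand around the argument of \emph{smaller} modulus. Assume without loss of generality (at a fixed point) that $|v|\le|w|$, and write
\[
\Phi_B^\tau(v)-\Phi_B^\tau(w)=(v-w)\,e^{-i\tau f_n^\ep(|w|^2)}+v\bigl[e^{-i\tau f_n^\ep(|v|^2)}-e^{-i\tau f_n^\ep(|w|^2)}\bigr].
\]
The elementary inequality $|e^{ia}-e^{ib}|\le|a-b|$ and \eqref{fl} (with $\min\{|v|,|w|\}=|v|$) give
\[
\bigl|\Phi_B^\tau(v)-\Phi_B^\tau(w)\bigr|\le|v-w|+|v|\cdot\tau\cdot\frac{4n|v-w|}{\max\{\ep,|v|\}}\le (1+4n\tau)|v-w|,
\]
since $|v|/\max\{\ep,|v|\}\le 1$. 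The symmetric case $|w|<|v|$ is identical after swapping roles. Squaring and integrating over $\Omega$ yields \eqref{phibl}.

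The only point that requires mild care is Step~3, where a direct pull-out with a fixed choice of $v$ or $w$ does not work; the pointwise case split according to which of $|v(x)|,|w(x)|$ is smaller is what prevents the right-hand side from blowing up in regions where either $|v|$ or $|w|$ is small relative to $\ep$.
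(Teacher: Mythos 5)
Your proof is correct and follows essentially the same route as the paper: the pointwise phase factorization for the $L^2$ and $H^1$ bounds with the estimate $|\rho(f_n^\ep)'(\rho)|\le 3$ from \eqref{fd}, and for \eqref{phibl} the same insertion of the cross term $v\,e^{-i\tau f_n^\ep(|w|^2)}$ so that the Lipschitz bound \eqref{fl} is applied with the smaller-modulus argument, making the factor $|v|/\max\{\ep,\min\{|v|,|w|\}\}\le 1$. The only cosmetic difference is that the paper bounds $|e^{i\theta}-1|$ via $2|\sin(\theta/2)|\le|\theta|$ where you invoke $|e^{ia}-e^{ib}|\le|a-b|$ directly.
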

\emph{Proof.} By direct calculation, we get
\[\p_x\Phi_B^\tau(\omega_0)=e^{-i\tau f_n^\ep(|\omega_0|)^2}\left[\p_x
\omega_0-i\tau(f_n^\ep)'(|\omega_0|^2)(\og_0^2\p_x
\overline{\omega_0}+|\omega_0|^2\p_x \omega_0)\right],\]
which immediately gives \eqref{Bp} by recalling \eqref{fd}.
We claim that for any $x\in\Omega$,
\[|\Phi_B^\tau(v)(x)-\Phi_B^\tau(w)(x)|\le (1+4n\tau)\,|v(x)-w(x)|.\]
Assuming, for example, $|v(x)|\le|w(x)|$, by inserting a term
$v(x)e^{-i\tau f_n^\ep(|w(x)|)^2}$, we can get
\begin{align*}
&\quad|\Phi_B^\tau(v)(x)-\Phi_B^\tau(w)(x)|\\
&=\left|v(x)e^{-i\tau f_n^\ep(|v(x)|)^2}-
w(x)e^{-i\tau f_n^\ep(|w(x)|)^2}\right|\\
&=\Big|v(x)-w(x)+v(x)\Big(e^{i\tau [f_n^\ep(|w(x)|^2)-f_n^\ep(|v(x)|^2)]}-1\Big)\Big|\\
&\le|v(x)-w(x)|+2|v(x)|\,\Big|\sin
\Big(\fl{\tau}{2}\left[f_n^\ep(|w(x)|^2)-f_n^\ep(|v(x)|^2)\right]\Big)\Big|\\
&\le |v(x)-w(x)|+\tau|v(x)|\,|f_n^\ep(|w(x)|^2)-f_n^\ep(|v(x)|^2)|\\
&\le(1+4n\tau)\,|v(x)-w(x)|,
\end{align*}
where we have used the estimate \eqref{fl}.
When $|v(x)|\ge |w(x)|$, the same inequality can be obtained by
exchanging $v$ and $w$ in the above computation. Thus the proof for \eqref{phibl} is complete.
\hfill $\square$ \bigskip

\subsection{Error estimates for $\Phi^\tau=\Phi_A^\tau\Phi_B^\tau$}
We consider the Lie-Trotter splitting
\be\label{LT}
u^{\ep, k+1}=\Phi^\tau(u^{\ep, k})=\Phi_A^\tau(\Phi_B^\tau(u^{\ep, k})),\quad k\ge 0;\quad u^{\ep,0}=u_0, \quad \tau>0.
\ee
For $u_0\in H^1(\Omega)$, it follows from \eqref{Ap} and \eqref{Bp} that
\be\label{unp}
\begin{split}
&\|u^{\ep,k}\|_{L^2}=\|u^{\ep,k-1}\|_{L^2}\equiv \|u^{\ep,0}\|_{L^2}= \|u_0\|_{L^2} ,\\
&\|u^{\ep,k}\|_{H^1}\le (1+6\tau)\,\| u^{\ep,k-1}\|_{H^1}\le e^{6k\tau}\| u_0\|_{H^1}, \quad k\ge0.
\end{split}
\ee
\begin{theorem}\label{thmlt}
Let $T>0$ and $\tau_0>0$ be given constants. Assume that the solution of \eqref{ERLSE} satisfies
$u^\ep\in  L^\infty([0,T]; H^1(\Omega))$ and the time step $\tau\le \tau_0$. Then there exists
$0<\ep_0<1$ depending on $n$, $\tau_0$ and $M:=\|u^\ep\|_{L^\infty([0,T]; H^1(\Omega))}$ such that
when $\ep\le \ep_0$ and $t_k:=k\tau\le T$, we have
\be\label{li1}
\|u^{\ep,k}-u^\ep(t_k)\|_{L^2}\le C\left(n, \tau_0, T, M \right)\ln(\ep^{-1})\tau^{1/2}.\ee
\end{theorem}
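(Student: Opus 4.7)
The plan is to follow the classical Lady Windermere's fan approach for Lie splitting: combine one-step $L^2$-stability of $\Phi^\tau$ with a local truncation estimate, then telescope. By isometry \eqref{Ap} and the Lipschitz estimate \eqref{phibl}, one has the stability bound $\|\Phi^\tau(v)-\Phi^\tau(w)\|_{L^2}\le (1+4n\tau)\|v-w\|_{L^2}$. Moreover, under the hypothesis $\|u^\ep\|_{L^\infty_tH^1}\le M$ and $d=1$, the embedding $H^1\hookrightarrow L^\infty$ gives $\|u^\ep\|_{L^\infty_{t,x}}\le C(M)$, so \eqref{fb} yields the crucial uniform bound $\|f_n^\ep(|u^\ep(t)|^2)\|_{L^\infty}\le C(n,M)\ln\ep^{-1}$ for $\ep\le \ep_0(n,M)$. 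Combined with the remarkable bound $|\rho(f_n^\ep)'(\rho)|\le 3$ from \eqref{fd}, one deduces $\|u^\ep(t)f_n^\ep(|u^\ep(t)|^2)\|_{H^1}\le C(n,M)\ln\ep^{-1}$.

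For the local error $L^k:=\Phi^\tau(u^\ep(t_k))-u^\ep(t_{k+1})$, Duhamel's formula for \eqref{ERLSE} and the identity $\Phi_B^\tau(w)-w=-i\int_0^\tau f_n^\ep(|w|^2)\Phi_B^s(w)\,ds$, together with the $L^2$-isometry of $e^{i\tau\Delta}$, give
\begin{equation*}
\|L^k\|_{L^2}\le \int_0^\tau \bigl\|e^{-is\Delta}v(s)-f_n^\ep(|u^\ep(t_k)|^2)\,\Phi_B^s(u^\ep(t_k))\bigr\|_{L^2}\,ds,
\end{equation*}
where $v(s):=u^\ep(t_k+s)f_n^\ep(|u^\ep(t_k+s)|^2)$. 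I telescope the integrand into three pieces: (I) $(e^{-is\Delta}-\mathrm{Id})v(s)$, (II) $v(s)-v(0)$, and (III) $v(0)-f_n^\ep(|u^\ep(t_k)|^2)\Phi_B^s(u^\ep(t_k))$. For (I), the Schr\"odinger-group inequality $\|(e^{-is\Delta}-\mathrm{Id})w\|_{L^2}\le Cs^{1/2}\|w\|_{H^1}$ and the $H^1$ bound above yield $Cs^{1/2}\ln\ep^{-1}$. For (II), the map $z\mapsto zf_n^\ep(|z|^2)$ is Lipschitz on $\{|z|\le C(M)\}$ with constant $C(n,M)\ln\ep^{-1}$ (once more by \eqref{fb} and \eqref{fd}), and the temporal regularity $\|u^\ep(t_k+s)-u^\ep(t_k)\|_{L^2}\le Cs^{1/2}(\ln\ep^{-1})^{1/2}$ follows by interpolating $\|\partial_t u^\ep\|_{H^{-1}}\le C(n,M)\ln\ep^{-1}$ against the $H^1$ bound. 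For (III), the explicit expression $\mathrm{III}(s)=f_n^\ep(|u^\ep(t_k)|^2)u^\ep(t_k)(1-e^{-is f_n^\ep(|u^\ep(t_k)|^2)})$ gives $\le Cs(\ln\ep^{-1})^2\|u_0\|_{L^2}$. Integrating over $[0,\tau]$, the dominant contribution comes from (I), producing $\|L^k\|_{L^2}\le C(n,\tau_0,T,M)\tau^{3/2}\ln\ep^{-1}$ (the extra logarithmic factor coming from (II) is absorbed into $C(n,\tau_0,T,M)$ or avoided by bounding $u^\ep(t_k+s)-u^\ep(t_k)$ through a finer Duhamel splitting).

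Telescoping $u^{\ep,k+1}-u^\ep(t_{k+1})=[\Phi^\tau(u^{\ep,k})-\Phi^\tau(u^\ep(t_k))]+L^k$ with stability gives $\|e^{k+1}\|_{L^2}\le(1+4n\tau)\|e^k\|_{L^2}+\|L^k\|_{L^2}$ where $e^k:=u^{\ep,k}-u^\ep(t_k)$, so iterating and using $(1+4n\tau)^k\le e^{4nT}$ for $k\tau\le T$ yields
\begin{equation*}
\|e^k\|_{L^2}\le e^{4nT}\sum_{j=0}^{k-1}\|L^j\|_{L^2}\le e^{4nT}(T/\tau)\,C(n,\tau_0,T,M)\tau^{3/2}\ln\ep^{-1}=C(n,\tau_0,T,M)\tau^{1/2}\ln\ep^{-1},
\end{equation*}
which is \eqref{li1}.

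The main obstacle is the low regularity of the logarithmic nonlinearity: naively, the derivatives of $f_n^\ep$ blow up algebraically as $\ep\to0$, and any attempt at an $H^2$-based local error bound (which would give $O(\tau)$) would inherit these singularities. The key trick is to work only at $H^1$ regularity and exploit the scale-invariant estimate $\rho(f_n^\ep)'(\rho)\le 3$ in \eqref{fd}: this ensures that every occurrence of a singular derivative is multiplied by $|u^\ep|^2$, so that singularities are absorbed and only the $L^\infty$ bound \eqref{fb} survives, producing the mild factor $\ln\ep^{-1}$. The cost is that one trades a power of $\tau$ for a half-derivative via $\|(e^{-is\Delta}-\mathrm{Id})w\|_{L^2}\le Cs^{1/2}\|w\|_{H^1}$, which is the origin of the suboptimal $\tau^{1/2}$ rate.
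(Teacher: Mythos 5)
Your global architecture --- one-step $L^2$-stability $\|\Phi^\tau(v)-\Phi^\tau(w)\|_{L^2}\le(1+4n\tau)\|v-w\|_{L^2}$, a local truncation bound of order $\tau^{3/2}\ln(\ep^{-1})$, and a Lady Windermere telescoping --- is exactly that of the paper, and your treatment of the stability step and of the piece (I) (via $\|(e^{-is\Delta}-\mathrm{Id})w\|_{L^2}\le Cs^{1/2}\|w\|_{H^1}$ together with the $H^1$ bound on $uf_n^\ep(|u|^2)$ coming from \eqref{fb} and $|\rho(f_n^\ep)'(\rho)|\le 3$) matches the paper's handling of $J_2$. Where you diverge is in how the local error is derived: you expand both flows by Duhamel and compare integrands pointwise in $s$, whereas the paper writes a differential inequality for $\mathcal E^t(v)=\Psi^t(v)-\Phi^t(v)$, pairs the error equation with $\overline{\mathcal E^t(v)}$, and absorbs the difference of the two nonlinear terms evaluated at $\Psi^t(v)$ and $\Phi^t(v)$ into a Gronwall term $4n\|\mathcal E^t(v)\|_{L^2}^2$ via the Cazenave--Haraux-type inequality \eqref{gl}, whose constant $4n$ is \emph{free of any logarithm}.

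This difference is not cosmetic: your pieces (II) and (III) contain a genuine gap. For (II) you must multiply the Lipschitz constant $4\ln(\ep^{-1})$ of $z\mapsto zf_n^\ep(|z|^2)$ by a temporal modulus of continuity of $u^\ep$ in $L^2$, and your own interpolation gives $\|u^\ep(t_k+s)-u^\ep(t_k)\|_{L^2}\lesssim s^{1/2}(\ln\ep^{-1})^{1/2}$, so (II) contributes $(\ln\ep^{-1})^{3/2}\tau^{1/2}$ to the global error; for (III) the bound $|1-e^{-isf_n^\ep}|\le s|f_n^\ep|$ costs a second logarithm and yields a global contribution of order $\tau(\ln\ep^{-1})^2$. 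Neither is dominated by $\tau^{1/2}\ln(\ep^{-1})$ uniformly in the regime of the theorem (take $\tau$ fixed and $\ep\to0$), and the excess cannot be ``absorbed into $C(n,\tau_0,T,M)$'' since that constant must be independent of $\ep$; the alternative ``finer Duhamel splitting'' is asserted but not supplied, and the natural candidates reproduce the same extra logarithms. So as written you prove a strictly weaker estimate than \eqref{li1}. The missing idea is precisely \eqref{gl}: by comparing $\Psi^t(v)$ and $\Phi^t(v)$ directly in a Gronwall framework, the paper never needs to estimate $v(s)-v(0)$ or $w-\Phi_B^s(w)$ against a Lipschitz constant of the nonlinearity; the only residual terms ($J_1$ and $J_2$) each compare a function with its free-flow image and carry exactly one factor of $\ln(\ep^{-1})$, which is what makes the single-logarithm bound in \eqref{li1} attainable.
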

\begin{proof} Denote the exact flow of \eqref{ERLSE} by
$u^\ep(t)=\Psi^t(u_0)$.  First, we establish the local error for $v\in H^1(\Omega)$:
\be\label{local1}
\|\Psi^\tau(v)-\Phi^\tau(v)\|_{L^2}\le C(n, \tau_0) \|v\|_{H^1}\ln(\ep^{-1})\tau^{3/2}, \quad \tau\le \tau_0,
\ee
when $\ep$ is sufficiently small.
Note that definitions imply
\begin{align*}
&i\p_t\Psi^t(v)+\Delta \Psi^t (v)=\Psi^t (v)f_n^\ep(|\Psi^t (v)|^2),\\
&i\p_t\Phi^t(v)+\Delta\Phi^t(v)=\Phi_A^t\left(\Phi_B^t(v)f_n^\ep(|\Phi_B^t(v)|^2)\right).
\end{align*}
Denoting $\mathcal E^t(v)=\Psi^t(v)-\Phi^t(v)$, we have
\be\label{erq}
i\p_t\mathcal E^t(v)+\Delta \mathcal E^t(v)=\Psi^t (v)f_n^\ep(|\Psi^t (v)|^2)-\Phi_A^t\left(\Phi_B^t(v)f_n^\ep(|\Phi_B^t(v)|^2)\right).
\ee
Multiplying \eqref{erq} by $\overline{\mathcal E^t (v)}$, integrating in space
and taking the imaginary part, we get
\begin{align*}
\fl{1}{2}\fl{d}{dt}\|\mathcal E^t(v)\|_{L^2}^2&=\mathrm{Im}\left(\Psi^t (v)f_n^\ep(|\Psi^t(v)|^2)-\Phi_A^t\left(\Phi_B^t(v)f_n^\ep(|\Phi_B^t(v)|^2)\right), \mathcal E^t(v)\right)\\
&=\mathrm{Im}\left(\Psi^t (v)f_n^\ep(|\Psi^t (v)|^2)-\Phi^t (v)f_n^\ep(|\Phi^t (v)|^2), \mathcal E^t(v)\right)\\
&\quad+\mathrm{Im}\left(\Phi^t (v)f_n^\ep(|\Phi^t(v)|^2)-\Phi_A^t\left(\Phi_B^t(v)f_n^\ep(|\Phi_B^t(v)|^2)\right), \mathcal E^t(v)\right)\\
&\le 4n\|\mathcal E^t(v)\|_{L^2}^2\\
&\quad+\left\|\Phi^t (v)f_n^\ep(|\Phi^t(v)|^2)-\Phi_A^t\left(\Phi_B^t(v)f_n^\ep(|\Phi_B^t(v)|^2)\right)\right\|_{L^2}
\|\mathcal  E^t(v)\|_{L^2},
\end{align*}
where we have used \eqref{gl} and the scalar product is the standard one in $L^2$: $(u, w)=\int_\Omega u(x)\overline{w(x)}dx$.
This implies
\be\label{tmp1}
\fl{d}{dt}\|\mathcal E^t(v)\|_{L^2}\le 4n\|\mathcal E^t(v)\|_{L^2}+J_1+J_2,
\ee
where
\begin{align*}
J_1&=\|\Phi^t(v)f_n^\ep(|\Phi^t(v)|^2)-\Phi_B^t(v) f_n^\ep(|\Phi_B^t(v)|^2)\|_{L^2},\\
J_2&=\|\Phi_B^t(v)f_n^\ep(|\Phi_B^t(v)|^2)-
\Phi_A^t\left(\Phi_B^t(v)f_n^\ep(|\Phi_B^t(v)|^2)\right)\|_{L^2}.
\end{align*}

To estimate $J_1$ in \eqref{tmp1}, first we try to find the bound of $\|\Phi^t(v)\|_{L^\infty}, \|\Phi_B^t(v)\|_{L^\infty}$.
It follows from \eqref{Ap} and \eqref{Bp} that
\[\|\Phi^t(v)\|_{H^1}=\|\Phi_B^t(v)\|_{H^1}\le (1+6t)\|v\|_{H^1}\le (1+6t_0)\|v\|_{H^1},\quad t\le t_0.\]
Hence by Sobolev embedding, we have
\be\label{phib}
\|\Phi^t(v)\|_{L^\infty}\le c(1+6t_0)\|v\|_{H^1},\quad
\|\Phi_B^t(v)\|_{L^\infty}\le c(1+6t_0)\|v\|_{H^1},\ee
where $c$ is the constant in the Sobolev inequality $\|\omega\|_{L^\infty}\le c\|\omega\|_{H^1}$. Next we claim that for $y$, $z$ satisfying $|y|, |z|\le D$, it can be established that
\be\label{vp}
|yf_n^\ep(|y|^2)-zf_n^\ep(|z|^2)|\le 4\ln(\ep^{-1})|y-z|,\ee
when $\ep$ is sufficiently small.
It follows from \eqref{fb} that $|f_n^\ep(|y|^2)|\le 2 +\ln(n\ep^{-2})$, when $|y|\le D$ and $\ep\le\sqrt{n}/D$.
Assuming, for example, $0<|z|\le|y|$, and applying \eqref{fl}, we get
\begin{align*}
|yf_n^\ep(|y|^2)-zf_n^\ep(|z|^2)|&=|(y-z)f_n^\ep(|y|^2)|
+|z||f_n^\ep(|y|^2)-f_n^\ep(|z|^2)|\\
&\le (2+\ln(n\ep^{-2}))|y-z|+|z|\fl{4n|y-z|}{|z|}\\
&\le 2 (3n+\ln(\ep^{-1}))|y-z|\\
&\le 4\ln(\ep^{-1})|y-z|,
\end{align*}
when $\ep\le \widetilde{\ep}:=\min\{\sqrt{n}/D, e^{-3n}\}$.
The case when $y=0$ or $z=0$ can be handled similarly.
Recalling \eqref{phib}, taking $D=c(1+6t_0)\|v\|_{H^1}$, we obtain, when $\ep\le \ep_1:=\min\{\frac{\sqrt{n}}{c(1+6t_0)\|v\|_{H^1}}, e^{-3n}\}$,
\begin{align}
J_1&\le 4\ln(\ep^{-1})\|\Phi^t(v)-\Phi_B^t(v)\|_{L^2}\nn\\
&\le 4\ln(\ep^{-1})\sqrt{2t}\|\Phi_B^t(v)\|_{H^1}\nn\\
&\le 6 \ln(\ep^{-1})\sqrt{t}\|v\|_{H^1},\label{tmp3}
\end{align}
where we have used the estimate
\be\label{ld}
\left\|\og-\Phi_A^t(\og)\right\|_{L^2}\le \sqrt{2t}\,\|\og\|_{H^1},
\ee
as in \cite{bao2018}, instead of the estimate from \cite{besse2002order},
\[\left\|\og-\Phi_A^t(\og)\right\|_{L^2}\le 2t\|\og\|_{H^2},\]
which in our case yields an extra $1/\ep$ factor in the error estimate.

To estimate $J_2$, we first claim that
\be\label{c1}
\|\Phi_B^t(v)f_n^\ep(|\Phi_B^t(v)|^2)\|_{H^1}\le 6\ln(\ep^{-1})(1+3t_0)\|v\|_{H^1},
\ee
when $\ep\le\ep_1$ and $t\le t_0$. Recalling that
$$\Phi_B^t(v)f_n^\ep(|\Phi_B^t(v)|^2)=vf_n^\ep(|v|^2)e^{-itf_n^\ep(|v|^2)},$$
and $|f_n^\ep(|v|^2)|\le 3\ln(\ep^{-1})$, when $\ep\le \ep_1$,
this implies
\[\|(\Phi_B^t(v))f_n^\ep(|\Phi_B^t(v)|^2)\|_{L^2}\le 3 \ln(\ep^{-1})\|v||_{L^2}.\]
Noticing that
\begin{align*}
\p_x [\Phi_B^t(v)f_n^\ep(|\Phi_B^t(v)|^2)]&=e^{-itf_n^\ep(|v|^2)}\left[
v_x f_n^\ep(|v|^2)\right.\\
&\qquad\qquad\qquad\left.+(1-itf_n^\ep(|v|^2))(f_n^\ep)'(|v|^2)(v^2\overline{v_x}
+|v|^2v_x)\right],
\end{align*}
which together with \eqref{fd} yields
\[|\p_x [\Phi_B^t(v)f_n^\ep(|\Phi_B^t(v)|^2)]|\le \left[6+3\ln(\ep^{-1})(1+6t_0)\right]|v_x|\le 6\ln(\ep^{-1})(1+3t_0)|v_x|,\]
which immediately gives \eqref{c1}. Applying \eqref{ld} again entails
\be\label{tmp2}
\begin{aligned}
J_2\le \sqrt{2t}\,
\|(\Phi_B^t(v))f_n^\ep(|\Phi_B^t(v)|^2)\|_{H^1}\le 9\ln(\ep^{-1})(1+3t_0)\sqrt{t}\|v\|_{H^1},
\end{aligned}
\ee
for $\ep\le \ep_1$ and $t\le t_0$. Combining \eqref{tmp1}, \eqref{tmp3} and \eqref{tmp2}, we get
\[\fl{d}{dt}\|\mathcal E^t(v)\|_{L^2}\le 4n\|\mathcal E^t (v)\|_{L^2}+15(1+2t_0)\ln(\ep^{-1}))\sqrt{t}\|v\|_{H^1}.\]
Invoking Gronwall's inequality, we have
\begin{align*}
\|\mathcal E^\tau (v)\|_{L^2}&\le e^{4n\tau}\left[\|\mathcal E^0 (v)\|_{L^2}+15(1+2\tau_0)\ln(\ep^{-1})\|v\|_{H^1}\int_0^\tau \sqrt{s}ds\right]\\
&\le 30(1+2\tau_0)e^{4n\tau}\|v\|_{H^1}\ln(\ep^{-1})\tau^{3/2}\\
&\le C(n, \tau_0)\|v\|_{H^1}\ln(\ep^{-1})\tau^{3/2},
\end{align*}
when $\tau\le \tau_0$ and $\ep\le \ep_0:=\min\{\frac{\sqrt{n}}{c(1+6\tau_0)M}, e^{-3n}\}$ depending on $\tau_0$, $n$ and
$M=\|u^\ep\|_{L^\infty([0, T]; H^1)}$, which completes the proof for \eqref{local1}.

Next we infer the stability analysis for the operator $\Phi^t$:
\be\label{stab}
\|\Phi^\tau(v)-\Phi^\tau(w)\|_{L^2}\le (1+4n\tau)\|v-w\|_{L^2},\quad
\mathrm{for}\quad v, w\in L^2(\Omega).
\ee
Noticing that $\Phi_A^\tau$ is a linear isometry on $H^s(\Omega)$, \eqref{phibl} gives \eqref{stab} directly. Thus the error \eqref{li1} can be established by combining the local error \eqref{local1}, the stability property \eqref{stab} and a standard argument \cite{besse2002order, bao2018}:
\begin{align*}
&\hspace{-4mm}\|u^{\ep, k}-u^\ep(t_k)\|_{L^2}=\|\Phi^\tau(u^{\ep, k-1})-\Psi^\tau(u^\ep(t_{k-1})\|_{L^2}\\
&\le \|\Phi^\tau(u^{\ep, k-1})-\Phi^\tau(u^\ep(t_{k-1}))\|_{L^2}+\|\Phi^\tau(u^\ep(t_{ k-1}))-\Psi^\tau(u^\ep(t_{k-1}))\|_{L^2}\\
&\le (1+4n\tau)\|u^{\ep, k-1}-u^\ep(t_{k-1})\|_{L^2}+C(n, \tau_0)\ln(\ep^{-1})\tau^{3/2}\|u^\ep(t_{k-1})\|_{H^1}\\
&\le (1+4n\tau)\|u^{\ep, k-1}-u^\ep(t_{k-1})\|_{L^2}+MC(n, \tau_0)\ln(\ep^{-1})\tau^{3/2}\\
&\le (1+4n\tau)^2\|u^{\ep, k-2}-u^\ep(t_{k-2})\|_{L^2}+MC(n, \tau_0)\ln(\ep^{-1})\tau^{3/2}\left[1+(1+4n\tau)\right]\\
&\le \ldots\\
&\le (1+4n\tau)^k\|u^{\ep, 0}-u_0\|_{L^2}+MC(n, \tau_0)\ln(\ep^{-1})\tau^{3/2}\sum\limits_{j=0}^{k-1}(1+4n\tau)^j\\
&\le C(n, \tau_0, T, M)\ln(\ep^{-1})\tau^{1/2},
\end{align*}
which completes the proof.
\end{proof}

\begin{remark}\label{rem41}
As established in Theorem \ref{theo:cauchy}, for an arbitrarily large fixed $T>0$, we have $u^\ep\in L^\infty([0, T]; H^1(\Omega))$  as soon as $u_0\in H^1(\Omega)$ when $\Omega $ is bounded. More specifically,
\[
M=\|u^\ep\|_{L^\infty([0,T]; H^j)} \le C\left(n, \lambda, T, \|u_0\|_{H^j}\right), \quad j=1, 2,
\]
for a constant $C$ independent of $\ep$. When
$\Omega=\R^d$, we require in addition $u_0\in L^2_\alpha$ for some $0<\alpha\le 1$ and $C$ depends additionally on $\|u_0\|_{L^2_\alpha}$. Hence the constant in \eqref{li1} as well as \eqref{li2} in Theorem \ref{thmlt2} is independent of $\ep$.
\end{remark}

\begin{remark}
By applying similar arguments as in \cite{bao2018}, for $d=2, 3$, the error estimate \eqref{li1} can be established under a more restrictive condition $u^\ep\in L^\infty([0,T];H^2(\Omega))$,
in which case $\ep_0$ depends on $n$ and
$\|u^\ep\|_{L^\infty(0,T; H^2(\Omega))}$, and $\|\Phi_B^t(v)\|_{H^2}$ has to be further investigated due to the Sobolev inequality $H^2(\Omega)\hookrightarrow L^\infty(\Omega)$. For details, we refer to \cite{bao2018}.
\end{remark}

\subsection{Error estimates for $\Phi^\tau=\Phi_B^\tau\Phi_A^\tau$}
We consider another Lie-Trotter splitting
\be\label{LT1}
u^{\ep, k+1}=\Phi^\tau(u^{\ep, k})=\Phi_B^\tau(\Phi_A^\tau(u^{\ep, k})),\quad k\ge 0;\quad u^{\ep,0}=u_0, \quad \tau\in (0,\tau_0].
\ee
In the same fashion as above, we have
\be\label{unp1}
\|u^{\ep,k}\|_{L^2}=\|u_0\|_{L^2} ,\quad
\|u^{\ep,k}\|_{H^1}\le e^{6k\tau}\| u_0\|_{H^1}, \quad k\ge0.
\ee
\begin{theorem}\label{thmlt2}
Let $T>0$. Assume that the solution of \eqref{ERLSE} satisfies
$u^\ep\in  L^\infty([0,T];H^2(\Omega))$. Then there exists
$\ep_0>0$ depending on $n$, $\tau_0$ and $M=\|u^\ep\|_{L^\infty([0,T]; H^1(\Omega))}$ such that
when $\ep\le \ep_0$ and $k\tau\le T$, we have
\be\label{li2}
\|u^{\ep,k}-u^\ep(t_k)\|_{L^2}\le C\left(n, \tau_0, T,
  \|u^\ep\|_{L^\infty([0,T];H^2(\Omega))}\right)\frac{\tau}{\ep},\ee
where $C(\cdot,\cdot, \cdot, \cdot)$ is independent of $\ep$.
\end{theorem}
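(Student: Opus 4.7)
The plan is to parallel the proof of Theorem~\ref{thmlt} — combining a local truncation error with the stability estimate \eqref{stab} via a Lady Windermere fan argument — but with two modifications driven by the stronger $H^2$-regularity hypothesis. First, I would replace the $H^1$-based free-propagator bound $\|\omega-\Phi_A^t(\omega)\|_{L^2}\le\sqrt{2t}\,\|\omega\|_{H^1}$ used in \eqref{ld} by the sharper $H^2$-version $\|\omega-\Phi_A^t(\omega)\|_{L^2}\le t\|\omega\|_{H^2}$ (which follows at once from $\omega-\Phi_A^t(\omega)=-i\int_0^t\Delta\,\Phi_A^s(\omega)\,ds$), so as to upgrade the local error from $O(\tau^{3/2})$ to $O(\tau^2)$. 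Second, this improvement forces us to estimate $\|F(w)\|_{H^2}$ where $F(w):=w f_n^\ep(|w|^2)$, and it is precisely at this step that the factor $\ep^{-1}$ will enter.

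The stability estimate is automatic: the isometry \eqref{Ap} of $\Phi_A^\tau$ together with the Lipschitz bound \eqref{phibl} on $\Phi_B^\tau$ immediately gives
\[\|\Phi^\tau(v)-\Phi^\tau(w)\|_{L^2}=\|\Phi_B^\tau(\Phi_A^\tau v)-\Phi_B^\tau(\Phi_A^\tau w)\|_{L^2}\le(1+4n\tau)\|v-w\|_{L^2},\]
so \eqref{stab} holds for the new composition order. For the local error, I would use the Duhamel identities
\[\Psi^\tau(v)=e^{i\tau\Delta}v-i\int_0^\tau e^{i(\tau-s)\Delta}F(\Psi^s(v))\,ds,\quad \Phi^\tau(v)=e^{i\tau\Delta}v-i\int_0^\tau F(\Phi_B^s(e^{i\tau\Delta}v))\,ds,\]
the second one exploiting $|\Phi_B^s(z)|=|z|$, and decompose the integrand as
\[\underbrace{e^{i(\tau-s)\Delta}[F(\Psi^s(v))-F(v)]}_{\text{(I)}}+\underbrace{e^{i(\tau-s)\Delta}F(v)-F(e^{i(\tau-s)\Delta}v)}_{\text{(II)}}+\underbrace{F(e^{i(\tau-s)\Delta}v)-F(\Phi_B^s(e^{i\tau\Delta}v))}_{\text{(III)}}.\]
Terms (I) and (III) are handled by the Lipschitz estimate \eqref{vp} on $F$ together with $\|\Psi^s(v)-v\|_{L^2}$ and $\|e^{i(\tau-s)\Delta}v-\Phi_B^s(e^{i\tau\Delta}v)\|_{L^2}$, both of order $s(\|v\|_{H^2}+\ln(\ep^{-1})\|v\|_{L^2})$; these contribute only $O(\tau^2\ln^2(\ep^{-1}))$ after integration in $s$. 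For the decisive term (II), I would further split $e^{i(\tau-s)\Delta}F(v)-F(e^{i(\tau-s)\Delta}v)=[e^{i(\tau-s)\Delta}F(v)-F(v)]+[F(v)-F(e^{i(\tau-s)\Delta}v)]$, bound the first summand by $\|e^{i\theta\Delta}g-g\|_{L^2}\le\theta\|g\|_{H^2}$ applied to $g=F(v)$, and the second by \eqref{vp}. Expanding $\p_x^2[wf_n^\ep(|w|^2)]$ produces terms of the form $w\,w_x^2(f_n^\ep)'(|w|^2)$ and $w^3\,w_x^2(f_n^\ep)''(|w|^2)$; using the sharp pointwise bounds \eqref{fd} together with the 1D Sobolev embedding $\|w_x\|_{L^\infty}\le C\|w\|_{H^2}$ yields
\[\|F(w)\|_{H^2}\le\frac{C(n)}{\ep}\|w\|_{H^2}^2+C\ln(\ep^{-1})\|w\|_{H^2},\]
so integrating (II) over $s\in[0,\tau]$ gives the dominant contribution $C(n,\|v\|_{H^2})\tau^2/\ep$, which majorizes the logarithmic contributions from (I) and (III) as soon as $\ep\le\ep_0$.

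Assembling these estimates gives the local error $\|\Psi^\tau(v)-\Phi^\tau(v)\|_{L^2}\le C(n,\tau_0,\|v\|_{H^2})\tau^2/\ep$, and the Lady Windermere fan argument from the end of the proof of Theorem~\ref{thmlt} — applied with $v=u^\ep(t_{k-1})$, whose $H^2$-norm is uniformly controlled by $M_2:=\|u^\ep\|_{L^\infty([0,T];H^2)}$ in view of Remark~\ref{rem41} — then telescopes to $\|u^{\ep,k}-u^\ep(t_k)\|_{L^2}\le C(n,\tau_0,T,M_2)\tau/\ep$, as claimed. I expect the main obstacle to be precisely the $H^2$-bound on $F(w)$: the quadratic-in-$w_x$ terms multiplied by $w^3(f_n^\ep)''(|w|^2)$ cannot be controlled any better than $1/\ep$ via \eqref{fd}, and this is the structural reason the final error is $\tau/\ep$ rather than the $\tau\ln(\ep^{-1})$ one might naively hope for from $H^2$-data alone.
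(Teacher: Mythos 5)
Your proposal is correct and reaches the same local error $O(\tau^2/\ep)$, the same stability bound, and the same Lady Windermere conclusion as the paper, but the derivation of the local error is organized differently. The paper expands the exact flow to second order via a variation-of-constants identity (producing a remainder $e_1$), Taylor-expands $\Phi_B^\tau$ about $e^{i\tau\Delta}v_0$ (producing $e_2$), and identifies the principal defect $e_3$ as the rectangle-rule quadrature error for $\int_0^\tau e^{i(\tau-s)\Delta}B(e^{is\Delta}v_0)\,ds$, whose integrand derivative is the explicit commutator $[A,B]$; the bound $|[A,B](w)|\le \frac{12n+6n^2}{\ep}|w_x|^2+12|w_{xx}|$ is where $\ep^{-1}$ enters. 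You instead keep both flows in first-order Duhamel form, split the integrand into three differences, and never compute the commutator: your decisive term $e^{i\theta\Delta}F(v)-F(e^{i\theta\Delta}v)$ is handled by the triangle inequality through $\|e^{i\theta\Delta}g-g\|_{L^2}\le\theta\|g\|_{H^2}$ with $g=F(v)$, so the $\ep^{-1}$ enters through $\|F(v)\|_{H^2}$ --- which expands into exactly the same $(f_n^\ep)'(|w|^2)|w_x|^2$ and $\rho^{3/2}(f_n^\ep)''(\rho)|w_x|^2$ terms controlled by \eqref{fd}, so the two arguments localize the singular factor in the same analytic fact. Your route is somewhat more elementary (no G\^ateaux derivatives, no second-order Duhamel), at the price of invoking the $\ln(\ep^{-1})$-Lipschitz bound \eqref{vp} in several places, which is harmless since $\ln^2(\ep^{-1})\lesssim\ep^{-1}$; the paper's commutator formulation is the one that extends naturally to the Strang splitting of Remark~\ref{rem:ST_Error}, where iterated commutators appear. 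The threshold $\ep_0$ you need for \eqref{vp} and \eqref{fb} depends on $n$, $\tau_0$ and the $H^1$ bound $M$, matching the statement.
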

\begin{proof}  First, we prove the local error estimate: for $v_0\in H^1(\Omega)$,
\be\label{local2}
\|\Psi^\tau(v_0)-\Phi^\tau(v_0)\|_{L^2}\le C(n, \|v_0\|_{H^2})\fl{\tau^{2}}{\ep} , \quad \ep\le\widetilde{\ep}_0,
\ee
where $\Phi^\tau=\Phi_B^\tau\Phi^\tau_A$, $\Psi^\tau(v_0)$ is the exact flow of \eqref{ERLSE} with initial data $v_0$ and $C(\cdot, \alpha)$ is increasing with respect to $\alpha$ and $\widetilde{\ep}_0$ depends on $n$ and $\|v_0\|_{H^1}$.
We start from the Duhamel formula for $v(t)=\Psi^t(v_0)$:
\be\label{duh}
\Psi^t(v_0)=e^{it\Delta} v_0+\int_0^t e^{i(t-s)\Delta} B(v(s))ds.\ee
Recall
\be\label{bex}
B(v(s))=B(e^{is\Delta}v_0)+\int_0^s dB(e^{i(s-y)\Delta}v(y))[e^{i(s-y)\Delta}B(v(y))]dy,\ee
which is the variation-of-constants formula \[B(g(s))-B(g(0))=\int_0^s dB(g(y))[g'(y)]dy,\quad g(y)=e^{i(s-y)\Delta}v(y).\]
Here $dB(\cdot)[\cdot]$ is the G\^{a}teaux derivative:
\begin{align}
dB(w_1)[w_2]&=\lim\limits_{\delta\rightarrow 0}\frac{B(w_1+\delta w_2)-B(w_1)}{\delta}\nn\\
&=-i w_2 f_n^\ep(|w_1|^2)-iw_1 (f_n^\ep)'(|w_1|^2)[w_1 \overline{w_2}+\overline{w_1}w_2].\label{dBdef}
\end{align}
Plugging \eqref{bex} into \eqref{duh} with $t=\tau$, we get
\[\Psi^\tau(v_0)=e^{i\tau \Delta}v_0+\int_0^\tau e^{i(\tau-s)\Delta}B(e^{is\Delta}v_0)ds+e_1,\]
where
\[e_1=\int_0^\tau\int_0^s e^{i(\tau-s)\Delta}dB(e^{i(s-y)\Delta}v(y))[e^{i(s-y)\Delta}B(v(y))]dyds.\]
On the other hand, for the Lie splitting $\Phi^\tau(v_0)=\Phi_B^\tau\Phi_A^\tau(v_0)$,
applying the first-order Taylor expansion
\[\Phi_B^\tau(w)=w+\tau B(w)+\tau^2\int_0^1(1-s) dB(\Phi_B^{s\tau}(w))[B(\Phi_B^{s\tau}(w))]ds,\]
for $w=\Phi_A^\tau(v_0)=e^{i\tau \Delta}v_0$, we get
\[\Phi^\tau(v_0)=\Phi_B^\tau\Phi_A^\tau(v_0)=e^{i\tau\Delta}v_0+\tau B(e^{i\tau\Delta}v_0)+e_2,\]
with
\[e_2=\tau^2\int_0^1(1-s)dB(\Phi_B^{s\tau}(e^{i\tau\Delta}v_0))
[B(\Phi_B^{s\tau}(e^{i\tau\Delta}v_0))]ds.\]
Thus
\[\Psi^\tau(v_0)-\Phi^\tau(v_0)=e_1-e_2+e_3,\]
where
\[e_3=\int_0^\tau e^{i(\tau-s)\Delta}B(e^{is\Delta}v_0)ds-\tau B(e^{i\tau\Delta}v_0).\]
Noticing that $e_3$ is the quadrature error of the rectangle rule approximating the integral on $[0,\tau]$ of the function $g(s)=e^{i(\tau-s)\Delta}B(e^{is\Delta}v_0)$, this implies
\[e_3=-\tau^2\int_0^1 \theta g'(\theta \tau)d\theta,\]
where $g'(s)=-e^{i(\tau-s)\Delta}[A, B](e^{is\Delta}v_0)$, with
\begin{align*}
[A, B](w)&=dA(w)[Bw]-dB(w)[Aw]=i\Delta(Bw)-dB(w)[Aw]\\
%&=w_{xx}f_n^\ep(|w|^2)+2(f_n^\ep)'(|w|^2)(w_x^2\overline{w}+w|w_x|^2)+(f_n^\ep)'(|w|^2)
%(w_{xx}|w|^2+w^2\overline{w_{xx}}+2w|w_x|^2)\\
%&+w(f_n^\ep)''(|w|^2)(w_x\overline{w_x}+w\overline{w_x})^2-w_{xx}f_n^\ep(|w|^2)+2w (f_n^\ep)'(|w|^2)(w\overline{w_{xx}}-\overline{w}w_{xx})\\
&=(f_n^\ep)'(|w|^2)(2w_x^2\overline{w}+4w|w_x|^2+3w^2\overline{w_{xx}}-|w|^2w_{xx})\\
&\quad+w(f_n^\ep)''(|w|^2)(w_x\overline{w}+w\overline{w_x})^2,
\end{align*}
by recalling \eqref{dBdef} and
\be\label{dAdef}
dA(w_1)[w_2]=\lim\limits_{\delta\rightarrow 0}\frac{A(w_1+\delta w_2)-A(w_1)}{\delta}=i\Delta w_2.\ee
Applying \eqref{fd}, we get
\[\left|[A, B](w)\right|\le \fl{12n+6n^2}{\ep}|w_x|^2+12|w_{xx}|,\]
which implies
\begin{align*}
\|[A, B](w)\|_{L^2}&\le \fl{12n+6n^2}{\ep}\|w_x\|_{L^4}^2+12\|w_{xx}\|_{L^2}\\
&\le \fl{12n+6n^2}{\ep}\|w_x\|_{L^\infty}\|w_x\|_{L^2}+12\|w_{xx}\|_{L^2}\\
&\le 12\|w\|_{H^2}+\frac{12cn^2}{\ep}\|w\|_{H^2}^2,
\end{align*}
where we have used $n\ge 2$ and the Sobolev embedding $\|w\|_{L^\infty}\le c\|w\|_{H^1}$ for $d=1$. This yields that for any $s\in [0,1]$,
\[\|g'(s)\|_{L^2}=\|[A, B](e^{is\Delta} v_0)\|_{L^2}
\le 12\|v_0\|_{H^2}(1+cn^2\|v_0\|_{H^2}/\ep),\]
which immediately gives
\be\label{e3}
\|e_3\|_{L^2}\le \tau^2\int_0^1\|g'(\theta \tau)\|_{L^2}d\theta
\le 12\|v_0\|_{H^2}(1+cn^2\|v_0\|_{H^2}/\ep)\tau^2.\ee

Next we estimate $e_1$ and $e_2$. In view of \eqref{fd}, we have
\[\|dB(w_1)[w_2]\|_{L^2}\le (8+\ln(n\ep^{-2}))\|w_2\|_{L^2}, \]
when $\ep\le \widetilde{\ep}:=\sqrt{n}/\|w_1\|_{L^\infty}$.
Thus one gets
\begin{align*}
\|dB(e^{i(s-y)\Delta} v(y))[e^{i(s-y)\Delta} B(v(y))]\|_{L^2}&\le (8+\ln(n\ep^{-2}))\|e^{i(s-y)\Delta}B(v(y))\|_{L^2}\\
&=(8+\ln(n\ep^{-2}))\|B(v(y))\|_{L^2},
\end{align*}
when $\ep\le \ep_1=\sqrt{n}/\|e^{i(s-y)\Delta} v(y)\|_{L^\infty}$.
By Sobolev embedding,
\be\label{qq1}
\|e^{i(s-y)\Delta}v(y)\|_{L^\infty}\le c\|e^{i(s-y)\Delta}v(y)\|_{H^1}
=c\|\Psi^y(v_0)\|_{H^1},\ee
thus when $\ep\le \ep_2:=\fl{\sqrt{n}/c}{\max\limits_{y\in[0,\tau]}\|\Psi^y(v_0)\|_{H^1}}$, we have
\begin{align}
\|e_1\|_{L^2}&\le \int_0^\tau\int_0^s
\|dB(e^{i(s-y)\Delta} v(y))[e^{i(s-y)\Delta} B(v(y))]\|_{L^2}dyds\nn\\
&\le (8+\ln(n\ep^{-2}))\int_0^\tau\int_0^s
\|B(v(y))\|_{L^2}dyds\nn\\
&\le (8+\ln(n\ep^{-2}))\tau^2 \max\limits_{0\le y\le\tau}\|v(y)f_n^\ep(|v(y)|^2)\|_{L^2}\nn\\
&\le (8+\ln(n\ep^{-2}))^2\tau^2 \max\limits_{0\le y\le\tau}\|v(y)\|_{L^2}\nn\\
&= (8+\ln(n\ep^{-2}))^2\|v_0\|_{L^2}\tau^2.\label{e1}
\end{align}
Similarly, by recalling
\[\|\Phi_B^{s\tau}(e^{i\tau\Delta}v_0)\|_{L^\infty}=\|e^{i\tau\Delta}v_0\|_{L^\infty}
\le c\|v_0\|_{H^1},\]
when $\ep\le \ep_3:=\sqrt{n}/(c\|v_0\|_{H^1})$,
\begin{align}
\|e_2\|_{L^2}&\le (8+\ln(n\ep^{-2}))\tau^2\int_0^1\|B(\Phi_B^{s\tau}(e^{i\tau\Delta}v_0))\|_{L^2}ds\nn\\
&\le (8+\ln(n\ep^{-2}))^2\tau^2 \int_0^1\|\Phi_B^{s\tau}(e^{i\tau\Delta}v_0)\|_{L^2}ds\nn\\
&=(8+\ln(n\ep^{-2}))^2 \|v_0\|_{L^2}\tau^2.\label{e2}
\end{align}
Combining \eqref{e3}, \eqref{e1} and \eqref{e2}, when $\ep\le \widetilde{\ep}_0=\min\{\ep_2, \ep_3\}=\ep_2$, we have
\begin{align*}
\|\Psi^\tau(v_0)-\Phi^\tau(v_0)\|_{L^2}&\le \tau^2\|v_0\|_{H^2}\big[c_1
+c_2\ln(n\ep^{-2})+c_3(\ln(n\ep^{-2}))^2+\frac{12cn^2}{\ep}\|v_0\|_{H^2}\big]\\
&\le \tau^2\|v_0\|_{H^2}\big[\frac{c_1}{\ep}
+\frac{C_2n^{1/2}}{\ep}+\frac{12cn^2}{\ep}\|v_0\|_{H^2}\big]\\
&\le C(n, \|v_0\|_{H^2})\frac{\tau^2}{\ep},
\end{align*}
where we have employed the inequalities $\ln(x)\le Cx^{1/2}$ and $\ln(x)\le Cx^{1/4}$ for $x\in [1, \infty)$. Hence \eqref{local2} is established.

Similarly the stability can be yielded by \eqref{phibl}:
\be\label{stab1}
\|\Phi^\tau(v)-\Phi^\tau(w)\|_{L^2}\le (1+4n\tau)\|\Phi_A^\tau(v-w)\|_{L^2}=
(1+4n\tau)\|v-w\|_{L^2},
\ee
for $v, w\in L^2(\Omega)$. Denote
$\ep_0=\frac{\sqrt{n}/c}{\|u\|_{L^\infty([0, T]; H^1)}}$, then by applying similar arguments in the proof of Theorem \ref{thmlt},  we can get the error estimate \eqref{li2}.
\end{proof}

\bigskip

\begin{remark}
For $d=2, 3$, the error estimate \eqref{li2} can be established with $\ep_0$ depending on $n$, $\tau_0$ and
$\|u^\ep\|_{L^\infty([0,T]; H^2(\Omega))}$ by noticing that $H^2(\Omega)\hookrightarrow L^\infty(\Omega)$ and $H^2(\Omega)\hookrightarrow W^{1,4}(\Omega)$ for $d=2, 3$.
\end{remark}

\bigskip

\begin{remark}[Strang splitting]
\label{rem:ST_Error}
When considering a Strang splitting,
    \be
     \label{ST}
u^{\ep,k+1}= \Phi_B^{\tau/2}\left(\Phi_A^\tau
    \left(\Phi_B^{\tau/2}(u^{\ep,k})\right)\right),\ \
\mathrm{or}\ \ u^{\ep,k+1}= \Phi_A^{\tau/2}\left(\Phi_B^\tau
    \left(\Phi_A^{\tau/2}(u^{\ep,k})\right)\right),
  \ee
by applying similar but more intricate arguments as above, we can prove
the error bound
\[\|u^{\ep,k}-u^\ep(t_k)\|_{L^2}\le C\left(n, \tau_0, T,
  \|u^\ep\|_{L^\infty([0,T];H^4(\Omega))}\right)\,\frac{\tau^2}{\ep^3},\]
under the assumption that $u^\ep\in  L^\infty([0,T];H^4(\Omega))$.
\end{remark}

\begin{remark}
In view of Theorem~\ref{theo:cauchy}, Theorems~\ref{thmlt} and \ref{thmlt2} rely on a regularity that we
  know is available. On the other hand, the regularity assumed in the above
  remark on Strang splitting is unclear in general, in the sense that
  we don't know how to bound $u^\ep$ in $ L^\infty([0,T];H^4(\Omega))$.
  \end{remark}

\section{Numerical results}\label{sec:num}
In this section, we first test the convergence rate of the local energy regularized model
\eqref{ERLSE} and compare it with the  other two \eqref{RLSE0} and \eqref{RLSE1}.
We then test the order of accuracy of the regularized Lie-Trotter splitting (LTSP) schemes \eqref{LT} and  \eqref{LT1} and
Strang splitting (STSP) scheme \eqref{ST}.  To simplify the
presentation, we unify the regularized models  \eqref{RLSE0},
\eqref{RLSE1}  and \eqref{ERLSE}  as follows:
\be
\label{RLSE_Unified}
\left\{
\begin{aligned}
&i\p_t u^\ep(\bx,t)+\Delta u^\ep(\bx,t)=\lambda
u^\ep(\bx,t)f_{\rm reg}^\ep(|u^\ep(\bx,t)|^2),\quad \bx\in \Omega, \quad
t>0,\\
&u^\ep(\bx,0)=u_0(\bx),\quad \bx\in \overline{\Omega}.
\end{aligned}
\right.
\ee
%And the corresponding regularized energy  $E_n^\ep(u^\ep)$, $\widehat{E}^\ep(u^\ep)$ and $\widetilde{E}^\ep(u^\ep)$ are unified as $E_{\rm reg}^{\ep}(u^\ep)$.
With the regularized nonlinearity $f_{\rm reg}^\ep(\rho)$ being chosen as  $\widetilde{f}^\ep$,
$\widehat{f}^\ep$ and $f_n^\ep$,   \eqref{RLSE_Unified}
 corresponds to the regularized models \eqref{RLSE0}, \eqref{RLSE1}  and \eqref{ERLSE}, respectively.
In practical computation, we impose periodic boundary condition on $\Og$ and employ the  standard
Fourier pseudo-spectral method \cite{bao2002,bao2003,bao2018}  for  spatial discretization.
The details are omitted here for brevity.

%Combining the Lie-Trotter splitting and the  Strang splitting  method for time discretization, we obtain the Lie-Trotter
%splitting and the  Strang splitting pseudo-spectral methods.

Hereafter, unless specified, we consider the following  Gaussian initial data in $d$-dimension ($d=1,2$), i.e., $u_0(\bx)$ is chosen as
\be
u_0(\bx)=b_d\, e^{i\bx\cdot\bm{v} +\frac{\lambda}{2}|\bx|^2}, \qquad \bx\in {\mathbb R}^d.
\ee
In this case, the LogSE \eqref{LSE}  admits
the moving Gausson solution
\be
\label{Gausson}
u(\bx,t)=b_d\, e^{i(\bx\cdot\bm{v}-(a_d+|\bm{v}|^2)t)+\fl{\lambda}{2}|\bx-2\bm{v}t|^2},\qquad \bx\in {\mathbb R}^d, \quad t\ge0,
\ee
with $a_d=-\lambda\, (d-\ln|b_d|^2).$
In this paper, we let $\lambda=-1$, $b_d=1/\sqrt[4]{-\lambda\pi}$ and choose   $\Omega=[-16, 16]^d$. Moreover, we fix $v=1$ and $\bm{v}=(1, 1)^T$ as well as take the  mesh size  as $h=1/64$  and $h_x=h_y=1/16$ for $d=1$ and $2$, respectively. To quantify the numerical errors, we define the following error  functions:
\be
\label{Neror}
\begin{split}
&\breve{e}^{\ep}_\rho(t_k):=\rho(\cdot,t_k)-\rho^{\ep}(\cdot,t_k)=|u(\cdot, t_k)|^2-|u^\ep(\cdot, t_k)|^2, \\
&\breve{e}^{\ep}(t_k):=u(\cdot,t_k)-u^{\ep}(\cdot,t_k), \qquad  \Breve{\Breve{e}}^{\ep}(t_k):=u(\cdot, t_k)-u^{\ep,k}, \\
& e^{\ep}(t_k):=u^{\ep}(\cdot, t_k)-u^{\ep,k}, \qquad\;\;\;\; e_{E}^{\ep}:=|E(u_0)-E_{\rm reg}^{\ep}(u_0)|.
\end{split}
\ee
Here, $u$ and $u^\ep$ are the exact solutions of the LogSE \eqref{LSE} and RLogSE \eqref{RLSE_Unified}, respectively, while
$u^{\ep, k}$ is the numerical solution of the RLogSE \eqref{RLSE_Unified} obtained by LTSP \eqref{LT} (or \eqref{LT1}) or STSP \eqref{ST}.
The ``exact'' solution $u^{\ep}$  is obtained numerically by  STSP \eqref{ST} with a very small time step, e.g., $\tau=10^{-5}$.  %and a very fine mesh size, e.g., $h=1/64$ for $d=1$ and $h_x=h_y=1/32$ for $d=2$.
The energy is obtained by the trapezoidal rule for approximating the integrals in the energy  \eqref{conserv}, \eqref{RegL_Energ}, \eqref{Energ1} and \eqref{Energ2}.

\subsection{Convergence rate of the regularized model}
 Here, we consider the error between the solutions of the RLogSE \eqref{RLSE_Unified}  and the LogSE \eqref{LSE}.
 For various regularized models (i.e., different choices of regularized nonlinearity $f_{\rm reg}^\ep$ in equation \eqref{RLSE_Unified}), 
 Fig. \ref{fig:ModeL_Conv_Rate}  shows $\|\breve{e}^{\ep}(t)\|_{H^1}$
 % $\|\breve{e}^{\ep}(3)\|$,  $\|\breve{e}^{\ep}(3)\|_{\infty}$
 and $\|\breve{e}_\rho^{\ep}(t)\|_1$ at $t=3$ and  $t=2$, respectively, for $d=1$ and $2$,
while Fig. \ref{fig:ModeL_Conv_Rate_Total_Energy} depicts $e_{E}^{\ep}$ versus $\ep$.
The results are similar when $\breve{e}^{\ep}(t)$ is measured by $L^2$- or $L^\infty$-norm.

\begin{figure}[htbp!]
\begin{center}
\includegraphics[width=6cm,height=4cm]{./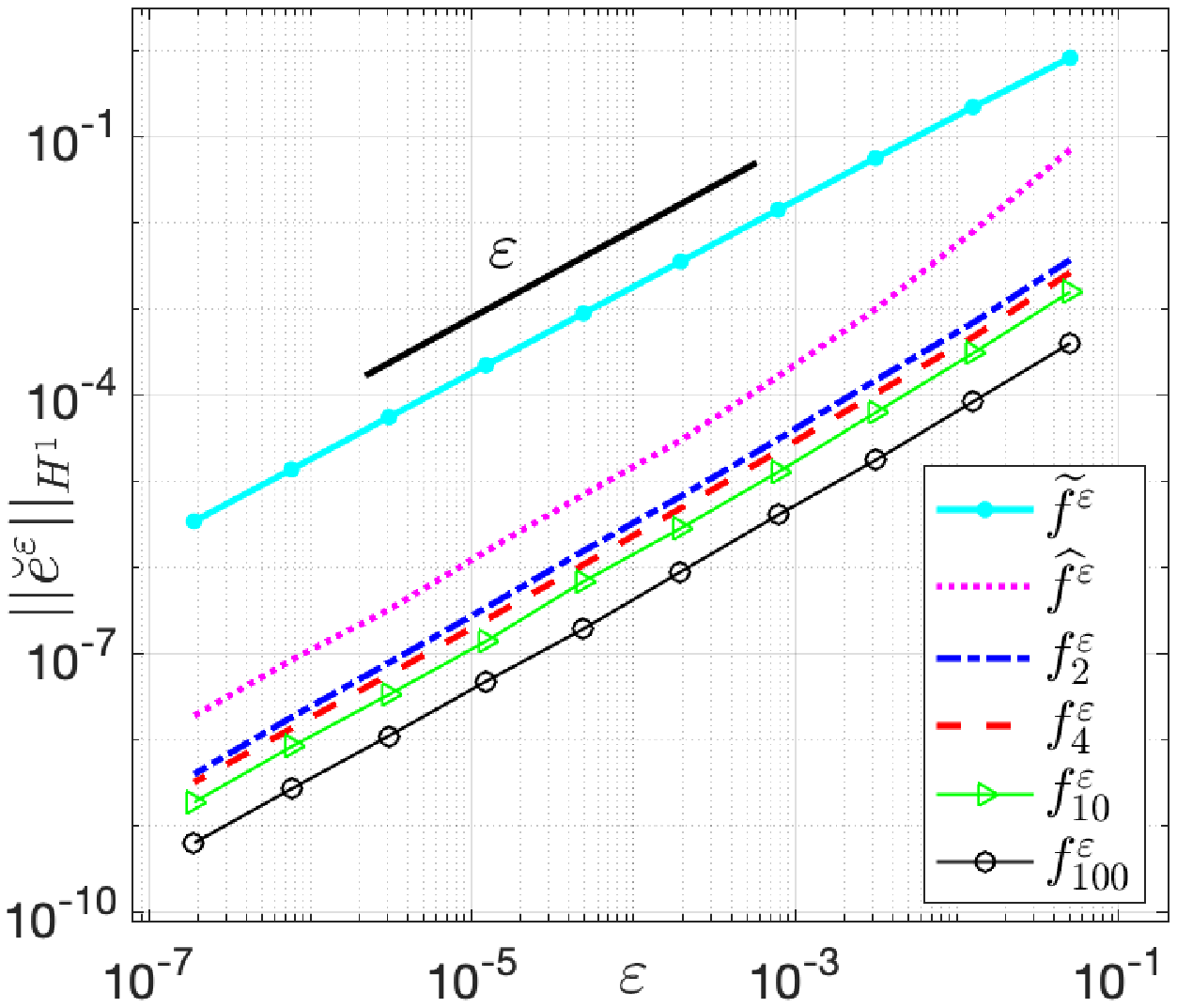}
\quad
%\includegraphics[width=6cm,height=4cm]{./figs/ModeL_Conv/ModeL_Converg_Rate_ALL_ReguL_FunN_L2_norm_At_t3.eps}\\[1em]
%\includegraphics[width=6cm,height=4cm]{./figs/ModeL_Conv/ModeL_Converg_Rate_ALL_ReguL_FunN_Max_norm_At_t3.eps}
%\quad
\includegraphics[width=6cm,height=4cm]{./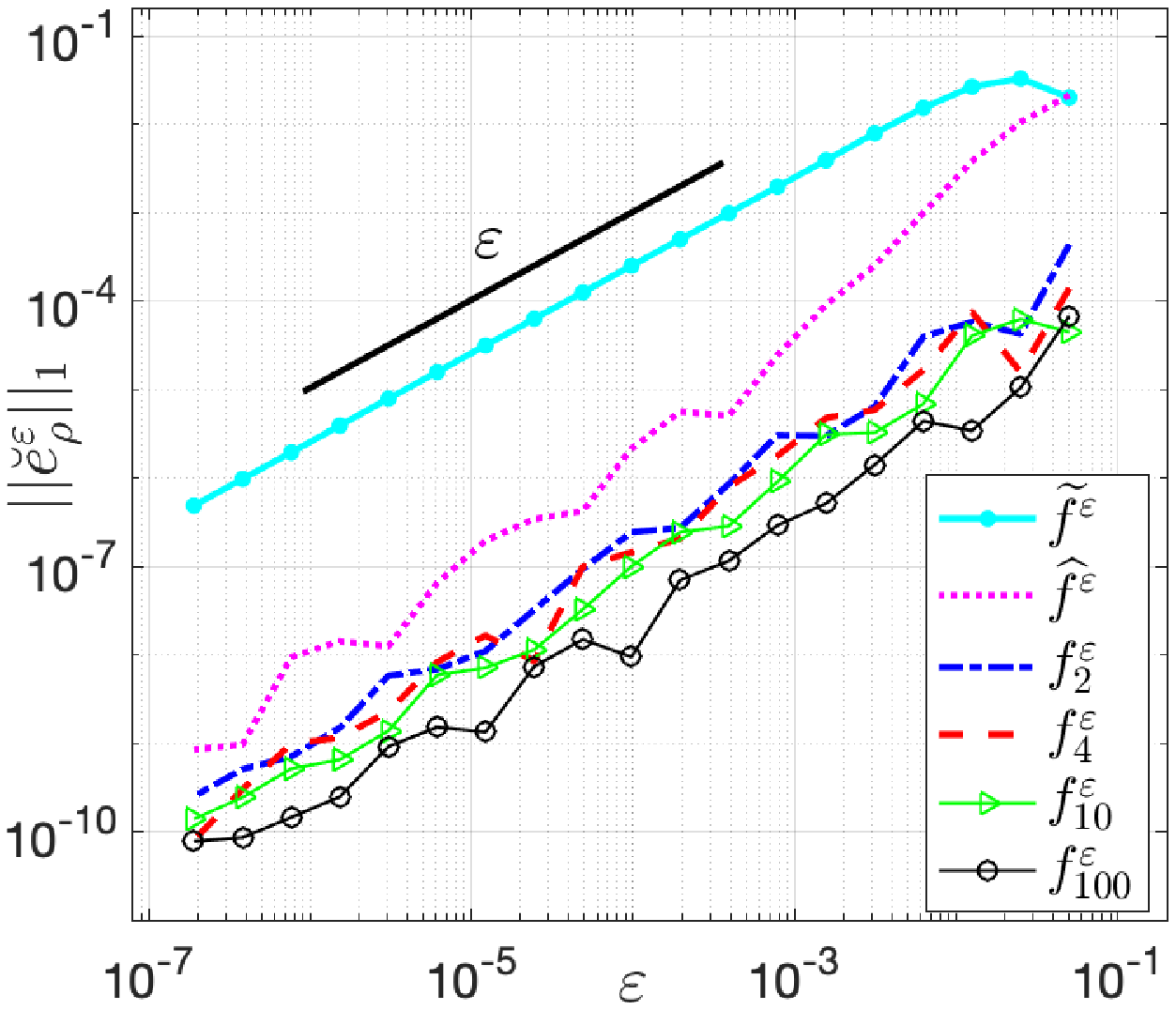}\\[1em]
\includegraphics[width=6cm,height=4cm]{./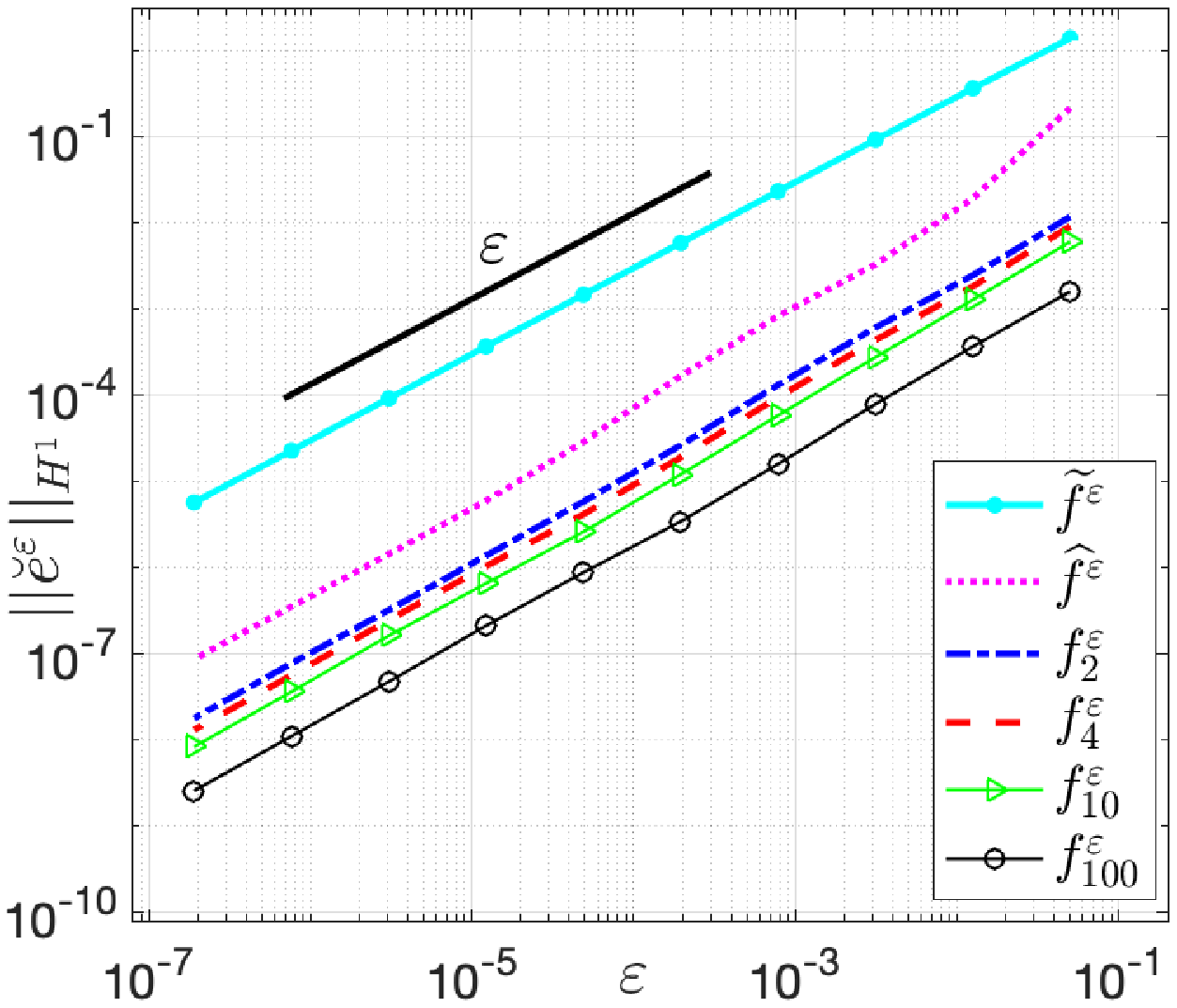}
\quad
\includegraphics[width=6cm,height=4cm]{./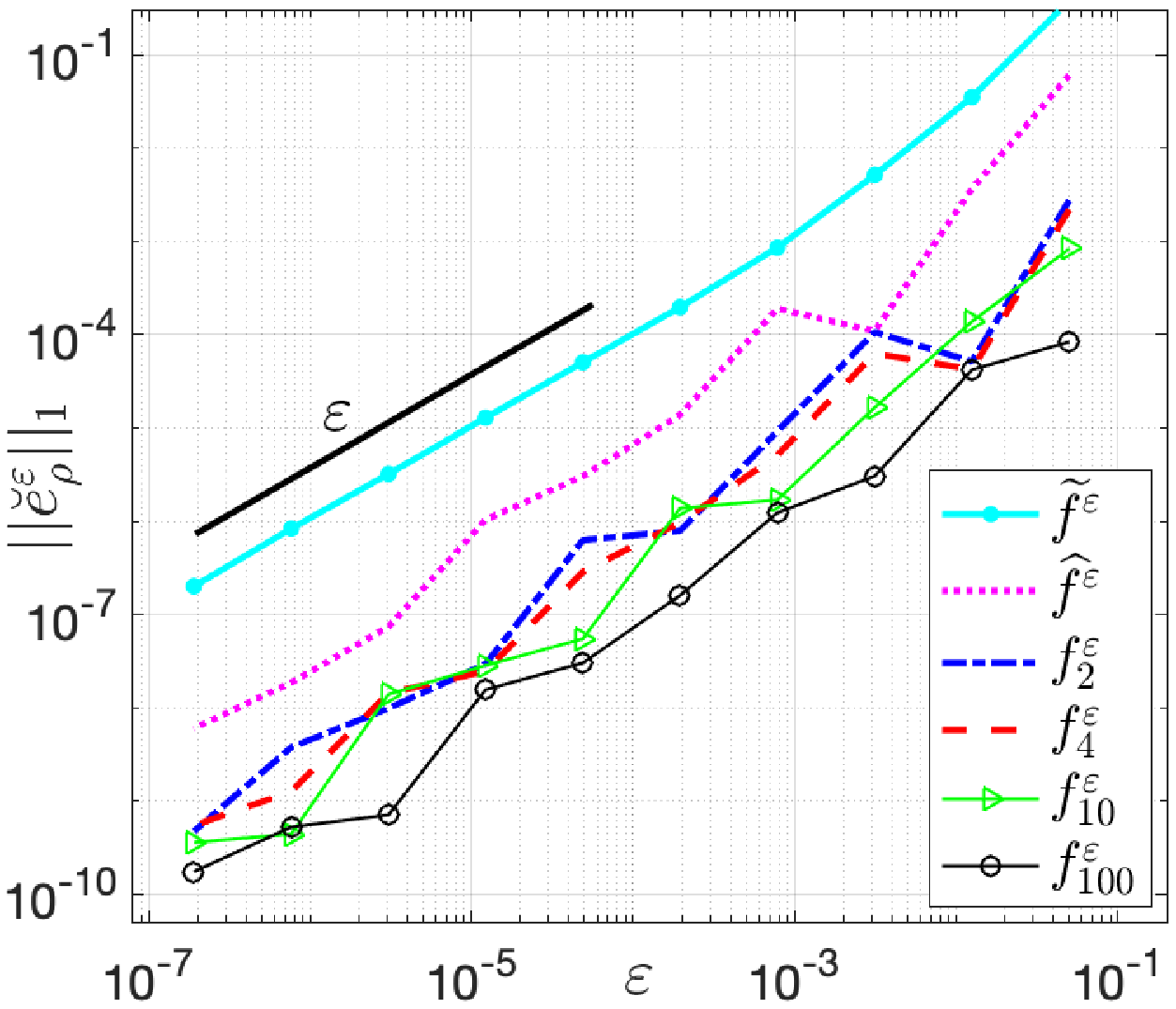}
\end{center}
 \caption{Convergence of the RLogSE \eqref{RLSE_Unified} with various regularized  nonlinearities $f_{\rm reg}^\ep$ to
 the LogSE \eqref{LSE}, i.e., the error $\|\breve{e}^\varepsilon(t)\|_{H^1}$ and $\|\breve{e}^\varepsilon_\rho(t)\|_1$ versus the regularization parameter $\ep$ at $t=3$ for $d=1$ (upper)  and $t=2$ for $d=2$ (lower). }
\label{fig:ModeL_Conv_Rate}
\end{figure}

From these figures and additional
similar numerical results not shown here for brevity, we could clearly see:
(i) The solution of the RLogSE \eqref{RLSE_Unified}  converges linearly to that of the LogSE \eqref{LSE} in terms of $\ep$ for all the three types of regularized models.
Moreover, the regularized energy $\widetilde{E}^\ep$ converges linearly to the original energy $E$ in terms of
$\ep$, while $\widehat{E}^\ep$ \& $E_n^\ep$ (for any $n\ge 2$) converges quadratically. These results confirm the theoretical results from  Section \ref{sec:cvmodel} \& \ref{sec:cvenergy}.
(ii) In $L^1$-norm, the density $\rho^\ep$ of the solution of the RLogSE with regularized nonlinearity $\widetilde{f}^\ep$  converges linearly to that of the LogSE \eqref{LSE} in terms of $\ep$, while the convergence rate is not clear for those of RLogSE with other
regularized nonlinearities.  Generally, for fixed $\ep$, the errors of the densities measured in $L^1$-norms are smaller than  those of wave functions (measured in $L^2$, $H^1$ or $L^\infty$-norm).
(iii) For any fixed $\ep>0$, the proposed local energy regularization (i.e., $f_{\rm reg}^\ep=f_n^\ep$) outperforms the  other two
(i.e., $f_{\rm reg}^\ep=\widehat{f}^\ep$ and $f_{\rm reg}^\ep=\widetilde{f}^\ep$) in the sense that its corresponding errors in wave function and total energy
are smaller.  The larger the order (i.e., $n$) of the  energy-regularization is chosen, the smaller the difference between the solutions of the ERLogSE \eqref{ERLSE}
 and LogSE is obtained.

\begin{figure}[h!]
\begin{center}
\includegraphics[width=6cm,height=5cm]{./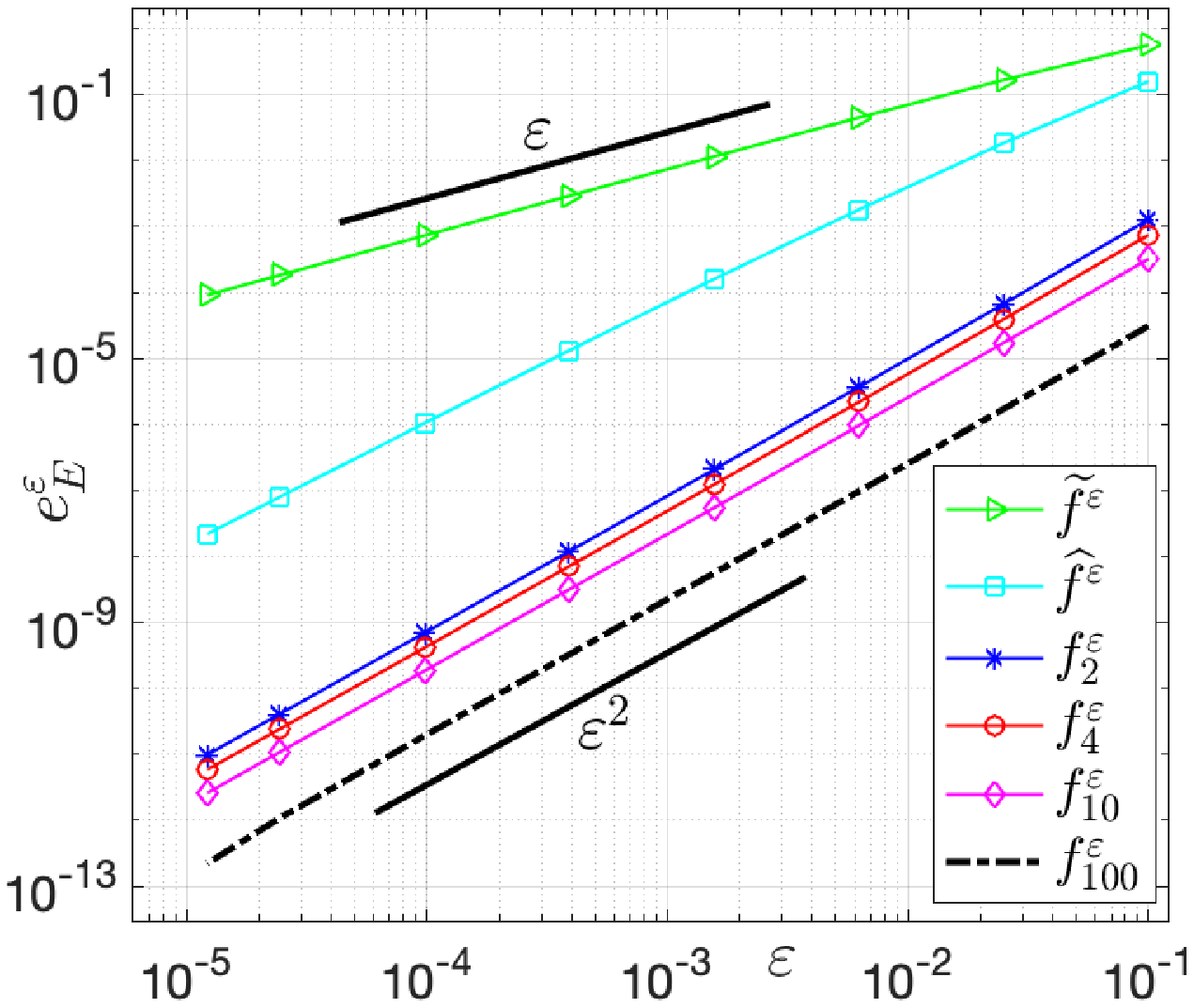}\quad
\includegraphics[width=6cm,height=5cm]{./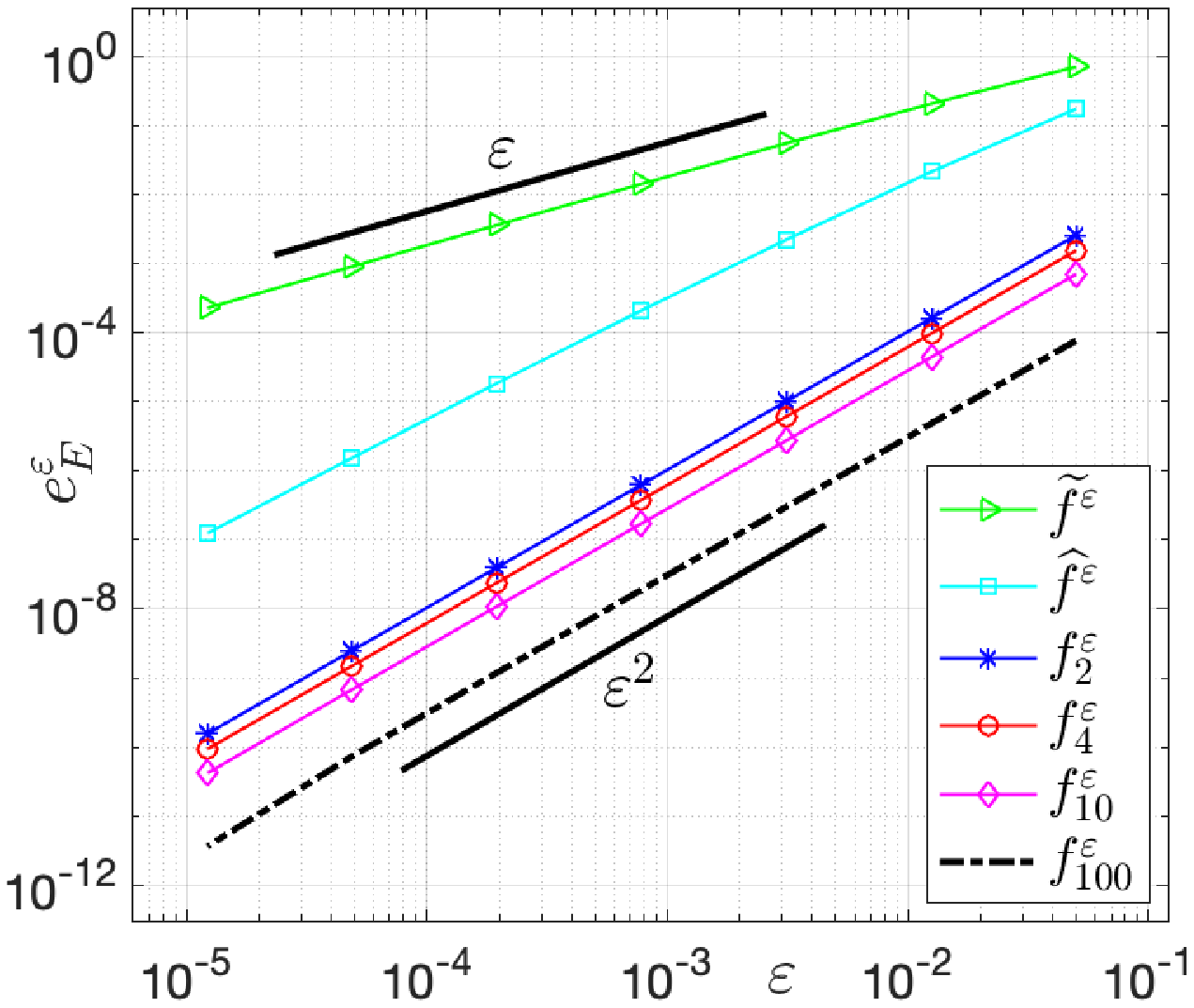}
\end{center}
 \caption{Convergence of the RLogSE \eqref{RLSE_Unified} with various regularized nonlinearities $f_{\rm reg}^\ep$
    to the LogSE \eqref{LSE}: the energy error
 $e_{E}^\ep(t)$ \eqref{Neror} at $t=3$ for $d=1$ (left)  and $t=2$ for $d=2$ (right).}
\label{fig:ModeL_Conv_Rate_Total_Energy}
\end{figure}

\subsection{Convergence rate of the time-splitting spectral method}
Here, we investigate the model RLogSE \eqref{RLSE_Unified} with  $f_{\rm reg}^\ep=f_n^\ep$, i.e., the ERLogSE \eqref{ERLSE}.
We will test the convergence rate of type-1 LTSP \eqref{LT} \& type-2 LTSP \eqref{LT1} and the STSP \eqref{ST} to the ERLogSE \eqref{ERLSE}
%with different regularization order  $n$
or the LogSE \eqref{LSE} in terms of the time step $\tau$ for fixed $\ep\in(0,1)$.
Fig. \ref{fig:Order_Accuracy_LT_ST} shows the errors $\|e^\varepsilon(3)\|_{H^1}$
versus time step $\tau$ for  $f_2^\ep$ \& $f_4^\ep$. In addition, Table \ref{tab:conv_STSP_Energy_ReguL_f2}
displays $\|\breve{\breve{e}}^{\ep}(3)\|$ versus  $\ep$ \& $\tau$ for  $f_2^\ep$.

\begin{figure}[h!]
\begin{center}
\includegraphics[width=6cm,height=5cm]{./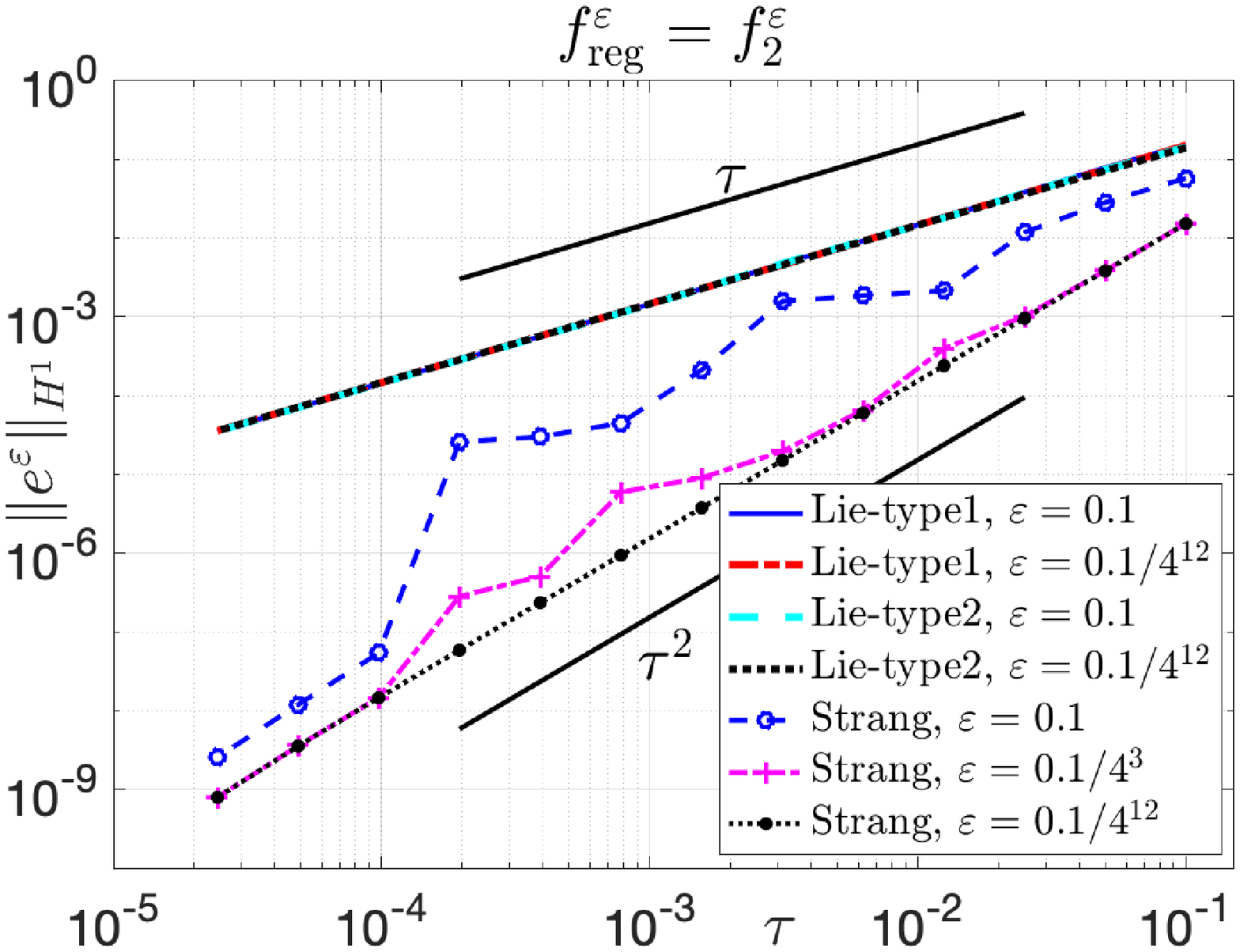}
\quad
\includegraphics[width=6cm,height=5cm]{./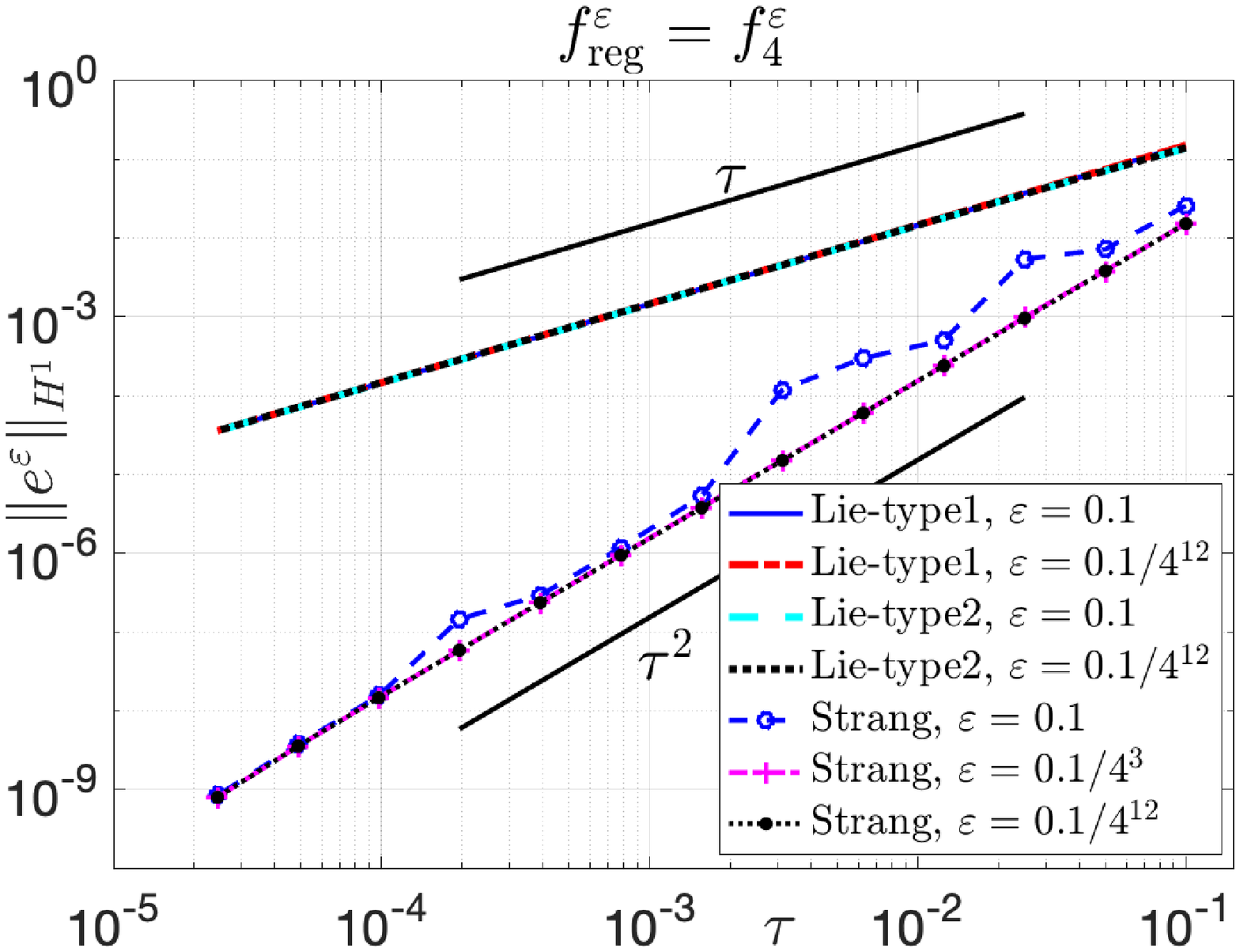}
\end{center}
 \caption{Convergence of the type-1 LTSP \eqref{LT} \&  type-2  LTSP \eqref{LT1}  as well as the STSP \eqref{ST} to the ERLogSE \eqref{ERLSE} with  regularized nonlinearity $f_2^\ep$ (left)
and $f_4^\ep$ (right), i.e., errors $\|e^\varepsilon(3)\|_{H_1}$  versus $\tau$ for various $\ep$.}
\label{fig:Order_Accuracy_LT_ST}
\end{figure}

 From Fig. \ref{fig:Order_Accuracy_LT_ST}, Table \ref{tab:conv_STSP_Energy_ReguL_f2} and additional
 similar results not shown here for brevity, we can observe that:
 (i) In $H^1$  norm, for any fixed $\ep\in(0, 1)$ and $n\ge 2$, the LTSP scheme converges linearly while the STSP scheme converges quadratically when $\ep<\ep_0$ for some $\ep_0>0$. (ii)  For  any  $f_n^\ep$ with $n\ge2$, the STSP converges quadratically to the LogSE \eqref{LSE}  only when $\ep$ is sufficiently small, i.e., $\ep\lesssim \tau^2$ (cf. each row in the lower triangle below the diagonal in bold letter in Table \ref{tab:conv_STSP_Energy_ReguL_f2}).
(iii) When  $\tau$  is sufficiently small, i.e., $\tau^2\lesssim \ep$,  the ERLogSE  \eqref{ERLSE}  converges linearly  at $O(\ep)$ to
 the LogSE \eqref{LSE} (cf.  each column in the upper triangle above the diagonal in bold letter in Table
  \ref{tab:conv_STSP_Energy_ReguL_f2}).
 (iv) The numerical results are similar for other $f_n^\ep$ with $n\ge3$ and when the errors are measured  in  $L^\infty$- and $L^2$-norm,
  which confirm the theoretical conclusion in Theorem \ref{thmlt2} and Remark \ref{rem:ST_Error}.

\begin{table}[htbp!]
\footnotesize
\tabcolsep 0pt
\caption{Convergence of the STSP \eqref{ST} (via solving the ERLogSE \eqref{ERLSE} with $f_2^\ep$) to the LogSE \eqref{LSE},
i.e., $\|\breve{\breve{e}}^{\ep}(3)\|$ for different $\ep$ and $\tau$. }
\label{tab:conv_STSP_Energy_ReguL_f2}
\begin{center}\vspace{-0.5em}
\def\temptablewidth{1\textwidth}
{\rule{\temptablewidth}{1pt}}
\begin{tabularx}{\temptablewidth}{@{\extracolsep{\fill}}p{1.38cm}|cccccccccc}
 & $\tau=0.1$  & $\tau/2$    & $\tau/2^2$   & $\tau/2^3$  & $\tau/2^4$  & $\tau/2^5$ &  $\tau/2^6$ &  $\tau/2^7$ &  $\tau/2^8$ &  $\tau/2^{9}$  \\[0.3em]
\hline
$\ep$=0.025 &7.98E-3 & \bf{2.13E-3 }& 8.86E-4 & 7.28E-4 & 7.14E-4 & 7.12E-4 & 7.12E-4 & 7.12E-4 & 7.12E-4 & 7.12E-4 \\ [0.25em]
rate 	       &    --   & \bf{1.91 }& 1.27 & 0.28 & 0.03 & 0.00 & 0.00 & 0.00 & 0.00 & 0.00     \\  [0.25em]
\hline
$\ep/4$ &      7.77E-3 &  1.96E-3 & \bf{5.02E-4 }& 1.67E-4 & 1.12E-4 & 1.08E-4 & 1.08E-4 & 1.08E-4 & 1.08E-4 & 1.08E-4   \\  [0.25em]
rate	   &   --    &1.99 & \bf{1.97} & 1.59 & 0.57 & 0.06 & 0.01 & 0.00 & 0.00 & 0.00            \\[0.25em]
\hline
$\ep/4^2$ &  7.76E-3 & 1.95E-3 & 4.88E-4 & \bf{1.25E-4 }& 3.81E-5 & 2.40E-5 & 2.28E-5 & 2.27E-5 & 2.27E-5 & 2.27E-5 	  \\  [0.25em]
rate	   &  --		    &   2.00    & 2.00        & \bf{1.97} & 1.71 & 0.67 & 0.07 & 0.01 & 0.00 & 0.00              \\[0.25em]
\hline
$\ep/4^3$ &   7.76E-3  & 1.95E-3 & 4.87E-4 & 1.22E-4 & \bf{3.08E-5 }& 8.95E-6 & 5.09E-6 & 4.74E-6 & 4.72E-6 & 4.71E-6 \\  [0.25em]
rate	   &     --  &   2.00 & 2.00 & 2.00 & \bf{1.98} & 1.78 & 0.82 & 0.10 & 0.01 & 0.00            \\[0.25em]
\hline
$\ep/4^4$ &7.76E-3 & 1.95E-3 & 4.87E-4 & 1.22E-4 & 3.04E-5 & \bf{7.66E-6} & 2.092E-6 & 9.93E-7 & 8.80E-7 & 8.72E-7\\  [0.25em]
rate	   & -- &2.00 &  2.00 & 2.00 & 2.00 & \bf{1.99} & 1.87 & 1.08 & 0.18 & 0.01 \\[0.25em]
\hline
$\ep/4^5$ &7.76E-3  & 1.95E-3   & 4.87E-4 & 1.22E-4 & 3.04E-5 & 7.61E-6 & \bf{1.92E-6} & 5.26E-7 & 2.54E-7 & 2.27E-7\\  [0.25em]
rate	   & -- &2.00 & 2.00 & 2.00 & 2.00 & 2.00 & \bf{1.99} & 1.87 & 1.05 & 0.16 \\[0.25em]
\hline
$\ep/4^6$ & 7.76E-3  & 1.95E-3   & 4.87E-4 & 1.22E-4 & 3.04E-5 & 7.61E-6 & 1.90E-6 & \bf{4.78E-7}  &   1.27E-7   &  5.36E-8  \\  [0.25em]
rate	   & -- &2.00 & 2.00 & 2.00 & 2.00 & 2.00 & 2.00 &\bf{ 1.99} & 1.91 & 1.25  \\[0.25em]
\hline
$\ep/4^7$ &  7.76E-3  & 1.95E-3   & 4.87E-4 & 1.22E-4 & 3.04E-5 & 7.61E-6 & 1.90E-6 & 4.76E-7  &   \bf{1.19E-7}   &  3.13E-8   \\  [0.25em]
rate	   & -- &2.00 & 2.00 & 2.00 & 2.00 & 2.00 & 2.00 & 2.00 & \bf{2.00 }& 1.93    \\[0.25em]
\hline
$\ep/4^8$ &  7.76E-3  & 1.95E-3   & 4.87E-4 & 1.22E-4 & 3.04E-5 & 7.61E-6 & 1.90E-6 & 4.76E-7  &   1.19E-7   &  \bf{2.98E-8}   \\  [0.25em]
rate	   &  -- &2.00 & 2.00 & 2.00 & 2.00 & 2.00 & 2.00 & 2.00 & 2.00 & \bf{2.00 }   \\[0.25em]
\end{tabularx}
{\rule{\temptablewidth}{1pt}}
\end{center}
\end{table}

\subsection{Application for  interaction of 2D Gaussons}
In this section, we apply the STSP method to investigate
the interaction of Gaussons in dimension 2. To this end, we fix  $n=4$, $\varepsilon=10^{-12}$,  $\tau=0.001$, $h_x=h_y=1/16$,  $\Omega=[-16, 16]^2$ for {\bf Case 1} \& {\bf Case 2} while $\Omega=[-48, 48]^2$ for {\bf Case 3}.
The initial data is chosen as
\be
u_0(\bx)=b_1 e^{i\bx\cdot\bm{v}_1 +\frac{\lambda}{2}|\bx-\bx_1^0|^2}
+b_2 e^{i\bx\cdot\bm{v}_2 +\frac{\lambda}{2}|\bx-\bx_2^0|^2},
\ee
where $b_j$, $\bm{v}_j$ and $\bx_j^0$ ($j=1,2$)  are real   constant vectors, i.e., the initial data is the sum of two Gaussons \eqref{Gausson} with velocity $\bm{v}_j$ and initial location  $\bx_j^0$. Here, we consider the following   cases:
\begin{itemize}
\item[(i)] $b_1=b_2=\fl{1}{\sqrt[4]{\pi}}$,\quad   $\bm{v}_1=\bm{v_2}=(0, 0)^T$,\quad  $\bx_1^0=-\bx_2^0=(-2, 0)^T$;
\smallskip
\item[(ii)] $b_1= 1.5\,b_2 =\fl{1}{\sqrt[4]{\pi}}$,\quad   $\bm{v}_1=(-0.15, 0)^T$,\quad $\bm{v_2}=\bx_1^0=(0, 0)^T$,\quad $\bx_2^0=(5, 0)^T$;
\smallskip
\item[(iii)] $b_1=b_2 =\fl{1}{\sqrt[4]{\pi}}$,\quad   $\bm{v}_1=(0, 0)^T$,\quad $\bm{v_2}=(0, 0.85)^T$,\quad $\bx_1^0=-\bx_2^0=(-2, 0)^T$.
\end{itemize}
Fig. \ref{fig:Rev_2D_Gau_Inter_Case1_2} shows the contour  plots of $|u^\varepsilon(x,y,t)|^2$ at different time as well as the evolution of $\sqrt{|u^\varepsilon(x,0,t)|}$ for {\bf Case} (i) \& (ii). While Fig. \ref{fig:Rev_2D_Gau_Inter_Case3}  illustrates that for {\bf Case} (iii).
 From these figures we clearly see that: (1) Even for two static
 Gaussons, if they stay close enough, they will contact and undergo
 attractive interactions. They will collide and stick together shortly
 then separate again. The Gaussons will swing like a pendulum and
 small solitary waves are emitted outward during the interaction
 (cf. Fig.~\ref{fig:Rev_2D_Gau_Inter_Case1_2} top). This dynamics phenomena is similar to that in 1D case \cite{bao2018}. (2) For Case (ii), the two Gaussons also undergo  attractive interactions. The slowly moving Gausson  will drag  its  nearby static Gausson to move in the same direction (cf. Fig.~\ref{fig:Rev_2D_Gau_Inter_Case1_2} bottom), which is also similar to that in 1D case \cite{bao2018}.  (3) For two Gaussons (one static and the other moving) staying close enough, if the moving Gausson move perpendicular to the line connecting the two Gaussons, the static Gausson will be dragged to move and the direction of the moving Gausson will be altered.  The two Gaussons will rotate with each other and gradually drift away, which is similar to the dynamics of a vortex pair in the cubic Schr\"odinger equation \cite{Bao2014}.

\begin{figure}[h!]
\begin{center}
\includegraphics[width=3cm,height=2.5cm]{./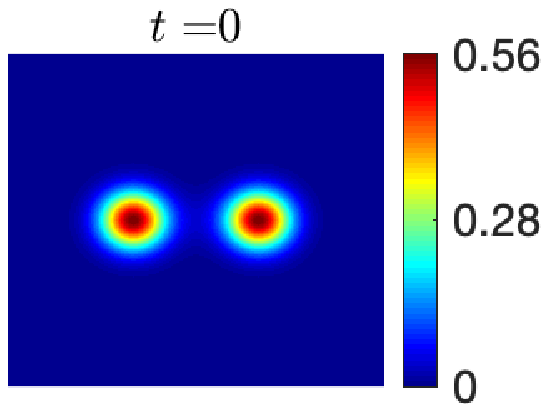}
\includegraphics[width=3cm,height=2.5cm]{./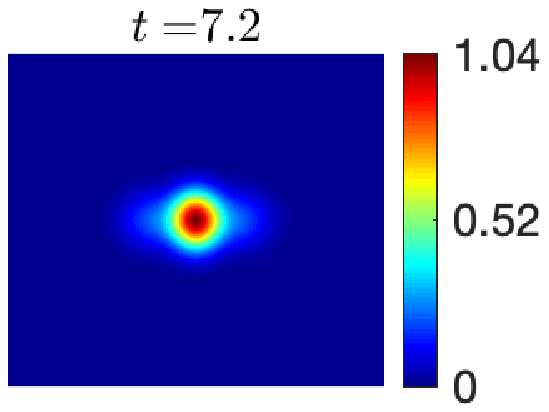}
\includegraphics[width=3cm,height=2.5cm]{./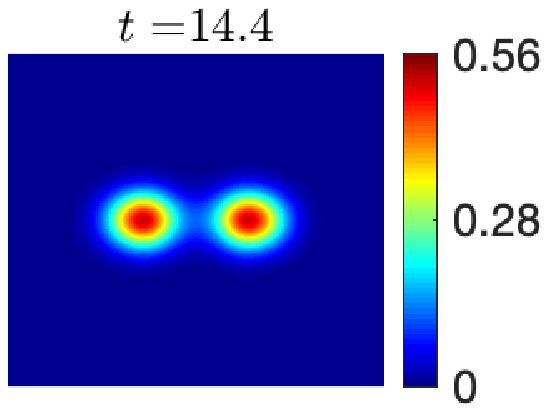}
\includegraphics[width=3cm,height=2.5cm]{./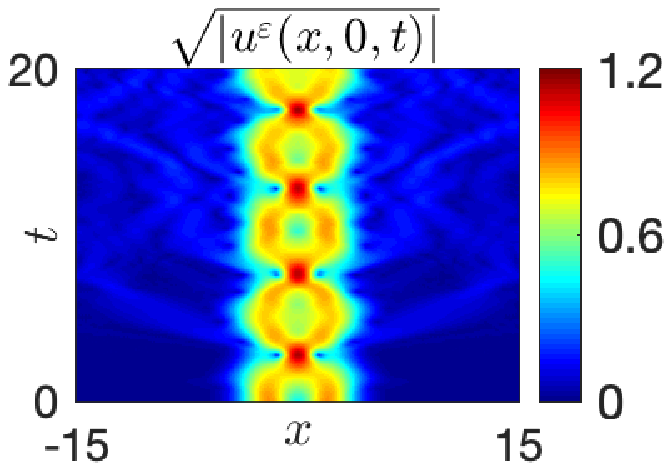}\\[1em]
\includegraphics[width=3cm,height=2.5cm]{./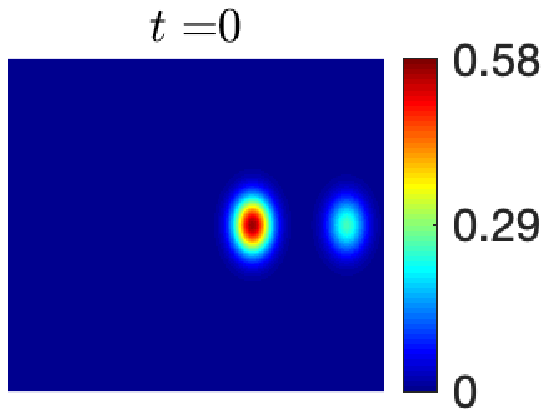}
\includegraphics[width=3cm,height=2.5cm]{./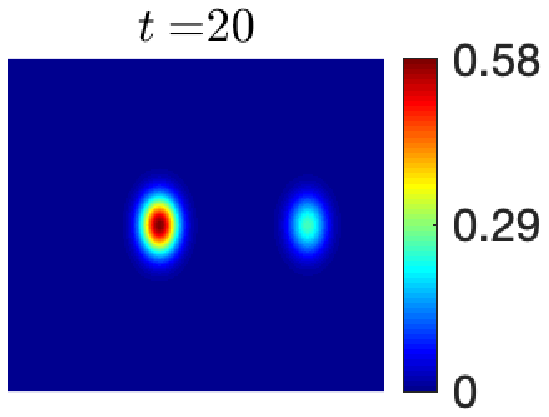}
\includegraphics[width=3cm,height=2.5cm]{./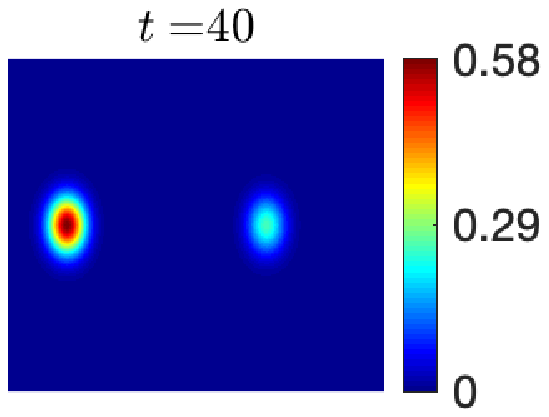}
\includegraphics[width=3cm,height=2.5cm]{./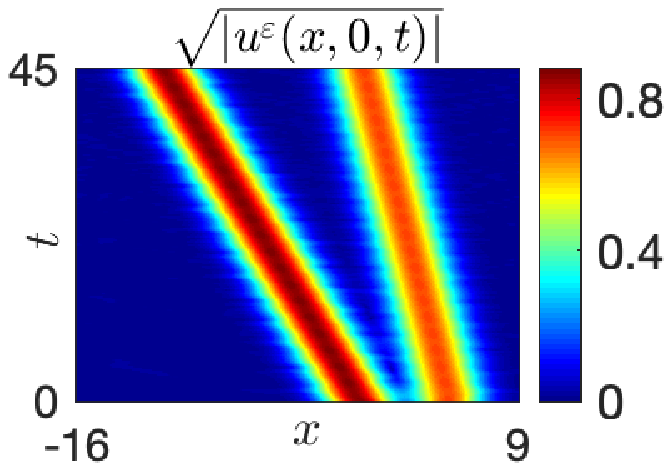}\\
\end{center}
 \caption{Plots of $|u^\varepsilon(x,y,t)|^2$ at different times (first three column) and contour plot of  $|u^\varepsilon(x,0,t)|^2$ (last column)
for {\bf Case} (i) (Upper) in region $[-6, 6]^2$  and {\bf Case} (ii) (Lower) in  region $[-13, 7]\times[-6, 6]$.}
 \label{fig:Rev_2D_Gau_Inter_Case1_2}
\end{figure}

\begin{figure}[h!]
\begin{center}
\includegraphics[width=3cm,height=2.5cm]{./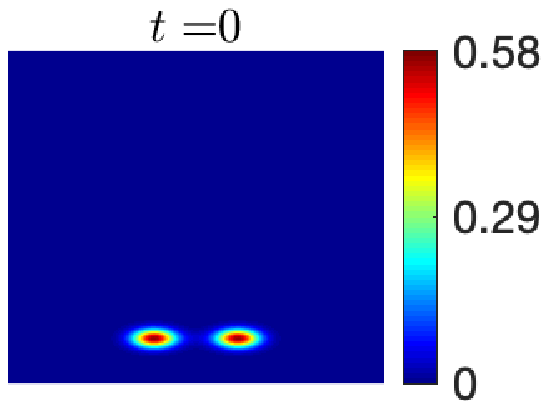}
\includegraphics[width=3cm,height=2.5cm]{./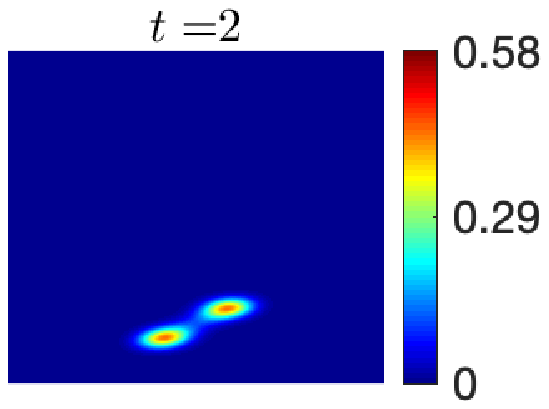}
\includegraphics[width=3cm,height=2.5cm]{./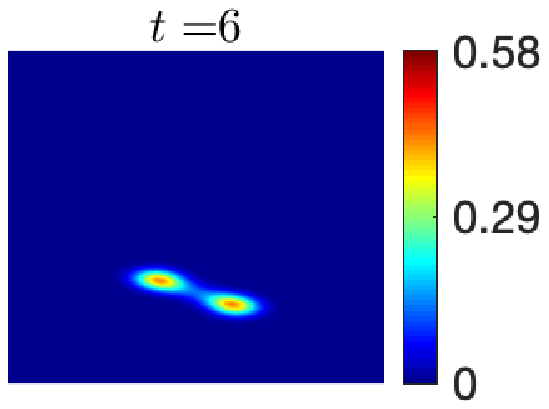}
\includegraphics[width=3cm,height=2.5cm]{./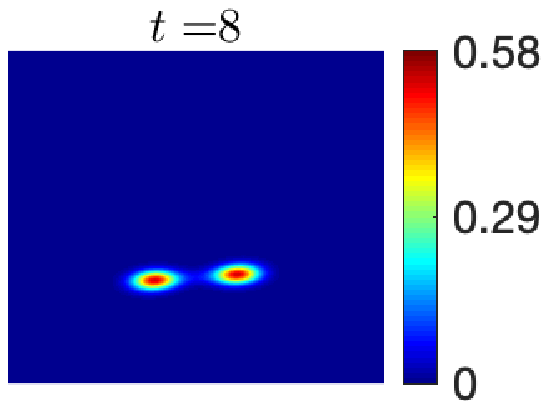}\\[1em]
\includegraphics[width=3cm,height=2.5cm]{./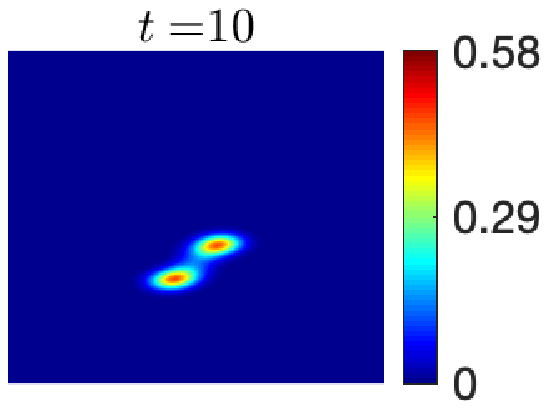}
\includegraphics[width=3cm,height=2.5cm]{./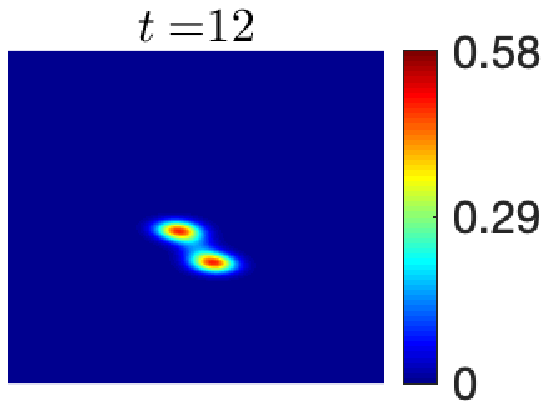}
\includegraphics[width=3cm,height=2.5cm]{./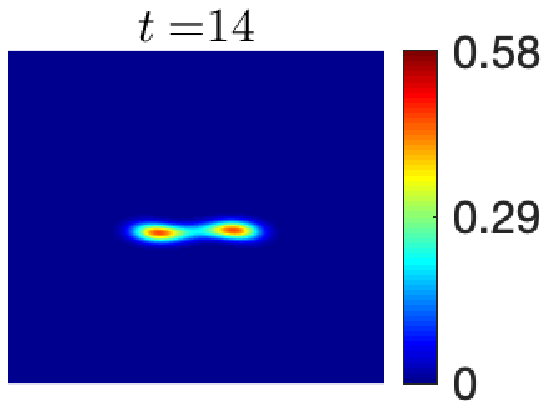}
\includegraphics[width=3cm,height=2.5cm]{./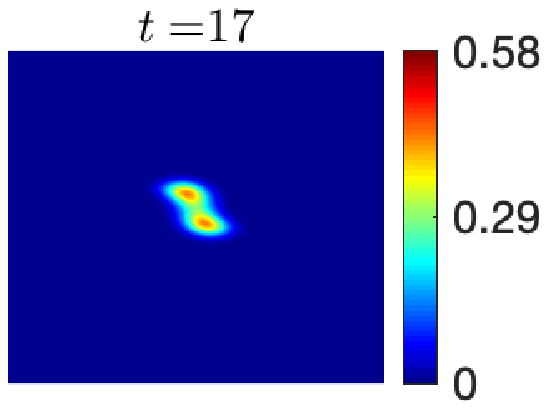}\\[1em]
\includegraphics[width=3cm,height=2.5cm]{./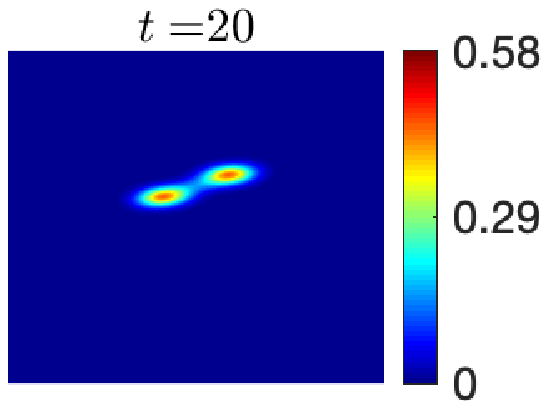}
\includegraphics[width=3cm,height=2.5cm]{./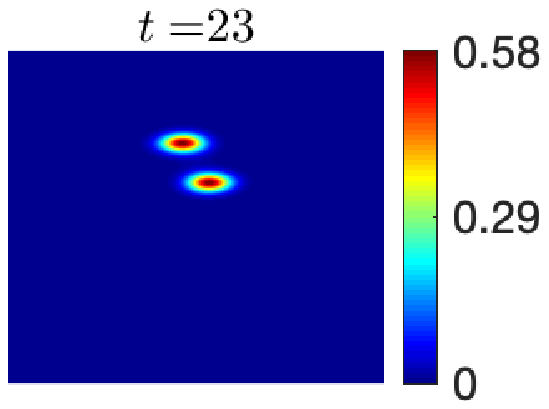}
\includegraphics[width=3cm,height=2.5cm]{./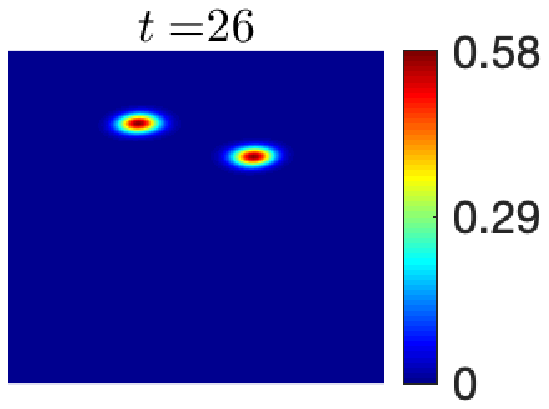}
\includegraphics[width=3cm,height=2.5cm]{./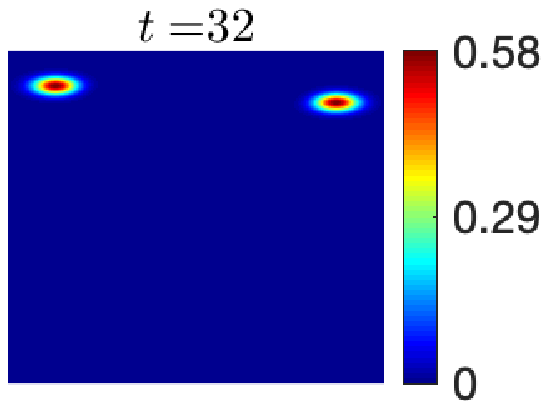}\\[1em]
\end{center}
 \caption{Plots of $|u^\varepsilon(x,y,t)|^2$ at different times for {\bf Case} (iii) in region $[-9, 9]\times [-5, 32]$.}
  \label{fig:Rev_2D_Gau_Inter_Case3}
\end{figure}

\section{Conclusion}
We proposed  a new systematic local energy regularization (LER) approach  to overcome the singularity of the nonlinearity in the logarithmic Schr\"{o}dinger equation (LogSE).
With a small regularized parameter $0<\ep\ll1$, in contrast to the existing ones that directly regularize the logarithmic  nonlinearity,
we regularized locally the interaction energy
density in the energy functional of the LogSE. The Hamiltonian flow of the new regularized energy
then yields an energy regularized logarithmic Schr\"{o}dinger equation
(ERLogSE). Linear and quadratic convergence in
terms of $\ep$ was established between
the solutions, and between the conserved total energy  of  ERLogSE and LogSE, respectively.
Then we presented and analyzed  time-splitting schemes to solve the  ERLogSE. The classical first order  of convergence  was obtained  both theoretically and numerically for the Lie-Trotter splitting scheme.
Numerical results suggest that the error bounds of splitting schemes to the LogSE clearly depend on the time step $\tau$
and mesh size $h$ as well as the small regularized parameter $\ep$.  Our numerical results confirm the error bounds and
indicate that the ERLogSE model outperforms the other existing ones in accuracy.

\section*{Acknowledgment}
This work was partially supported by the Ministry
of Education of Singapore grant R-146-000-296-112
(MOE2019-T2-1-063) (W. Bao), Rennes M\'etropole through its AIS
program (R. Carles), the Alexander von Humboldt
Foundation (C. Su), the Institutional Research Fund from Sichuan University (No. 2020SCUNL110) and the National Natural Science Foundation of China (No. 11971335) (Q. Tang).

%=============================================================================
%                                                                          REFERENCES
% =============================================================================
%\section*{References}

\end{document}